 \def\epsy{\varepsilon}
\def\Prob{{\sf P}}
\def\Expect{{\sf E}}
\def\limsup{\mathop{\rm lim{\,}sup}}
\def\liminf{\mathop{\rm lim{\,}inf}}
\def\sq{\hbox{\rlap{$\sqcap$}$\sqcup$}}
\def\qed{\ifmmode\sq\else{\unskip\nobreak\hfil
\penalty50\hskip1em\null\nobreak\hfil\sq
\parfillskip=0pt\finalhyphendemerits=0\endgraf}\fi\medskip}
 \def\FRAC#1#2#3{\genfrac{}{}{}{#1}{#2}{#3}}
\def\ddtp{{\mathchoice{\FRAC{1}{d^{\hbox to 2pt{\rm\tiny +\hss}}}{dt}}%
{\FRAC{1}{d^{\hbox to 2pt{\rm\tiny +\hss}}}{dt}}%
{\FRAC{3}{d^{\hbox to 2pt{\rm\tiny +\hss}}}{dt}}%
{\FRAC{3}{d^{\hbox to 2pt{\rm\tiny +\hss}}}{dt}}}}
\def\half{{\mathchoice{\FRAC{1}{1}{2}}%
{\FRAC{1}{1}{2}}%
{\FRAC{3}{1}{2}}%
{\FRAC{3}{1}{2}}}}
\def\eqdef{\mathbin{:=}}
\newtheorem{theorem}{Theorem}
\newtheorem{proposition}{Proposition}
\newtheorem{lemma}{Lemma}
\newtheorem{assumption}{Assumption}
\def\Lemma#1{Lemma~\ref{t:#1}}
\def\Proposition#1{Proposition~\ref{t:#1}}
\def\Theorem#1{Theorem~\ref{t:#1}}
\def\Assumption#1{Assumption~\ref{as:#1}}
\def\Section#1{Section~\ref{#1}}
\def\Figure#1{Figure~\ref{f:#1}}
\def\Appendix#1{Appendix~\ref{apx:#1}}
\def\eq#1/{(\ref{e:#1})}
\newcommand{\field}[1]{\mathbb{#1}}
\def\ind{\field{I}}
\def\Re{\field{R}}
\newcounter{rmnum}
\newcounter{anum}
\newcounter{cmnum}
\newenvironment{compactnumerate}{\begin{list}{{\upshape \arabic{cmnum}.}}{\usecounter{cmnum}
\setlength{\leftmargin}{0pt} \setlength{\rightmargin}{0pt}
\setlength{\itemindent}{27pt} }}{\end{list}}
\def\til={{\widetilde =}}
\def\clP{{\cal P}}
\def\bfmq{{\mbox{\protect\boldmath$q$}}}
\def\bfmz{{\mbox{\protect\boldmath$z$}}}
\def\bfmY{{\mbox{\protect\boldmath$Y$}}}
\def\bfmZ{{\mbox{\protect\boldmath$Z$}}}
\newlength{\noteWidth}
\long\def\notes#1{\ifinner
             {\tiny #1}
             \else
             \marginpar{\parbox[t]{\noteWidth}{\raggedright\tiny #1}}
             \fi}
\def\notes#1{}
\def\Var{\hbox{\sf Var}\,}
\def\pori{\pi}
\def\pip{p}
\def\mu{q}
\def\Ho{H0}
\def\Ha{H1}
\def\Kset{U_m}
\def\bmu{\bar{\mu}}
\def\sameorder{\asymp}
\def\mP{{\mathcal{Q}_n}}
\def\bmP{{\clP_m}}
\def\cJ{I}
\def\Figure#1{Fig.~\ref{#1}}
\def\eqdef{: =}
\def\obeta{\overline{\beta}}
\def\mub{\gamma}
\def\bphi{\mathbf{\phi}}
\def\kstat{S_n^{*}}
\def\tstat{S_n^{\sf W}}
\def\pearson{S_n^{\sf P}}
\def\pstata{S_n^{\sf P0}}
\def\kstata{S_n^{*+}}
\def\kstattilde{\tilde{S}_n^{*}}
\def\kstatatilde{\tilde{S}_n^{*+}}
\def\ptest{\phi^{{\sf  P}}}
\def\ttest{\phi^{{\sf  W}}}
\def\ktest{\phi^{{*}}}  
\def\ktesta{\phi^{{*+}}}
\def\ptesta{\phi^{{\sf  P0}}}
\def\lambdab{\lambda_0}
\def\Rec{\operatorname{Re}}
\renewcommand\Im{\operatorname{Im}}
\def\taul{\bar{\kappa}}
\def\bmPmub{\mathcal{P}^b_m}
\def\supseta{\mathcal{W}_\eta}
\def\prt{\mathbf{A}}
\def\ctau{\check{\tau}}
\def\kinf{\kappa}
\def\sq{\hbox{\rlap{$\sqcap$}$\sqcup$}}
\def\qed{\ifmmode\sq\else{\unskip\nobreak\hfil
\penalty50\hskip1em\null\nobreak\hfil\sq
\parfillskip=0pt\finalhyphendemerits=0\endgraf}\fi\medskip}
\def\smu{\bfmq}
\def\dtv{d_{TV}}
\begin{document}

\title{Generalized Error Exponents \\For Small Sample Universal Hypothesis Testing}
\author{Dayu Huang,~\IEEEmembership{Member,~IEEE} and  Sean
Meyn,~\IEEEmembership{Fellow,~IEEE}\thanks{Dayu Huang conducted this research while a PhD candidate with the Department of Electrical and
Computer Engineering and the Coordinated Science Laboratory,
University of Illinois at Urbana-Champaign, Urbana, IL. Email: dayuhuang@gmail.com.

Sean Meyn is with the Department of Electrical  and Computer Engineering, University of Florida,  Gainesville, FL 32611. Email: meyn@ece.ufl.edu.

Portions of the results presented here were published in abridged form in \cite{huamey12p3261} and \cite{huamey12p344}.
 
}

}
\maketitle



\begin{abstract}
The small sample universal hypothesis testing problem
is investigated in this paper, 
in which   the number of samples $n$ is smaller than the number of possible outcomes $m$. The goal of this work is to find an appropriate criterion to analyze statistical tests in this setting. A suitable model for analysis is the high-dimensional model in which both $n$ and $m$ increase to infinity,
and $n=o(m)$. A new performance criterion based on large deviations analysis is proposed and it generalizes the classical error exponent applicable for large sample problems (in which $m=O(n)$). This generalized error exponent criterion provides insights that are not available from asymptotic
consistency or central limit theorem analysis. The following results are established for the uniform null distribution:

(i) The best achievable probability of error $P_e$ decays as $P_e=\exp\{-(n^2/m) J (1+o(1))\}$ for some $J>0$.  

(ii) A class of tests based on separable statistics, including the coincidence-based test, attains the optimal generalized error exponents. 

(iii) Pearson's chi-square test has a zero generalized error exponent and thus  its probability of error is asymptotically larger than the optimal test.

\end{abstract}

\begin{IEEEkeywords}
Bahadur efficiency, Chernoff efficiency, error exponent, hypothesis testing, large alphabet, large deviations, 
separable statistic, small sample.
\end{IEEEkeywords}



\section{Introduction}\label{sec:intro}

    

The goal of this paper is to better understand hypothesis testing problems with large but finite observation alphabet.  A motivating example is the following hypothesis testing problem on a \textit{continuous} state space.

Consider a hypothesis testing problem in which an i.i.d.\ sequence $\bfmY_1^n=\{Y_1, \ldots, Y_n\}$ is observed,  with $Y_i \in [0,1]$. There are two hypotheses: Under the null hypothesis $\Ho$, the probability measure induced by $Y_i$ is denoted by $P$. Under the alternative hypothesis $\Ha$, 
it is only known that the probability measure $Q$ induced by $Y_i$ satisfies $Q \in \mathcal{Q}$. 
All of these probability measures are assumed to be  absolutely continuous with respect to the Lebesgue measure on $[0,1]$, and each $Q$ is absolutely continuous with respect to $P$. 
The goal is to design a test $\bphi: [0,1]^n \rightarrow \{0,1\}$ with small probabilities of false alarm and missed detection:
\[P_F\eqdef \Prob_P\{\phi_n(\bfmY_1^n)=1\}, P_M\eqdef \sup_{Q \in \mathcal{Q}}\Prob_Q\{\phi_n(\bfmY_1^n)=0\}.\]


We consider a universal hypothesis testing problem, also called goodness of fit, in which the set  $\mathcal{Q}$ takes the following form,
\[
\mathcal{Q}=\{Q: d(Q, P) \geq \epsy\}
\]
where $d$ is a distance function that could change with $n$, and $\epsy>0$. As discussed in \cite{bar89p107}, if the distance function is the total variation distance or any distance function dominating the total variation distance, then there is no test that is asymptotically consistent: i.e. $P_F \rightarrow 0$ and $P_M \rightarrow 0$ as $n \rightarrow \infty$. On the other hand, there is a consistent test if the distance function is the total variation distance defined on a \emph{finite partition} of $[0,1]$: 
 Let 
\begin{equation}\label{e:defprt}
\prt=\{\mathcal{A}_1, \ldots, \mathcal{A}_m\}
\end{equation}
be a partition of $[0,1]$. The total variation distance defined on this partition is given by 
\begin{equation}\label{e:partitiondistance}
d_\prt(Q,P)=\sup_{\mathcal{A} \subset \prt}\{|Q(A)-P(A)|\}.
\end{equation}

As the number of observations $n$ increases, it is desirable for a test to not only have a decreasing probability of error, but also be effective against an increasingly larger alternative set $\mathcal{Q}$. Therefore, we consider a sequence of distance functions defined with increasingly finer partitions. We restrict ourselves to partitions of which the cells have equal probabilities under $P$:
\begin{equation} \label{e:equalPcell}
P(A_j)=1/m \textrm{ for } 1\leq j \leq m.
\end{equation}
One reason to consider uniform cells, as argued in \cite{manwal42p306}, is that the total-variation distance based on this partition gives the best possible distinguishability with respect to the Kolmogorov-Smirnov distance: Consider the maximum Kolmogorov-Smirnov distance between the null distribution and any alternative distribution that has zero partition-based total variation distance to the null distribution. Then among any partitions with the same number of cells, the maximum Kolmogorov-Smirnov distance is minimized by the partition with uniform cells. In other words, this partition minimizes $\sup_{Q: d_\prt(Q,P)=0}d_{KS}(Q,P)$ where $d_{KS}$ is the Kolmogorov-Smirnov distance.

The dependence between $n$ and $m$ plays a significant role on test analysis and synthesis: the \emph{small sample} case in which $n/m\rightarrow 0$ has a different nature than the \emph{large sample} case in which $n/m \rightarrow \infty$. In the large sample case, the number of samples per cell increases to infinity, and thus eventually the underlying probability that $Y_i$ falls in each cell of $\prt$ can be estimated. This does not hold for the small sample case in which $m$ increases faster than $n$. The goal of this paper is to find an appropriate analysis criterion for the small sample problem.

\subsection{Related work}
This brief literature review focuses on modes of analysis in prior work, and the asymptotic settings considered. Many of the papers cited address models more general than \eqref{e:equalPcell}.

Examples of tests that can be applied to the problem considered in this paper include Pearson's chi-square test, Generalized Likelihood Ratio Test (GLRT) and the coincidence-based test proposed in \citep{pan08p4750}. The procedure to apply these tests is described in \Section{sec:application2continous}.

Existing results differ in the asymptotic setting considered, which can be roughly classified into three cases: 1) $m$ is fixed; 2) $m$ is increasing and $m=O(n)$; 3) $n=o(m)$ and $m=o(n^2)$. There is no need to consider the case $n=O(\sqrt{m})$ because the converse result (lower-bounds on probability of error) established in  \citep{pan08p4750} indicates that no asymptotically consistent test exists if $n=O(\sqrt{m})$. 

There are three predominant types of analysis: 
\begin{enumerate}
\item Asymptotic consistency / sample complexity analysis: This type of analysis characterizes how fast $m$ can increase with $n$, while still ensuring that $\limsup_{n \rightarrow \infty} P_F<\delta, \limsup_{n \rightarrow \infty} P_M < \delta$ for any small $\delta \geq 0$. 

Finer results on $P_F$ and $P_M$ are obtained in  Central Limit Theorem (CLT) and large deviations analysis.

\item CLT analysis: CLTs are applied to obtain asymptotic approximations of the distributions of the test statistic under both hypotheses. It is usually assumed that $\epsy \rightarrow 0$ as a function of $n$, i.e., the set of alternative distributions becomes closer to the null distribution as $n$ increases. This ensures that the decision boundary of the test is close to both the null distribution and the alternative distributions, so that the probabilities of false alarm and missed detection can be analyzed using CLTs. Under this choice of $\epsy$,  $P_F$ and $P_M$ usually converge to nonzero values.  The results characterize how the limits of $P_F$ and $P_M$ differ for different tests.

\item Large deviations analysis: The normalized limits (or asymptotic expansions) of $\log(P_F(\phi))$ and $\log(P_M(\phi))$ are studied. The distance $\epsy>0$ is held to be a constant. The proper normalization of $\log(P_F(\phi))$ and $\log(P_M(\phi))$ must first be identified, and then the normalized limits are calculated.
\end{enumerate}

The outcomes of CLT and large deviations analysis discussed above are asymptotic limits of probability of error given a specified increasing sequence of number of samples. The performance of two tests can also be compared using the number of samples required to achieve certain probability of error for a pair of null distribution and alternative distribution. Different requirement on the asymptotic behavior of $P_F$ and $P_{M}$ or varying the alternative distribution leads to  different measures of efficiency proposed in Pitman \cite{pit49}, Chernoff \cite{che52p493}, Hodges and Lehmann \cite{hodleh56p324}, and Bahadur \cite{bah60p276}. Methods for calculating the Pitman efficiency using CLT analysis  and calculating Chernoff,  Bahadur, Hodges and Lehmann efficiency using large deviations analysis are summarized in \cite{das08,ser80}. For the large sample case where $m=O(n)$, the connection between the error exponent and Bahadur efficiency is studied in \cite{quirob85p727, harvaj08p321}. For the small sample case $n=o(m)$, the generalized error exponent proposed has a similar connection, which is discussed in \Section{sec:bahadur}.

Consider the case where $m$ is fixed. 
\begin{enumerate}[a)]
\item Pearson's chi-square and GLRT statistics are asymptotically distributed as a chi-square distribution whose degree of freedom is $m-1$. These results and their extensions can be found in \citep{wil38p60, wal43p426, che54p573, bil61, hal83p1028, clabar90p453}. 
\item The performance of Pearson's chi-square test and GLRT is analyzed in \citep{hoe65p369} using the large deviations analysis. The following \emph{error exponent} criterion is used to evaluate a test $\bphi$:
\begin{equation}\label{e:defexponentchern}
\begin{aligned}
\cJ_F(\bphi) & \eqdef -\limsup_{n \rightarrow \infty} \frac{1}{n}\log(P_F(\phi_n)),\\
 \cJ_M(\bphi) &\eqdef -\limsup_{n \rightarrow \infty} \frac{1}{n}\log(P_M(\phi_n)).
\end{aligned}
\end{equation}
The GLRT is shown to have \emph{optimal} error exponents while Pearson's chi-square test does not. Our use of the term error exponent follows \citep{csilon71p181}. 

\end{enumerate}

Next consider the case $m=O(n)$. 
\begin{enumerate}[a)]
\item Pearson's chi-square test and GLRT are both asymptotically consistent (For example, see \citep{erm98p589}).
\item Pearson's chi-square statistic and the GLRT statistic both have asymptotically normal distributions. These results and their extensions can be found in  \citep{tum56p117, ste57p237, hol72p137, mor75p165, quirob84pp794, oos85p115,kru01p69}. 

\item A lower-bound on the best achievable probability of error in CLT analysis is given in \citep{erm98p589}: Under the condition $0 \!<\! \liminf_{n \rightarrow \infty} \!\frac{\epsy}{\sqrt{m}} \!\leq\! \limsup_{n \rightarrow \infty} \!\frac{\epsy}{\sqrt{m}} \!< \!\infty$, Pearson's chi-square test is asymptotically optimal. That is, for any test whose limit of $P_F$ is no larger than that of Pearson's chi-square test, the limit of its $P_M$ is asymptotically no smaller than that of Pearson's chi-square test. This result applies to the range of $m$ satisfying $m=o(n^2)$.
\item An achievability result (a lower-bound on the error exponent) and a complementing converse result (an upper-bound on the error exponent) in the large deviations analysis have been obtained in \citep{bar89p107}: There exists a test for which $P_F$ and $P_M$ both decay \emph{exponentially} fast with respect to $n$, i.e., $I_F$ and $I_M$ defined in \eqref{e:defexponentchern} are both nonzero, if and only if $m=O(n)$. 
Other large deviations and \emph{moderate deviations} analyses of GLRT and Pearson's chi-square test can be found in \citep{quirob85p727, tus77p385,kal85p1554,ron84p800, kol05p255, sirmirism89p645}
\end{enumerate}

Finally consider the small sample case where $n=o(m)$ and $m=o(n^2)$.
\begin{enumerate}[a)]
\item Pearson's chi-square test is known to be asymptotically consistent \citep{erm98p589}. Two others tests shown to be asymptotically consistent  are the test based on counting pairwise-collisions \citep{golron00ECCC} and the {coincidence-based test} \citep{pan08p4750}. An approach to extend tests designed for uniform cells \eqref{e:equalPcell} to non-uniform cells has been proposed in \cite{batfisforkumrubwhi01p442}. 
\item Results on the asymptotic distribution of Pearson's chi-square statistic and the GLRT statistic have been obtained in \citep{med77p1, med78p607}.
\end{enumerate}
To the best of our knowledge, the proper normalization for the large deviations analysis  has not been identified before in the small sample case.\footnote{Combining the upper-bounds on probability of error given in \citep{pan08p4750, batfisforkumrubwhi01p442} with the Chernoff inequality gives a loose upper-bound on the asymptotic probability error and does not yield the proper normalization. } We note that the classical error exponent  is not suitable. 
\subsection{Our contributions}
In this paper, we consider the specific problem where the partition is chosen as \eqref{e:equalPcell} so that the induced null distribution over the cells is uniform. As discussed before, this choice of partition minimizes the radius of the pre-image of the induced null distribution, measured by the Kolmogorov-Smirnov distance. 

The new large deviations framework proposed here is motivated by and analogous to the classical error exponent \eqref{e:defexponentchern} in the large sample case. While the classical error exponent is defined with the normalization $n$, our main results imply that for the small sample problem, the following generalized error exponent is best for asymptotic analysis, defined with respect to the normalization $r(m,n)=n^2/m$:
\begin{equation}\label{e:defexponent}
\begin{aligned}
J_F(\bphi) &\eqdef -\limsup_{n \rightarrow \infty} \frac{1}{r(m,n)}\log(P_F(\phi_n)),\\
 \quad J_M(\bphi)& \eqdef -\limsup_{n \rightarrow \infty} \frac{1}{r(m,n)}\log(P_M(\phi_n)).
\end{aligned}
\end{equation}

The generalized error exponents give the following approximation to the probabilities of false alarm and missed detection:
\begin{equation}\label{e:exponentialdecayGEE}
P_F  \approxeq e^{-r(n,m) J_F}, \quad P_M  \approxeq e^{-r(n,m) J_m}.
\end{equation}
The generalized error exponent provides new insights  that are not available from asymptotic consistency, or CLT analysis. The following results are established:
\begin{enumerate}
\item 
The minimum probability of error $P_e\!=\!\max\{P_F,\!P_M\}$, decays as $-\log(P_e)=r(n,m)J(1+o(1))$, where $r(n,m)=n^2/m$ and $J$ is the generalized error exponent for the probability of error. This is applicable not only for the case where the set of alternative distributions is defined by the total variation distance in \eqref{e:partitiondistance}, but also for a broad collection of distance / divergence functions.  

\item  
A class of tests based on \emph{separable
 statistics}, including the coincidence-based test $\ktest$, is shown to achieve the \emph{optimal} pair of generalized error exponents $J_F$ and $J_M$: 
\[J_M(\ktest)=\max\{J_M(\phi): J_F(\phi) \geq J_F(\ktest)\}.\] 
The {exact} formulae for these generalized error exponents are obtained. 

\item The performance of Pearson's chi-square test is worse than the coincidence-based test under the generalized error exponent criterion.

\end{enumerate}

\subsection{Overview of the approach}
In the large deviations analysis of the large sample problem, a main tool is the concentration of empirical distribution around the underlying distribution, e.g. Sanov's theorem and the method of types. For the small sample problem, the analysis in this paper is based on the concentration of \emph{profile} \cite{orlsanjun04p1469}, defined as the number of symbols that appear $l$ times for any fixed $l$. We focus on small $l$:  $l=0$, $l=1$ and $l=2$. The large deviations of the profile for these values of $l$ are obtained from asymptotic approximations to the log-moment generating function using the Poissonization technique following the literature of  {separable
 statistics}. This leads directly to the performance characterization of the coincidence-based test. 

The converse results in this paper are proved using bounds on likelihood ratio between null and alternative distributions on the decision region, a technique also used in \cite{bar89p107,pan08p4750}. To obtain tight bounds, we use a technique similar to the expurgating method in \cite{gal68}. The distributions used in proving the bounds are constructed using the mixing of indistinguishable distributions method (See e.g. \cite{pan08p4750, kelwagtulvis13p782}). 

%
%



\subsection{Organization of the paper}
The remainder of the 
paper is organized as follows: The universal hypothesis testing problems and tests are presented in \Section{sec:model}. The main achievability and converse results on generalized error exponents are described in \Section{sec:mainresults}. Extensions of the coincidence-based test are given in \Section{sec:coincidencetest}. Performance characterization of Pearson's chi-square test is given in \Section{sec:pearson}. In \Section{sec:extension}, it is shown that the generalized error exponent criterion is also applicable when the set of alternative distributions is defined using many other distance  functions. Connections to asymptotic relative efficiency and  extensions to more general universal hypothesis testing problem are discussed in \Section{sec:discussion}. The paper is concluded in \Section{sec:conclusion}.


\section{Models and Preliminaries}\label{sec:model}

Here we introduce a more general model based on a sequence of universal hypothesis testing problems, each with a finite number of outcomes (a finite alphabet). Consider an i.i.d.\ sequence of observations $\bfmZ_1^n\eqdef\{Z_1, \ldots, Z_n\}$ where $Z_i \in [m]\eqdef\{1,2,\ldots, m\}$. Let $\bmP$ denote the collection of probability mass functions (p.m.f.s) on $[m]$. We have two hypotheses: Under the null hypothesis $\Ho$, the p.m.f of $Z_i$ is given by $\pip$, the uniform distribution on $[m]$:
\begin{equation} \label{e:dispi}
\pip_j=1/m \textrm{ for } j \in [m].
\end{equation}
Under the alternative hypothesis $\Ha$, the p.m.f. of $Z_i$ belongs to a set $\mP$ given by
\begin{equation}\label{e:H1set}
\mP \eqdef \{\mu \in \bmP: d(\mu, \pip
) \geq  \epsy\}\end{equation}
where $d$ is taken to be the total variation distance $\dtv$ defined for any pair of p.m.f.s on $[m]$:
\[\dtv(\mu, \pip)=\sup_{B \subseteq [m]}\{|\mu(B)-\pip(B)|\}.\]

A test $\bphi=\{\phi_n\}_{n \geq 1}$ is given by a sequence of binary-valued functions $\phi_n: [m]^n \rightarrow \{0,1\}$. The test decides in favor of $\Ho$ if $\phi_n(\bfmZ_1^n)=0$. The test is required to be powerful against the set $\mP$ of alternative p.m.f.s, and thus its performance is evaluated using the probabilities of false alarm $P_F(\phi_n)$ and worst-case probability of missed detection $P_M(\phi_n)$: 
\[\begin{aligned}
P_F(\phi_n) & \eqdef \Prob_\pip\{\phi_n(\bfmZ_1^n)=1\}, \\P_{M,\mu}(\phi_n)&\eqdef\Prob_\mu\{\phi_n(\bfmZ_1^n)=0\},\\P_M(\phi_n)&\eqdef\sup_{\mu \in \mP}\Prob_\mu\{\phi_n(\bfmZ_1^n)=0\}.\end{aligned}\]

An important class of tests is based on separable statistics~\citep{med77p1}:
This is a test statistic of the form 
\begin{equation}\label{e:sepstats}
S_n=\sum_{j=1}^m f_j(n\Gamma^n_j),
\end{equation}
where  
\begin{equation}\label{e:gammadef}
\Gamma^n_j \eqdef \frac{1}{n}\sum_{i=1}^n \ind\{Z_i=j\}
\end{equation} is the empirical distribution, and $f_j$ is any function that does not depend on $n\Gamma^n$ except via its argument. General theorems on asymptotic distributions and asymptotic moments of separable statistics are available in \citep{med77p1}. Large deviations analysis for the case $m=O(n)$ is given in \citep{ron84p800,kol05p255}. We are not aware of previous general large deviations results for the small sample case where $n=o(m)$. 

In this paper, we focus on two tests based on separable statistics: Pearson's chi-square test~\citep{pea00p157} and the coincidence-based test introduced in \citep{pan08p4750}. 

After normalization, the test statistic of Pearson's chi-square test is given by
\begin{equation}\label{e:defpearson}
\pearson = \frac{n}{m}\sum_{j=1}^m \frac{(n\Gamma^n_j-n\pip_j)^2}{n\pip_j}.
\end{equation}
The test is given by $\ptest_n(\bfmZ_1^n)=\ind\{\pearson \geq \tau_n\}$. When the null distribution $\pip$ is uniform, the test statistic is the $\ell_2$ norm: $\pearson=\ell_2^2(n\Gamma^n, n\pip)$.

The  test statistic of the coincidence-based test is given by,
\begin{equation}\label{e:ktest}
\kstat=-\sum_{j=1}^m \ind\{n\Gamma^n_j=1\}.
\end{equation}
This test statistic $\kstat$ counts the number of symbols in $[m]$ that appear in the sequence exactly \emph{once}. The coincidence-based test is given by $\ktest_n(\bfmZ_1)=\ind\{\kstat \geq \Expect_\pip[\kstat]+\tau_n\}$. The coincidence-based test is applicable only when the null distribution is uniform.

An important difference between $\kstat$ and $\pearson$ is that $f_j$ is bounded in $\kstat$, while this is not true in $\pearson$. In \Section{sec:pearson}, we show that this difference has a significant impact on their  performance.

\subsection{Applications to hypothesis testing problems on a continuous state space}\label{sec:application2continous}
Tests designed for finite-valued observations can be applied to solve a hypothesis testing problem with continuous-valued observations by first partitioning the observation space.
Consider the hypothesis testing problem given in \Section{sec:intro} where the i.i.d.\ sequence of observations $\bfmY_1^n$ satisfies $Y_i \in \mathbf{Y}\eqdef[0,1]$. 
To apply a test designed for the finite-valued observations, we start with a partition $\prt$ as given in \eqref{e:defprt}.
The observation $Y_i$ is mapped to a finite-valued observation via $\mathcal{T}: \mathbf{Y} \rightarrow [m]$: $\mathcal{T}(Y_i)=j$ if $Y_i \in  A_j$. 
Assume that the partition is chosen so that the marginal of ${Z_i}$ is uniform under the null hypothesis: $P(A_j)=\frac{1}{m}$. For a test $\bphi$ designed for a discrete uniform null distribution, the corresponding test $\bphi(\{\mathcal{T}(Y_1), \ldots, \mathcal{T}(Y_n)\}	)$ can be applied to the problem with continuous-valued observations. This partition-based approach gives tests that are optimal for the model introduced in Section~\ref{sec:intro}. 
Suppose that  the set of alternative distributions is defined as 
\[\mathcal{Q}=\{Q: d_{\prt}(Q,P) \geq \epsy\}
\] where $d_\prt$ is defined in \eqref{e:partitiondistance}.
Then in terms of the probability of false alarm and worst-case probability of missed detection, without loss of optimality we can restrict our attention to tests whose test statistics take constant value on each cell $A_j$ of the partition. This is exactly the collection of partition-based tests we have described. 

In the hypothesis testing problem given in \Section{sec:intro}, it is assumed that the alternative distribution $Q$ is absolutely continuous with respect to $P$. The partition-based tests are still applicable when the assumption $Q$ is absolutely continuous with respect to $P$ does not hold, provided that the tests for finite-valued observations are designed for a more general model where we allow $\pip$ not to have full support: Instead of \eqref{e:dispi}, let the null distribution $\pip$ be\[
\pip_j=1/k \textrm{ for }  1\leq j \leq k, \pip_j=0 \textrm{ for } k<j\leq m.
\]
The generalized error exponent analysis still applies except the normalization should be $n^2/k$ instead of $n^2/m$.

\section{Generalized Error Exponents}\label{sec:mainresults}

In this section, we describe the main results 
for the small sample universal hypothesis testing problem. The following assumption is imposed throughout:
\begin{assumption}\label{as:largealphabet}
$n=o(m)$ and $m=o(n^2)$.
\end{assumption}

To show that the proper normalization to be used in the definition of generalized error exponent is $n^2/m$, we need to establish:
\begin{enumerate}
\item There is a test for which both generalized error exponents are non-zero. Therefore for any smaller normalization, the generalized error exponent  is infinite for the best possible tests. 
 
\item For any test, at least one of the generalized error exponents is finite. Therefore for any larger normalization the generalized error exponent would be trivially zero for any test. 
\end{enumerate}

These are established in \Theorem{KPERFORMANCE} and \Theorem{converse}. Moreover, these two theorems give precise characterization of the achievable region of $(J_F, J_M)$. This is depicted in \Figure{f:errorregion}. The boundary of the achievable region is given by the following formulae: 
For $\tau \in [0,\taul(\epsy)-1]$, 
\begin{equation}\label{e:EEregion}
\begin{aligned}
&J^*_F(\tau) \eqdef  \sup_{\theta \geq 0} \{\theta\tau-\half\bigl(e^{2\theta}-(1+2\theta)\bigr)\},\\
&J^*_M(\tau) \eqdef \sup_{\theta \geq 0} \{\theta(\taul(\epsy) -1 - \tau)-\half\bigl(e^{-2\theta} -(1-2\theta)\bigr)\taul(\epsy)\},\end{aligned}
\end{equation}
where $\taul: \Re_{+} \rightarrow \Re_{+}$ is the $C^{1}$ function, 
\begin{equation}\label{e:equtaul}
\taul(\epsy)=\left\{\begin{array}{c c}
1+4\epsy^2, & \epsy< 0.5,\\
1+{\epsy}/{(1-\epsy)},& \epsy \geq 0.5.
\end{array}\right.
\end{equation}

\begin{theorem}[Achievability]\label{t:KPERFORMANCE}
The coincidence-based test $\ktest$ achieves the generalized error exponents given in \eqref{e:EEregion}, i.e., for any $\tau \in [0, \taul(\epsy)-1]$, if the sequence of thresholds $\{\tau_n\}$ is chosen so that, 
\begin{equation}\label{e:eqdeftau}
\tau=\lim_{n \rightarrow \infty} m\tau_n/n^2,
\end{equation}
then the coincidence-based test has the generalized error exponents:
\begin{equation}\label{e:Kperformance}
J_F(\ktest)=J^*_F(\tau), \quad J_M(\ktest)=J^*_M(\tau).
\end{equation}
\end{theorem}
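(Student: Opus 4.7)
The plan is a Gärtner--Ellis analysis of the auxiliary statistic
\[
M_n \eqdef n+\kstat = \sum_{j=1}^m (n\Gamma^n_j)\ind\{n\Gamma^n_j\geq 2\},
\]
the total number of samples falling in non-singleton cells. Since $-\kstat=\sum_j\ind\{n\Gamma^n_j=1\}$ and $\sum_j n\Gamma^n_j=n$, the test event $\kstat\geq\Expect_\pip[\kstat]+\tau_n$ is equivalent to $M_n\geq\Expect_\pip[M_n]+\tau_n$; a direct binomial calculation gives $\Expect_\pip[M_n]=(n^2/m)(1+o(1))$, so by \eqref{e:eqdeftau} the mean and the threshold both live at the natural scale $r(m,n)=n^2/m$. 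Thus $P_F$ and $P_M$ are large-deviation probabilities for $M_n$ at the scale $n^2/m$, and $\tau$ measures deviation in units of this scale.

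I would next Poissonize: couple $(n\Gamma^n_j)$ to independent $\tilde N_j\sim\text{Poi}(n\pip_j)$ (resp.\ $\text{Poi}(n\mu_j)$) via the standard conditioning on $\sum_j\tilde N_j=n$, an event of probability $\Theta(1/\sqrt n)$ whose logarithm is $O(\log n)=o(n^2/m)$ under \Assumption{as:largealphabet} and is therefore invisible at the exponential scale. Under the Poisson law $M_n$ is a sum of independent contributions with closed-form MGF
\[
\Expect[e^{\theta \tilde N_j\ind\{\tilde N_j\geq 2\}}] = e^{\lambda_j(e^\theta-1)}-\lambda_j(e^\theta-1)e^{-\lambda_j},\qquad \lambda_j\in\{n/m,\,n\mu_j\}.
\]
Since every relevant $\lambda_j$ tends to $0$, a two-term Taylor expansion gives $\log\Expect[e^{\theta\tilde N_j\ind\{\tilde N_j\geq 2\}}]=\half(e^{2\theta}-1)\lambda_j^2+O(\lambda_j^3)$. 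Summing over $j$ and normalizing produces the scaled log-MGF limits
\[
\lim_{n\to\infty}\frac{m}{n^2}\log\Expect_\pip[e^{\theta M_n}]=\half(e^{2\theta}-1),\qquad \lim_{n\to\infty}\frac{m}{n^2}\log\Expect_\mu[e^{\theta M_n}]=\half(e^{2\theta}-1)\cdot m\|\mu\|_2^2.
\]
Both limits are smooth and essentially steep in $\theta$, so Gärtner--Ellis delivers a full LDP for $M_n/(n^2/m)$ at speed $n^2/m$ with rate function $I_c(x)=\sup_\theta\{\theta x-\half c(e^{2\theta}-1)\}$, where $c=1$ under $\Ho$ and $c=m\|\mu\|_2^2$ under $\mu$. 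Evaluating $I_1(1+\tau)$ and using the algebraic identity $\theta(1+\tau)-\half(e^{2\theta}-1)=\theta\tau-\half(e^{2\theta}-1-2\theta)$ immediately recovers $J^*_F(\tau)$, with matching upper (Chernoff) and lower (LDP) bounds on $P_F$.

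For $J^*_M$ one needs the constant $c$ attained by the worst-case alternative. A short Lagrangian calculation shows $\inf_{\mu\in\mP}m\|\mu\|_2^2=\taul(\epsy)$, with two-level minimizer $\mu^*$: $\mu^*_j\in\{(1\pm 2\epsy)/m\}$ on equal halves of $[m]$ when $\epsy<\half$ (giving $1+4\epsy^2$), and $\mu^*$ uniform on a $(1-\epsy)$-fraction of cells when $\epsy\geq\half$ (giving $1+\epsy/(1-\epsy)$). In both regimes $\max_jn\mu^*_j\to 0$, so the Poisson expansion remains valid with $c=\taul(\epsy)$, and Gärtner--Ellis at $\mu^*$ applied to the lower-tail event $\{M_n/(n^2/m)\leq 1+\tau\}$ (a deviation below the mean $\taul(\epsy)$) yields $J^*_M(\tau)$ after the same rearrangement. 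The step I expect to require the most care is uniformizing the Chernoff upper bound over all $\mu\in\mP$, not just $\mu^*$: I would partition $[m]$ into light ($n\mu_j$ small) and heavy ($n\mu_j$ bounded below) coordinates, and observe that any heavy coordinate depresses $\log\Expect_\mu[e^{-\theta M_n}]$ by $-\Omega(n)=-\omega(n^2/m)$, so the supremum of the Chernoff bound is attained in the purely-light regime where it is governed by $\inf m\|\mu\|_2^2=\taul(\epsy)$; combined with the fact that $I_c(1+\tau)$ is increasing in $c$ (check: $\partial_c I_c=-\half(e^{2\theta_*}-1)>0$ at the negative optimizer $\theta_*$), this gives the matching upper bound for $P_M$. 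Secondary items --- that de-Poissonization is $o(n^2/m)$, the regularity conditions for Gärtner--Ellis, and measurability of the supremum in $\mu$ --- are routine.
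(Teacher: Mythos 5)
Your overall architecture is the same as the paper's (Poissonization, a second-order expansion of the per-cell log-MGF giving the $\half(e^{2\theta}-1)$ limit at scale $n^2/m$, G\"artner--Ellis, the bi-uniform worst case $m\|\mu\|_2^2\to\taul(\epsy)$, and a light/heavy split to uniformize the Chernoff bound over $\mP$), and all of your algebra checks out. But there is one genuine gap, and it sits exactly where the paper invests most of its effort. You dispose of de-Poissonization by asserting that the conditioning event $\{\sum_j\tilde N_j=n\}$ has probability $\Theta(1/\sqrt n)$, so the correction is $O(\log n)=o(n^2/m)$. That last equality is false under \Assumption{largealphabet} alone: the assumption only gives $m=o(n^2)$, so $n^2/m$ may grow arbitrarily slowly (e.g.\ $m=n^2/\log\log n$ gives $n^2/m=\log\log n$), in which case a residual $\half\log n$ in the log-MGF is \emph{not} negligible at the normalization $n^2/m$. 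Your claim would require the extra hypothesis $m=o(n^2/\log n)$. Moreover, even granting that, the crude inequality $\Expect_{\rm mult}[e^{\theta M_n}]\leq \Expect_{\rm Poi}[e^{\theta M_n}]/\Prob\{\sum_j\tilde N_j=n\}$ only goes one way; the G\"artner--Ellis \emph{lower} bound needs a matching lower bound on the conditional MGF, which naive Poissonization does not supply. This is precisely why the paper computes the conditional MGF exactly via Cauchy's formula and a saddle-point/contour analysis (Proposition~\ref{T:ASYMLMGFK}): the $n!\,\lambdab^{-n}e^{H(0)}/\sqrt{n}$ prefactors cancel through Stirling's formula, leaving an $O(1)$ (not $O(\log n)$) additive error, valid in the full range of \Assumption{largealphabet}. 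To repair your proof you must either add the hypothesis $m=o(n^2/\log n)$ and supply a local-CLT-type two-sided de-Poissonization for the tilted measure, or reproduce the saddle-point computation.

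A secondary imprecision: in uniformizing the missed-detection bound you claim each heavy coordinate ($n\mu_j\geq\eta$) depresses $\log\Expect_\mu[e^{-\theta M_n}]$ by $-\Omega(n)$. The correct statement is a depression of order $-\Omega\bigl(n\sum_{j\,\rm heavy}\mu_j\bigr)$; when the total heavy mass $\beta(\mu)$ tends to zero this is not $-\omega(n^2/m)$, and one needs the intermediate regime handled by the paper's Proposition~\ref{T:UPPER-BOUNDAPPROXIMATELMGF2}, which shows that for small $\beta(\mu)$ the light coordinates still contribute $\half\frac{n^2}{m}(m\sum_{j\notin\supseta(\mu)}\mu_j^2)(e^{-2\theta}-1)(1-\delta)$ and that removing a small heavy mass barely lowers $m\|\mu\|_2^2$ below $\taul(\epsy)$. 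Your stated plan would let such distributions slip through the two cases you consider.
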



\begin{theorem}[Converse]\label{t:converse}
Consider any $\tau \in [0,\kappa(\epsy)-1]$.
For any test $\phi$ satisfying
\begin{equation}
J_F(\phi)\geq J_F^*(\tau),\nonumber
\end{equation}
the following upper-bound on the generalized error exponent of missed detection holds:
\begin{equation}
J_M(\phi)\leq J_M^*(\tau).\nonumber
\end{equation}
\end{theorem}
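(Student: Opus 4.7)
The plan is the standard minimax lower-bound strategy: construct a prior $\nu$ supported on $\mathcal{Q}_n$, apply the Neyman--Pearson lemma to the simple test of $\pip^{\otimes n}$ against the mixture $\bar P_n \eqdef \int \mu_\sigma^{\otimes n}\,d\nu(\sigma)$, and show that the Bayes-optimal likelihood-ratio test already achieves the exponent pair $(J_F^*(\tau), J_M^*(\tau))$. Since
\[
P_M(\phi_n) = \sup_{\mu \in \mathcal{Q}_n} \Prob_\mu\{\phi_n = 0\} \geq \Expect_\nu\!\bigl[\Prob_{\mu_\sigma}\{\phi_n = 0\}\bigr],
\]
the worst-case bound follows immediately from the corresponding Bayes bound.

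\textbf{Construction of the prior.} For $\epsy < 1/2$, take $\nu$ uniform on $\bigl\{\sigma \in \{\pm 1\}^m : \sum_j \sigma_j = 0\bigr\}$ and set $\mu_\sigma(j) = (1 + 2\epsy\sigma_j)/m$. Then $\sum_j \mu_\sigma(j) = 1$ and $\dtv(\mu_\sigma, \pip) = \epsy$, so $\mu_\sigma \in \mathcal{Q}_n$. For $\epsy \geq 1/2$, let $\nu$ be uniform over subsets $S \subseteq [m]$ with $|S| = (1-\epsy)m$ and put $\mu_S$ uniform on $S$; again $\dtv(\mu_S, \pip) = \epsy$. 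In both cases the per-cell second moment $\Expect_\nu\bigl[(m\mu_\sigma(j))^2\bigr]$ equals precisely $\taul(\epsy)$; this is the quantity that will drive the rate function.

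\textbf{Analysis of the likelihood ratio.} Writing $L_n \eqdef d\bar P_n / d\pip^{\otimes n}$, the product form of the mixture collapses to
\[
\log L_n(z_1^n) = \sum_{j=1}^m \psi(N_j), \qquad \psi(N) \eqdef \log \Expect_\nu\!\bigl[(m\mu_\sigma(j))^N\bigr],
\]
where $N_j$ is the cell-$j$ occupancy. By construction $\psi(0) = \psi(1) = 0$, so only cells with $N_j \geq 2$ contribute. Under Assumption~\ref{as:largealphabet}, Poissonization approximates $\{N_j\}$ by independent Poisson$(n/m)$ variables under $\pip^{\otimes n}$, and cells with $N_j \geq 3$ contribute $O(n^3/m^2) = o(n^2/m)$, negligible on the exponent scale. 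Consequently
\[
\log L_n \sim \psi(2) \cdot T_n, \qquad T_n \eqdef \#\{j : N_j = 2\},
\]
with $\psi(2) = \log \taul(\epsy)$. Under $\pip^{\otimes n}$, $T_n$ is asymptotically Poisson with mean $n^2/(2m)$, while under $\bar P_n$ its mean is inflated by the factor $\taul(\epsy)$. The Neyman--Pearson threshold test on $L_n$ is thus asymptotically equivalent to a threshold test on $T_n$; applying classical Chernoff bounds for Poisson large deviations, normalized by $r(m,n) = n^2/m$, gives exponents of exactly the Legendre-transform form. A direct computation of the critical $\theta^* = \tfrac{1}{2}\log(1+\tau)$ for $J_F^*$ and $\theta^* = \tfrac{1}{2}\log(\taul(\epsy)/(1+\tau))$ for $J_M^*$ verifies the match with \eqref{e:EEregion}, completing the bound.

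\textbf{Main obstacle.} The delicate step is the third one. Two technical issues must be addressed. First, the Poissonization needs to be quantitative in the joint limit with $n/m \to 0$ and $n^2/m \to \infty$: the de-Poissonization error on $\log L_n$ has to be shown to be $o(n^2/m)$ on the exponent scale, not merely in distribution. Second, the prior $\nu$ has weakly dependent coordinates $\{\sigma_j\}$ because of the constraint $\sum_j \sigma_j = 0$ (respectively $|S| = (1-\epsy)m$), so the product form of $L_n$ above is only an approximation; the dependence must be shown to perturb $\log L_n$ by $o(n^2/m)$, for instance by coupling to an i.i.d.\ prior and absorbing the Radon--Nikodym correction using Stirling estimates. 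Once these two steps are in place, matching the resulting Legendre conjugates to $J_F^*$ and $J_M^*$ is a routine calculation.
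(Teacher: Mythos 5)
Your strategy is sound and genuinely different in its top-level reduction from the paper's. The paper does \emph{not} invoke the Neyman--Pearson lemma on the mixture. Instead it constructs an explicit event $B_{n,\tau,\delta}$ defined by the coincidence statistics ($\sum_j\ind\{n\Gamma_j^n=1\}$ not too small and $\sum_j\ind\{n\Gamma_j^n=2\}$ large), shows via the separable-statistic MGF machinery already built for the achievability proof that $\Prob_\pip(B_{n,\tau,\delta})\approx e^{-(n^2/m)J_F^*(\tau-\delta)}$, and then proves a \emph{pointwise} lower bound on the mixture likelihood ratio $\bmu^n/\pip^n$ valid on every sequence in $B_{n,\tau,\delta}$ (Lemmas~\ref{T:LEMMABMUBOUND1} and \ref{T:LEMMABMUBOUND2}, by exact combinatorics of $\binom{m-s}{\cdot}/\binom{m}{\cdot}$-type ratios plus Stirling, with a Laplace-method summation over the overlap sizes $k_1,k_2$ in the $\epsy<1/2$ case). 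The chain $P_M\geq \frac{\bmu^n}{\pip^n}(\cdot)\,[\Prob_\pip(B)-P_F]$ then closes the argument. Your mixtures are exactly the paper's ($\mu_{\mathcal U}$ from \eqref{e:muUalternative} and its $\epsy\geq 1/2$ analogue), and your Legendre computations do match \eqref{e:EEregion} (your $\theta^*=\half\log(1+\tau)$ and $\theta^*=\half\log(\taul(\epsy)/(1+\tau))$ are correct). What the paper's route buys is that it never needs the large-deviation behavior of $\log L_n$ under the alternative: only the null probability of a coincidence event and a one-sided, deterministic LR bound on that event are required.

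The gap in your proposal is that the step you flag as ``delicate'' is essentially the entire proof, and it is strictly harder than what the paper does. To extract the exponent of $\beta_n(\alpha_n)$ from the Neyman--Pearson reduction you need a two-sided approximation $\log L_n=\log(\taul(\epsy))\,T_n+o(n^2/m)$ holding with probability $1-e^{-\omega(n^2/m)}$ under \emph{both} $\pip^{\otimes n}$ and $\bar P_n$, together with the full Chernoff/G\"artner--Ellis analysis of $T_n$ under both measures; ``asymptotically Poisson'' in distribution is not sufficient at this scale, and the $N_j\geq 3$ residue and the prior's exchangeable (non-product) dependence must each be controlled at the level of moment generating functions, not means. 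None of this is carried out. The paper's event-restricted change of measure avoids the analysis under $\bar P_n$ entirely and replaces the superexponential concentration of $\log L_n$ by an exact combinatorial identity evaluated on $B_{n,\tau,\delta}$. So the proposal is a viable alternative architecture, but as written it is a plan whose load-bearing steps remain open; to complete it you would in effect have to reprove, under the mixture, the saddle-point estimates the paper establishes only under $\pip$ and under individual product alternatives.
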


Compare the results in \Theorem{KPERFORMANCE} and \Theorem{converse} with the asymptotic consistency result in \citep{pan08p4750}, where it is shown that $n=\sqrt{m}$ is the critical point that separates the cases whether a consistent test exists.
The achievability result in the asymptotic consistency an	alysis, which states that there is a consistent test whenever $n=\sqrt{m}$,  follows directly from  \Theorem{KPERFORMANCE}. The converse result in asymptotic consistency also follows  from an intermediate result in the proof of \Theorem{converse}. The fact that $n=\sqrt{m}$ is the critical point is connected to the birthday problem: The number of people needed to have a coincident birthday is approximately $\sqrt{365}$. Similarly, the number of samples needed to have repeated observations is $n=\sqrt{m}$. Without a repeated observation, it is impossible to distinguish between the null and alternative distribution. A refined large-deviation analysis of the coincidence is used in this paper to prove  \Theorem{KPERFORMANCE} and \Theorem{converse}.

\begin{figure}[h!]
\centering
\includegraphics[width=0.45\textwidth]{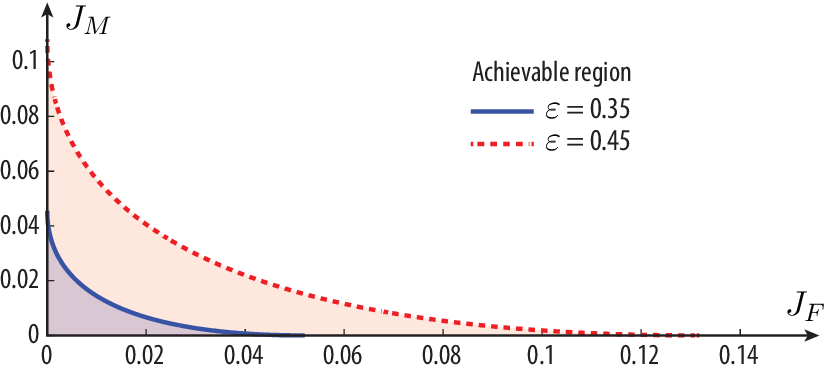}
\caption{Achivable region when $\epsy=0.35$ and $\epsy=0.45$ given by the lower-bound in \Theorem{KPERFORMANCE} and upper-bound in \Theorem{converse}. The lower and upper bound meet over the entire region.}\vspace{-0.1cm}
\label{f:errorregion}
\end{figure}

We now compare the approximation in \eqref{e:exponentialdecayGEE} given by the generalized error exponent analysis to the actual empirical performance of the coincidence-based test $\ktest$. The results are shown in \Figure{f:performKep035} for $\epsy=0.35$ and  \Figure{f:performKep045} for $\epsy=0.45$. We choose the threshold $\tau$ based on \eqref{e:Kperformance} so that $J_F$ and $J_M$ are the same. The generalized error exponents are estimates of the \emph{slope} of $\log(P_F)$ and $\log(P_M)$ with respect to $r(n,m)$. It can be observed that the {slope} from the theoretical approximation by generalized error exponents approximately matches the slope of the simulated value. The remaining difference between the theoretical and the empirical slope in \Figure{f:performKep045}  is mainly due to two reasons: First, the threshold chosen is based on the first order approximation. It can be observed from the figure that the slope for $P_M$ is slightly smaller than the predicted slope while the one for $P_F$ is larger. A slightly larger threshold might yield a slope that is  closer to the predicted. Second, the generalized error exponent is only the first term in the asymptotic expansion of $\log(P_F)$ and $\log(P_M)$. Higher order terms might capture the remaining difference. 

\begin{figure}[h!]
\centering
\includegraphics[width=0.48\textwidth]{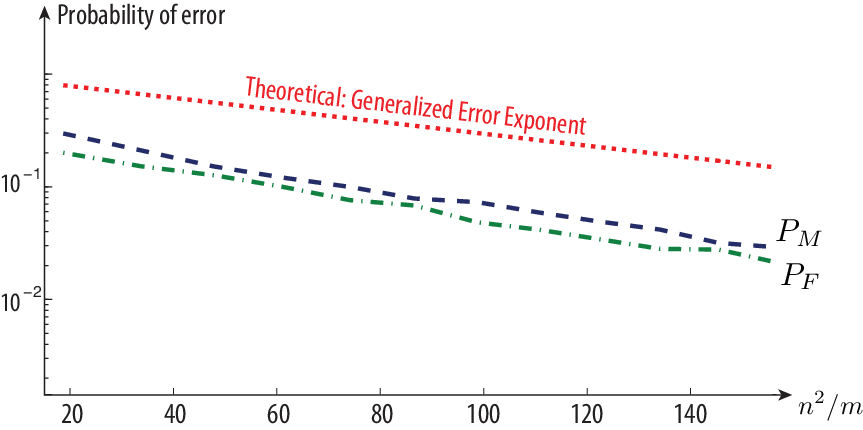}
\caption{ 
Performance of $\ktest$ with $\epsy=0.35$. 
} 
\vspace{-2mm}
\label{f:performKep035}
\end{figure}

\begin{figure}[h!]
\centering
\includegraphics[width=0.45\textwidth]{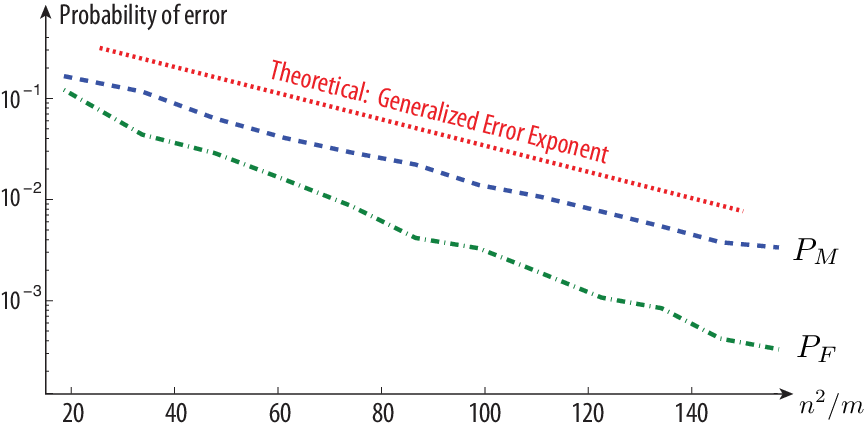}
\caption{ 
Performance of $\ktest$ with $\epsy=0.45$. 
} 
\vspace{0mm}
\label{f:performKep045}
\end{figure}

\subsection{Rate function and worst-case distributions}



In the analysis of a rate function for these hypothesis testing problems, we
consider the following restricted set of alternative distributions,
\begin{equation}\label{e:defbmPmub}
\bmPmub=\{\mu \in \bmP: \max_j \mu_j \leq \mub/m\},
\end{equation}
where $\mub$ is a constant satisfying $\mub \geq \max\{2/(1-\epsy), 4\epsy\}$.
This restricted set of distributions has bounded likelihood ratios with respect to the uniform distribution $\pip$. This bound simplifies treatment of the coincidence-based test $\ktest$.

In analogy with standard terminology from large deviations theory, the following limit
will be called the \textit{rate function} associated with the test $\ktest$, 
a threshold $\tau$,
and a sequence of distributions $\smu=\{\mu^{(1)},\mu^{(2)}, \mu^{(3)}, \ldots\}$ with $\mu^{(n)} \in \bmPmub$:
\begin{equation}
J_\smu(\ktest, \tau)\!=\!-\!\limsup_{n \rightarrow \infty}\frac{m}{n^2} \log(\Prob_{\mu^{(n)}}\{\kstat\leq \Expect_{\pip}[\kstat]\!+\!\frac{n^2}{m}\tau\}).\nonumber
\end{equation}

We show that $J$ is a function of the following quantity:
\begin{equation}\label{e:defD}
\kinf(\smu)\eqdef \liminf_{n}\sum_{j}\frac{(\mu^{(n)}_j)^2}{\pip_j}.
\end{equation}
The proof of 
\Theorem{ratefunction} is given in \Appendix{kperformance}. 

\begin{theorem}
\label{t:ratefunction}
The rate function for the coincidence-based test is the supremum,
\begin{equation}\label{e:formulamathbfJ}
{J}_\smu(\ktest,\tau) = \sup_{\theta \geq0} \{\theta(-1-\tau)-\half (e^{-2\theta}-1)\kinf(\smu)\}.
\end{equation}
\end{theorem}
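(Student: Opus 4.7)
I would prove Theorem~\ref{t:ratefunction} via a Gärtner--Ellis--type large deviations argument for the upper-tail event for $K_n \eqdef -\kstat = \sum_j \ind\{n\Gamma^n_j = 1\}$. A first computation gives $\Expect_\pip[\kstat] = -n(1-1/m)^{n-1} = -n + n^2/m + o(n^2/m)$, and a Taylor expansion of $n\mu^{(n)}_j(1-\mu^{(n)}_j)^{n-1}$ together with the bounded-likelihood constraint $\mu^{(n)} \in \bmPmub$ (which controls the cubic remainder $n^3\sum_j(\mu^{(n)}_j)^3 \le \mub^2 n^3/m^2 = o(n^2/m)$) yields $\Expect_{\mu^{(n)}}[K_n] = n - (n^2/m)\kinf(\smu) + o(n^2/m)$; thus the target event rewrites as an upper-tail deviation of $K_n$ from its $\mu^{(n)}$-mean by $(n^2/m)(\kinf-1-\tau)$. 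The natural object is the scaled cumulant $\Lambda_n(\theta) \eqdef (m/n^2)\log \Expect_{\mu^{(n)}}[e^{\theta(K_n - n)}]$, and I aim to establish the pointwise limit $\Lambda_n(\theta) \to \Lambda(\theta) \eqdef \half(e^{-2\theta} - 1)\kinf(\smu)$; the Legendre conjugate $\Lambda^*(-1-\tau)$ then matches \eqref{e:formulamathbfJ} (the supremum reduces to $\theta \ge 0$ because $-(1+\tau) > \Lambda'(0) = -\kinf$ when $\kinf > 1+\tau$).

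\textbf{Main step: the MGF asymptotic.}
The subtle feature of the SCGF computation is that a naïve Poissonization $Y_j \sim \mathrm{Pois}(n\mu^{(n)}_j)$ independent yields $\Var(K_n^{\mathrm{Pois}}) \asymp n$, and a Chernoff bound based on this Poisson MGF produces an exponent of order $n^3/m^2 = o(n^2/m)$; the multinomial constraint $\sum_j Y_j = n$ is precisely what shrinks the relevant variance to order $n^2/m$. To capture the effect of this conditioning I would work with the joint cumulant of $(K^{\mathrm{Pois}}, N)$ under the Poisson model,
\[
L(\theta,t) \eqdef n(e^t - 1) + \sum_j \log\!\bigl(1 + (e^\theta - 1)\, n\mu^{(n)}_j \, e^{t - n\mu^{(n)}_j e^t}\bigr),
\]
and extract $\Expect_{\mu^{(n)}}[e^{\theta K_n}] = \Expect^{\mathrm{Pois}}[e^{\theta K^{\mathrm{Pois}}} \mid N = n]$ by saddle point in $t$. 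The equation $\partial_t L(\theta,t^*) = n$ is solved to leading order by $t^* = -\theta$---one verifies $ne^{-\theta} + (1 - e^{-\theta})n = n$ exactly---with a subleading correction $\delta \eqdef t^* + \theta = O(n/m)$. Expanding $L(\theta,t^*) - t^* n$ to second order in $\mu^{(n)}_j$ and applying the algebraic identity $(1 - e^{-\theta})(1 + e^{-\theta}) = 1 - e^{-2\theta}$ produces the crucial cancellation of the $\Theta(n)$ terms and leaves
\[
L(\theta,t^*) - t^* n = \theta n - \half (1 - e^{-2\theta})\kinf(\smu)(n^2/m) + o(n^2/m).
\]
Dividing by $\Prob^{\mathrm{Pois}}(N=n) = \Theta(n^{-1/2})$ (Stirling) contributes only $O(\log n) = o(n^2/m)$, which gives the desired $\Lambda_n(\theta) \to \Lambda(\theta)$.

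\textbf{Chernoff upper bound and tilted lower bound.}
With $\Lambda$ in hand, the Chernoff inequality $\log \Prob_{\mu^{(n)}}(K_n \ge a_n) \le -\theta a_n + \log \Expect_{\mu^{(n)}}[e^{\theta K_n}]$ with $a_n = n - (n^2/m)(1+\tau) + o(n^2/m)$ reduces, after substitution, to $\log \Prob_{\mu^{(n)}}(K_n \ge a_n) \le (n^2/m)[\theta(1+\tau) + \half(e^{-2\theta}-1)\kinf] + o(n^2/m)$; optimizing over $\theta \ge 0$ at $\theta^* = \half\log(\kinf/(1+\tau))$ gives the upper direction. For the matching lower bound I would tilt at the optimum: define $d\widetilde{\Prob}_n \propto e^{\theta^* K_n}\, d\Prob_{\mu^{(n)}}$; then differentiating the SCGF expansion once and twice (justified by uniform bounds on the saddle-point remainder in a neighborhood of $\theta^*$) yields $\widetilde{\Expect}_n[K_n] = a_n + o(n^2/m)$ and $\Var_{\widetilde{\Prob}_n}(K_n) = 2 e^{-2\theta^*}\kinf(n^2/m)(1+o(1)) = O(n^2/m)$, so Chebyshev gives $\widetilde{\Prob}_n(K_n \in [a_n, a_n + \varepsilon n/\sqrt m]) \ge 1/2$ for small $\varepsilon$; the standard change-of-measure identity then produces $\log \Prob_{\mu^{(n)}}(K_n \ge a_n) \ge -(n^2/m)J_\smu(\ktest,\tau) + o(n^2/m)$, closing the proof. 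The principal obstacle is the main step: the $\Theta(n)$ contributions from $n(e^{t^*} - 1 - t^*)$ and from the leading Taylor term of the logarithmic sum in $L$ must cancel exactly, and all remainders (from the saddle perturbation $\delta$, the cubic term in $(1 - \mu^{(n)}_j)^{n-1}$, and the second-order Taylor of $\log(1 + \cdot)$) must genuinely be $o(n^2/m)$---this relies crucially on $\mu^{(n)}_j \le \mub/m$ together with Assumption~\ref{as:largealphabet}.
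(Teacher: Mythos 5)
Your proposal is correct and follows essentially the same route as the paper: the heart of the argument is the same log-MGF asymptotic $\Lambda_{\nu,\kstattilde}(\theta)=\half\frac{n^2}{m}\bigl(m\sum_j\nu_j^2\bigr)(e^{-2\theta}-1)+O(\frac{n^3}{m^2})+O(1)$ (the paper's Proposition~\ref{T:ASYMLMGFK}), obtained by Poissonization and a saddle point at total rate $ne^{-\theta}(1+O(n/m))$ with exactly the cancellation $(1-e^{-\theta})(1+e^{-\theta})=1-e^{-2\theta}$ you identify; your joint-cumulant tilt in $t$ is the exponential-tilting parametrization of the paper's Cauchy contour through $\lambdab$. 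The only cosmetic difference is that the paper then invokes the G\"artner--Ellis theorem where you carry out the Chernoff bound and tilted lower bound by hand.
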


The rate function can be applied to identify the sequence of worst-case alternative distributions, for which the probability of missed detection is asymptotically the largest. Note that  $J_\smu(\ktest,\tau)$ is monotonically increasing in $\kinf(\smu)$. Therefore, the smaller the quantity $\kinf(\smu)$, the larger the probability of missed detection associated with $\smu$. The sequence of distributions achieving the minimum $\kinf(\smu)$ is given in the following lemma:
\begin{lemma}\label{T:WORSTMUJSQUARE}
When $\pip$ is the uniform distribution, we have
\begin{equation}
\inf_{\mu \in \mP} \bigl(\sum_{j=1}^m \frac{\mu_j^2}{\pip_j}\bigr)=(1+\taul(\epsy))(1+o(1))\label{e:worstmujsquare}.
\end{equation}
The infimum is achieved by the following bi-uniform distribution:
\begin{compactnumerate}
\item When $\epsy < 0.5$,
\begin{equation}\label{e:worstmu2a}
\mu_{j}^*=\left\{\begin{array}{c c}{1}/{m}+{\epsy}/{\lfloor m/2 \rfloor}, &  j \leq \lfloor m/2 \rfloor,\\ {1}/{m}-{\epsy}/{\lceil m/2 \rceil}, & j> \lfloor m/2 \rfloor.
\end{array}\right.\end{equation}
\item When $\epsy \geq 0.5$,
\begin{equation}\label{e:worstmu1}
\mu_{j}^{*}=\left\{\begin{array} {c c} {1}/{\lfloor m(1-\epsy)\rfloor}, & j \leq \lfloor m(1-\epsy)\rfloor,\\ 0, & j > \lfloor m(1-\epsy)\rfloor.\end{array}\right.
\end{equation}
\end{compactnumerate}
\end{lemma}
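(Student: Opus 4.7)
The plan is to solve a constrained convex optimization: minimize $m\sum_j \mu_j^2 = \sum_j \mu_j^2/\pip_j$ over probability vectors $\mu$ satisfying $\dtv(\mu,\pip) \geq \epsy$. First I rewrite the total variation constraint: $\dtv(\mu,\pip) \geq \epsy$ is equivalent to the existence of some $B \subseteq [m]$ with $\sum_{j \in B}\mu_j \geq |B|/m + \epsy$. For each fixed witness set $B$ of cardinality $k$, I solve the inner problem (minimize $\sum_j \mu_j^2$ subject to $\mu \geq 0$, $\sum_j \mu_j = 1$, $\sum_{j \in B}\mu_j \geq k/m + \epsy$) by a Lagrangian / KKT argument. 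The objective is strictly convex and symmetric under permutations within $B$ and within $B^c$, so the optimum is unique and piecewise uniform: mass $k/m + \epsy$ spread evenly on $B$ and mass $1-k/m-\epsy$ spread evenly on $B^c$. The inequality constraint is tight, since otherwise mass could be moved back toward $\pip$ to decrease the $\ell^2$-norm. A short calculation then reduces the inner minimum to
\[
m\sum_j \mu_j^2 \;=\; 1 + \frac{\epsy^2}{\alpha(1-\alpha)}, \qquad \alpha = k/m.
\]

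The second step is to optimize over $\alpha$. Non-negativity on $B^c$ forces $\alpha \leq 1-\epsy$, and replacing $B$ by its complement shows that one may also assume $\alpha \geq \epsy$. Since $\alpha(1-\alpha)$ is concave with unconstrained maximum at $\alpha = 1/2$, this interior maximizer lies in $[\epsy, 1-\epsy]$ if and only if $\epsy \leq 1/2$. Consequently, for $\epsy < 1/2$ the optimum is $\alpha = 1/2$, yielding value $1 + 4\epsy^2$; for $\epsy \geq 1/2$ the optimum is at the boundary $\alpha = 1-\epsy$, yielding $1 + \epsy/(1-\epsy)$. These two expressions coincide with the two cases of $\taul(\epsy)$ in \eqref{e:equtaul}.

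For the third step I take $k = \lfloor m/2 \rfloor$ (resp.\ $k = \lfloor m(1-\epsy)\rfloor$) to realize the optimal $\alpha$ up to a $1/m$ rounding error. The resulting piecewise-uniform distribution is precisely $\mu^*$ in \eqref{e:worstmu2a} (resp.\ \eqref{e:worstmu1}); direct substitution confirms $\dtv(\mu^*,\pip) = \epsy$ and that the objective equals the continuous optimum times $1 + O(1/m) = 1 + o(1)$. The matching lower bound for arbitrary $\mu \in \mP$ then follows automatically from the fixed-$B$ minimum computed in step one, since every $\mu \in \mP$ admits at least one witness set $B$. The only genuine obstacle is the discretization bookkeeping when splitting into the two regimes of $\epsy$ and verifying that the integer rounding contributes only a multiplicative $1+o(1)$; the remaining calculations are elementary.
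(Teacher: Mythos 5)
Your proof is correct and follows essentially the same route as the paper: both arguments reduce to bi-uniform distributions via a strict-convexity/Jensen symmetrization (the paper symmetrizes over the level sets $\{j:\mu^*_j\geq \pip_j\}$ and $\{j:\mu^*_j<\pip_j\}$, you over a witness set $B$ of the total-variation constraint and its complement, which coincide at the optimum) and then perform the one-parameter optimization over the mass split. Your write-up is in fact more complete than the paper's, since it carries out the reduction to $1+\epsy^2/(\alpha(1-\alpha))$ and the boundary analysis at $\alpha=1-\epsy$ that the paper leaves implicit.
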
 

Thus, the  worst case distributions are identified as \emph{bi-uniform} distributions whose p.m.f.s take only two possible values. 
\begin{proof}[Proof of Lemma~\ref{T:WORSTMUJSQUARE}]
The main task is to show that any optimizer $\mu^*$ is a bi-uniform distribution. The formulae \eqref{e:worstmu2a} and \eqref{e:worstmu1} follow from solving the optimization in \eqref{e:worstmujsquare} restricted to bi-uniform distributions. 

Let $\mathcal{J_+}=\{j: \mu^*_j \geq \pip_j\}$, $\mathcal{J_-}=\{j: \mu^*_j <\pip_j\}$.
The following quadratic programming problem has a unique optimal solution $x^*=\mu^*$:
\[
\begin{array}{c c}
\min & \sum_{j \in \mathcal{J_+}} x_j^2,\\
{\sf{s.t.}} &  \sum_{j \in \mathcal{J_+}} x_j=\sum_{j \in \mathcal{J_+}}\mu^*_j,\\
& x_j=\mu^*_j \textrm{ for $j \in \mathcal{J_-}$},\\
& x_j \geq \pip_j \textrm{ for $j \in \mathcal{J_+}$}.
\end{array}
\]
By Jensen's inequality, $x^*$ must satisfy $x^*_j=x^*_{j'}$ for all $j, j' \in \mathcal{J_+}$. Thus, $\mu^*$ also satisfies $\mu^*_j=\mu^*_{j'} \textrm{ for all $j, j' \in \mathcal{J_+}$}$. The same conclusion holds for $j \in \mathcal{J_-}$. Consequently, $\mu^*$ must be a bi-uniform distribution.
\end{proof}

\subsection{Sketch of the proofs for \Theorem{KPERFORMANCE} and \Theorem{converse}}

The large deviations characterization of the probability of false alarm $P_F$ for the coincidence-based test follows from the following asymptotic approximation of the logarithmic moment generating function of its test statistic:
\begin{equation}
\begin{aligned}
\log\bigl(\Expect_{\pip}[\exp\{\theta (n-\kstat)\}]\bigr)=&\frac{1}{2} \frac{n^2}{m}\bigl(m\sum_{j=1}^m\pip_j^2\bigr)(e^{-2\theta}-1)\\&+O(\frac{n^3}{m^2})+O(1).\nonumber
\end{aligned}
\end{equation}
A characterization of $P_M$ is obtained in similar way except we need to work with the set of alternative distributions. We show that the probability of missed detection is dominated by that associated with the worst-case distributions given in Lemma~\ref{T:WORSTMUJSQUARE}. The details are given in \Appendix{kperformance}.

The main idea to prove the converse result is the following: A sequence of events $\{B_{n, \tau,\delta}\}$ is constructed so that (i) the probability of these events can be lower-bounded based on the condition on $P_F$; (ii) the probability of  missed detection conditioned on these events is lower-bounded. The key to the proof is the following inequality: 
\begin{equation}
\begin{aligned}
&P_M(\phi_n)\geq \sup_{\mu \in \mP}\Prob_{\mu}\bigl(\{\phi_n=0\} \cap B_{n, \tau,\delta}\bigr)\\ \geq& \sup_{\mu \in \mP}\frac{\mu^n}{\pip^n}(\{\phi_n=0\} \cap B_{n, \tau,\delta})\Prob_\pip(\{\phi_n=0\} \cap B_{n, \tau,\delta}).\nonumber\end{aligned}
\end{equation}
A lower-bound on the second term $\Prob_\pip(\{\phi_n=0\} \cap B_{n, \tau,\delta})$  follows from  the construction of the events and the assumption on the probability of false alarm.

To lower-bound the first term $ \sup_{\mu \in \mP}\frac{\mu^n}{\pip^n}(\{\phi_n=0\} \cap B_{n, \tau,\delta})$, we construct a collection of distributions over which the largest likelihood ratio is always lower-bounded on the event $B_{n,\tau,\delta}$. We use the mixing of indistinguishable distributions method previously used in proving hardness results for composite and hypothesis testing problems \citep{pan08p4750,bar89p107, kelwagtulvis13p782}. First, construct a collection of distributions so that for each distribution $\mu$, the likelihood ratio $\mu/\pip$ has a simple expression. Second, show that for any observations $\bfmz_1^n \eqdef\{z_1, \ldots, z_n\}$ in the event $B_n$, the \emph{average} of $\Prob_{\mu}\{\bfmZ_1^n=\bfmz_1^n\}/\Prob_{\pip}\{\bfmZ_1^n=\bfmz_1^n\}$ over the collection of distributions can be lower-bounded, which in turn lower-bounds the \emph{worst} case.  These distributions are obtained by taking the worst-case distribution $\mu^*$ given in \eqref{e:worstmu2a} and permuting the symbols in $[m]$. Let $\Kset$ denote the collection of all subsets of $[m]$ whose cardinality is $\lfloor m/2 \rfloor$. For each set $\mathcal{U} \in \Kset$, define the distribution $\mu_{\mathcal{U}}$ as
\begin{equation}
\mu_{\mathcal{U},j}=\left\{\begin{array}{l l}{1}/{m}+{\epsy}/{\lfloor m/2 \rfloor}, & j \in \mathcal{U};\\ {1}/{m}-{\epsy}/{\lceil m/2 \rceil}, & j \in [m]\setminus \mathcal{U}.
\end{array}\right.\label{e:muUalternative} 
\end{equation}
Then a lower-bound is obtained using \[\sup_{\mathcal{U} \in \Kset}\!\!\frac{\mu_{\mathcal{U}}^n}{\pip^n}(\{\!\phi_n\!=\!0\} \!\cap \!B_{n, \tau,\delta}) \!\geq \! \frac{1}{|\mathcal{U}|}\!\!\sum_{\mathcal{U} \in \Kset}\!\!\!\frac{\mu_{\mathcal{U}}^n}{\pip^n}(\{\!\phi_n\!=\!0\}\! \cap \!B_{n, \tau,\delta}).\]

The details are given in \Appendix{converse}.

This technique of using uniform lower-bounds on likelihood ratio (LR) to prove lower-bounds of probability of missed detection has been applied in \citep{pan08p4750,bar89p107}: In this prior work,  a uniform bound on LR is obtained \emph{over all possible} $\bfmz_1^n$. To prove the tight hardness result as in \Theorem{converse}, we expurgate the set of observations and only require the bound on LR to hold uniformly for the sequences in the event $B_n$ instead of all sequences. This gives us the freedom to optimize $B_n$ to obtain the tightest bound. 


\section{Extensions of the Coincidence-Based Test}\label{sec:coincidencetest}

This section collects together extensions of \Section{sec:mainresults} in terms of tests and models. We first propose a collection of tests that extend the coincidence-based test, and provide the freedom for fine-tuning the performance for finite samples. We then propose an extension of the coincidence-based test for non-uniform $\pip$. 

\subsection{Extensions considering symbols appearing more than once} \label{sec:extensioncoincidence}
The coincidence-based test uses only the number of symbols that appear in the sequence exactly once. We now add terms to the test statistic that also depend on the number of symbols appearing more than once to create a broader collection of tests. Conditions will be established under which these tests have optimal generalized error exponents. 
Consider the class of test statistics of the following form: For some $\bar{l} \geq 2$ and $v \in \Re^{\bar{l}}$, 
\begin{equation}\label{e:modifiedK}
\kstata=\kstat+ \sum_{l=2}^{\bar{l}} v_l \ind\{n\Gamma_j^n=l\}.
\end{equation}
The test is given by 
\[\ktesta(\bfmZ_1)=\ind\{\kstata-\Expect_{\pip}[\kstata] \geq \tau_n\}.\]
\begin{theorem}\label{t:modifiedK}
If $\bar{l}<\infty$, $v_2=0$, and $v_l \geq 0$ for all $3 \leq l \leq \bar{l}$, then the test $\ktesta$ achieves the optimal generalized error exponents given in \eqref{e:EEregion}.
\end{theorem}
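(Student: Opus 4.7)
The plan is to show that $\kstata$ and $\kstat$ have logarithmic moment generating functions that agree up to a $o(n^2/m)$ correction, both under $\pip$ and under the bi-uniform worst-case alternative distributions $\mu_{\mathcal U}$ of \eqref{e:muUalternative}. Once this is established, the Chernoff-bound calculations underlying the proof of \Theorem{KPERFORMANCE} (sketched at the end of \Section{sec:mainresults}) carry over verbatim from $\kstat$ to $\kstata$, so the same pair $\bigl(J_F^*(\tau),J_M^*(\tau)\bigr)$ of \eqref{e:EEregion} is achieved by $\ktesta$, and the matching upper bound is immediate from \Theorem{converse}.

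First I would write $\kstata - \kstat = \Delta_n$, where $\Delta_n \eqdef \sum_{l=3}^{\bar l} v_l N_l$ and $N_l \eqdef \sum_{j=1}^m \ind\{n\Gamma_j^n = l\}$, the $l=2$ contribution vanishing by hypothesis $v_2=0$. A direct binomial calculation gives
\[
\Expect_{\pip}[N_l] = m \binom{n}{l} m^{-l}\bigl(1-m^{-1}\bigr)^{n-l} = O\!\bigl(n^l/m^{l-1}\bigr),
\]
which is $o(n^2/m)$ for every $l\geq 3$ under \Assumption{largealphabet}. Up to a multiplicative constant, the same estimate holds under any $\mu \in \bmPmub$ because the likelihood ratios $\mu_j/\pip_j$ are bounded by $\mub$, so the mean perturbation introduced by $\Delta_n$ is negligible at the scale $n^2/m$.

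Second, I would control the cumulants of $\Delta_n$. Since $\bar l < \infty$ and each $N_l$ is a sum of bounded indicators with the mild dependence induced by the multinomial structure of $\{n\Gamma_j^n\}$, a direct moment estimate yields
\[
\log \Expect_{\pip}\!\bigl[e^{\theta (\Delta_n - \Expect_{\pip}[\Delta_n])}\bigr] = O\!\bigl(n^3/m^2\bigr) = o\!\bigl(n^2/m\bigr),
\]
uniformly for $\theta$ in a compact set. A Hölder/Cauchy--Schwarz splitting of $\Expect_{\pip}\!\bigl[e^{\theta(\kstata - \Expect[\kstata])}\bigr] = \Expect_{\pip}\!\bigl[e^{\theta(\kstat - \Expect[\kstat])} e^{\theta (\Delta_n - \Expect[\Delta_n])}\bigr]$ then shows that the leading-order MGF expansion coincides with the one quoted at the end of \Section{sec:mainresults} for $\kstat$. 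Repeating this argument under each $\mu_{\mathcal U}$ leaves the coefficient $\kinf(\smu)$ of \Theorem{ratefunction} untouched, because $\Delta_n$ still contributes only at order $o(n^2/m)$.

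Third, applying the Chernoff bound with threshold $\tau_n$ satisfying $\lim m\tau_n/n^2 = \tau$ and taking Legendre transforms yields the achievability lower bounds $J_F(\ktesta) \geq J_F^*(\tau)$ and $J_M(\ktesta) \geq J_M^*(\tau)$; the reverse inequalities are already supplied by \Theorem{converse}. The main obstacle will be transferring the worst-case-distribution reduction of \Theorem{converse} from $\kstat$ to $\kstata$ uniformly over the full alternative set $\mP$, not just the bounded-ratio subclass $\bmPmub$. This is where the sign hypothesis $v_l \geq 0$ plays its role: $\Delta_n$ is then a nonnegative perturbation, so the event $\{\kstata - \Expect[\kstata]\leq \tau_n\}$ is contained, up to a $o(n^2/m)$ shift of the threshold, in $\{\kstat - \Expect[\kstat]\leq \tau_n'\}$, which reduces the missed-detection analysis to the one already carried out for $\kstat$ and identifies the worst case as the bi-uniform distribution of Lemma~\ref{T:WORSTMUJSQUARE}. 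The conditions $\bar l < \infty$ and $v_2 = 0$ jointly guarantee that this perturbation is genuinely $o(n^2/m)$ rather than living at the leading scale where the generalized error exponents are defined.
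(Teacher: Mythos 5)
Your overall architecture is sound, and your treatment of the missed-detection side is essentially the paper's argument: since $v_2=0$ and $v_l\geq 0$ for $l\geq 3$, the perturbation $\Delta_n=\sum_{l\geq 3}v_lN_l$ is nonnegative, so $\kstata\geq\kstat$ pointwise and $\Prob_\mu\{\kstata\leq\tilde\tau_n\}\leq\Prob_\mu\{\kstat\leq\tilde\tau_n\}$ for every $\mu\in\mP$ (not just $\bmPmub$), which, combined with $\Expect_\pip[\kstata]-\Expect_\pip[\kstat]=O(n^3/m^2)=o(n^2/m)$ from \Lemma{separableexpectation}, reduces $J_M(\ktesta)\geq J_M^*(\tau)$ to \Theorem{KPERFORMANCE}. (Your framing of this as ``transferring the worst-case reduction of \Theorem{converse}'' is off --- the converse applies to any test and needs no transferring; what the monotonicity buys you is uniformity of the \emph{achievability} bound over all of $\mP$ --- but the substance is right.)

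The genuine gap is in the false-alarm half. Your plan rests on the claim that $\log\Expect_\pip\bigl[e^{\theta(\Delta_n-\Expect_\pip[\Delta_n])}\bigr]=O(n^3/m^2)$, justified only by ``a direct moment estimate.'' This bound is the entire difficulty, and elementary tools do not deliver it: a bounded-differences argument on $N_l$ as a function of the $n$ samples gives a cumulant bound of order $\theta^2 n$, and $n$ is \emph{not} $o(n^2/m)$ in the regime $n=o(m)$; the first-moment computation $\Expect_\pip[N_l]=O(n^l/m^{l-1})$ says nothing about exponential moments. The paper's Proposition~\ref{T:ASYMLMGFKstata} proves exactly this missing estimate, and it does so by rerunning the Poissonization/saddle-point computation of Proposition~\ref{T:ASYMLMGFK} for $\kstata$ directly: the extra terms $\sum_{l=2}^{\bar l}\frac{(\lambda_0\nu_j)^l}{l!}(e^{\theta v_l}-1)$ in each factor of $h(\psi)$ are $O(n^3/m^3)$ precisely because $v_2=0$ and $\bar l<\infty$, so $H(0)$, $H'(0)$, $H''$ are unchanged to the relevant order. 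Separately, even granting your cumulant claim, the Cauchy--Schwarz splicing $\Expect[e^{\theta(A+B)}]\leq\Expect[e^{2\theta A}]^{1/2}\Expect[e^{2\theta B}]^{1/2}$ doubles the argument of the leading (nonlinear in $\theta$) term $\half\frac{n^2}{m}(e^{2\theta}-1)$ and therefore does not reproduce $J_F^*(\tau)$ exactly; you would need H\"older with exponent $p\downarrow 1$ together with control of $\Expect[e^{q\theta\Delta_n}]$ for arbitrarily large $q$, which again forces you back to a saddle-point--type estimate for $\Delta_n$. In short: decompose-and-splice is a legitimate alternative skeleton, but the load-bearing exponential-moment bound for $\Delta_n$ must actually be proved, and the cleanest way to do so is the one the paper takes.
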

Its proof is given in \Appendix{extension}. 

The additional terms for $l\geq 3$ in the separable statistic give us ways to fine-tune the test for a better finite-sample performance. One interesting question is to obtain finer asymptotic approximations of $\log(P_F)$ and $\log(P_M)$ that provide guidance on how to select the weights $\{v_l\}$.

%
%


\subsection{Extensions to non-uniform $\pip$}
The coincidence-based test can be extended to the case where $\pip$ is not necessarily uniform but the likelihood ratio between $\pip$ and the uniform distribution remains bounded. 
\begin{assumption}\label{as:nonuniform}
There exists a constant $\eta>0$ such that 
$\max_j m\pip_j \leq \eta$ holds for all $n$.
\end{assumption}
The following separable statistic is considered,
\[
\tstat=\sum_{j=1}^m f_j(n\Gamma^n_j)
\]
with
 \begin{equation}
\label{e:ttestf}f_j(n\Gamma^n_j)=\left\{\begin{array}{c c}
 \half n^2 \pip_j^2, & n\Gamma^n_j=0,\\
 -n\pip_j, & n\Gamma^n_j=1,\\
 1, & n\Gamma^n_j=2,\\
0, & \textrm{others}.
 \end{array}\right.
 \end{equation}
The \emph{weighted coincidence-based test} is $\ttest_n=\ind\{\tstat \geq \tau_n\}$. 

The choice of coefficients given in \eqref{e:ttestf} ensures $\Expect_\nu[\tstat]$ approximates the $\ell_2$-distance between $\nu$ and $\pip$:
\begin{lemma}\label{T:EXPECTT}
For $\nu \in \bmPmub$, the expectation of $\tstat$ is given by:
\[\Expect_{\nu}[\tstat]=\frac{1}{2}\frac{n^2}{m} [ m \sum_{j=1}^m (\nu_j-\pip_j)^2]+O(\frac{n^3}{m^2}).\]
\end{lemma}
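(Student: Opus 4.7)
The plan is to compute $\Expect_\nu[\tstat]$ term by term, observing that under $\nu$ the count $N_j := n\Gamma^n_j$ has a $\mathrm{Binomial}(n,\nu_j)$ distribution, and then Taylor expand the resulting expressions in the small parameter $n\nu_j$.

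First, using linearity of expectation and the definition of $f_j$ in \eqref{e:ttestf},
\begin{equation*}
\Expect_\nu[\tstat] = \sum_{j=1}^m \Big[ \tfrac{1}{2}n^2\pip_j^2\,(1-\nu_j)^n - n\pip_j\cdot n\nu_j(1-\nu_j)^{n-1} + \tbinom{n}{2}\nu_j^2(1-\nu_j)^{n-2}\Big].
\end{equation*}
Because $\nu\in\bmPmub$ we have $\nu_j\le \mub/m$, and \Assumption{nonuniform} gives $\pip_j\le\eta/m$; combined with $n=o(m)$ this yields $n\nu_j=o(1)$ uniformly in $j$ for $n$ large. I will then use the expansions
\begin{equation*}
(1-\nu_j)^n = 1-n\nu_j+\tbinom{n}{2}\nu_j^2+E_j^{(n)},\quad (1-\nu_j)^{n-1}=1+O(n\nu_j),\quad (1-\nu_j)^{n-2}=1+O(n\nu_j),
\end{equation*}
where $|E_j^{(n)}|\le \sum_{k\ge 3}\binom{n}{k}\nu_j^k = O((n\nu_j)^3)=O(n^3/m^3)$.

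Substituting these expansions and collecting the terms of order $n^2/m$ (after summation), the leading contributions combine into
\begin{equation*}
\tfrac{1}{2}n^2\sum_j\pip_j^2 \;-\; n^2\sum_j \pip_j\nu_j \;+\; \tfrac{1}{2}n^2\sum_j\nu_j^2 \;=\; \tfrac{1}{2}n^2\sum_j(\nu_j-\pip_j)^2,
\end{equation*}
which is exactly $\tfrac{1}{2}(n^2/m)[m\sum_j(\nu_j-\pip_j)^2]$. The discrepancy between $\binom{n}{2}$ and $n^2/2$ contributes $-\tfrac{n}{2}\sum_j\nu_j^2=O(n/m)$, which under \Assumption{largealphabet} ($m=o(n^2)$) is dominated by $O(n^3/m^2)$.

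The remaining step is bookkeeping for the error. The cross terms have the form $n^3\sum_j\pip_j^2\nu_j$, $n^3\sum_j\pip_j\nu_j^2$, and the $E_j^{(n)}$ correction $n^2\sum_j\pip_j^2 E_j^{(n)}/\nu_j$-type quantities. Using the pointwise bounds $\pip_j,\nu_j\le \max\{\eta,\mub\}/m$ together with $\sum_j\nu_j=\sum_j\pip_j=1$, each such sum is at most $O(n^3/m^2)$. I would write these bounds out explicitly (e.g.\ $\sum_j\pip_j^2\nu_j\le (\eta/m)\sum_j\pip_j\nu_j\le \eta^2/m^2$). This is the only slightly tedious part; the main obstacle, such as it is, is just keeping track of which error terms are $o(n^2/m)$ versus $O(n^3/m^2)$, and verifying that every cross term and every tail $E_j^{(n)}$ fits inside the claimed $O(n^3/m^2)$ bound uniformly for $\nu\in\bmPmub$.
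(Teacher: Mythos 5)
Your computation is correct and is essentially the paper's own route: the paper derives this lemma from the general Lemma~\ref{t:separableexpectation} on separable statistics, whose proof is precisely the binomial-pmf expansion and tail truncation you carry out directly for the specific coefficients in \eqref{e:ttestf}, followed by the same cross-term bookkeeping (e.g.\ $n^3\sum_j\nu_j\pip_j^2=O(n^3/m^2)$ and $\tbinom{n}{2}-\tfrac{n^2}{2}$ contributing $O(n/m)=o(n^3/m^2)$ under $m=o(n^2)$). No gap; only the phrase ``$n^2\sum_j\pip_j^2E_j^{(n)}/\nu_j$-type'' is a slip --- the relevant remainder is simply $\tfrac12 n^2\sum_j\pip_j^2E_j^{(n)}=O(n^5/m^4)=o(n^3/m^2)$, with no division by $\nu_j$.
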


The proposed test has nonzero generalized error exponents: 
\begin{theorem}\label{T:TBOUND}
Suppose \Assumption{largealphabet} and \Assumption{nonuniform} hold. For $\tau \in (0, 2\epsy^2)$ where $\tau$ is defined in \eqref{e:eqdeftau}, the test $\ttest$ has nonzero generalized error exponents:
\[J_F(\ttest) >0, \quad J_M(\ttest) >0.\]
\end{theorem}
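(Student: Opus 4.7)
The plan is to apply the Chernoff bound to $\tstat$ under $\pip$ (for $P_F$) and under each $\mu \in \mP$ (for $P_M$), aiming to show both probabilities decay like $\exp(-c\, n^2/m)$ for some $c > 0$. Two ingredients drive the argument. First, Lemma~\ref{T:EXPECTT} provides a macroscopic separation of the two means at scale $n^2/m$: under $\pip$, $\Expect_\pip[\tstat] = O(n^3/m^2) = o(n^2/m)$, and for $\mu \in \mP \cap \bmPmub$, Cauchy--Schwarz yields $m\sum_j (\mu_j - \pip_j)^2 \geq (\sum_j |\mu_j - \pip_j|)^2 = 4\dtv(\mu, \pip)^2 \geq 4\epsy^2$, so $\Expect_\mu[\tstat] \geq 2\epsy^2\, n^2/m + o(n^2/m)$. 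With $\tau_n$ satisfying $m\tau_n/n^2 \to \tau \in (0, 2\epsy^2)$, the threshold lies strictly between the two means at the correct order.

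The second ingredient is exponential concentration of $\tstat$ around its mean at scale $n^2/m$. To handle the multinomial dependence of the counts $\{n\Gamma_j^n\}$, I would Poissonize: replace $n\Gamma_j^n$ by independent Poissons $\widetilde N_j \sim \mathrm{Poisson}(n\mu_j)$ and condition on $\sum_j \widetilde N_j = n$. Standard de-Poissonization gives $\Expect_\mu[e^{\pm\theta \tstat}] \leq C\sqrt{n}\,\Expect[e^{\pm\theta \widetilde S_n^{\sf W}}]$, where $\widetilde S_n^{\sf W} = \sum_j f_j(\widetilde N_j)$ and the $\sqrt{n}$ prefactor is negligible on the $\log$ scale at rate $n^2/m$. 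By independence,
\[
\log\Expect[e^{\theta \widetilde S_n^{\sf W}}] = \sum_{j=1}^m \log\Expect[e^{\theta f_j(\widetilde N_j)}],
\]
and Taylor expansion in the small parameters $n\pip_j = O(n/m)$ and $n\mu_j = O(n/m)$ (both $o(1)$ when $\mu \in \bmPmub$), together with $f_j(0) = \half n^2 \pip_j^2 = O((n/m)^2)$, yields
\[
\log\Expect[e^{\theta \widetilde S_n^{\sf W}}] = \theta\,\Expect_\mu[\tstat] + O(\theta^2\, n^2/m) + o(n^2/m),
\]
where the $O(\theta^2 n^2/m)$ term collects per-cell variance-like contributions summed across $m$ cells. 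The Chernoff bound, optimized over small $\theta > 0$ together with the mean separation, then gives $P_F \leq \exp(-c_F\, n^2/m)$ and $P_{M,\mu} \leq \exp(-c_M\, n^2/m)$ with $c_F, c_M > 0$ for every $\mu \in \mP \cap \bmPmub$, provided $\tau \in (0, 2\epsy^2)$.

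The main obstacle is disposing of $\mu \in \mP$ with unbounded likelihood ratio to $\pip$, since both Lemma~\ref{T:EXPECTT} and the Taylor expansion above require $\mu \in \bmPmub$. Such a $\mu$ concentrates mass on few cells and thereby forces many zero-count cells, which only drives $\tstat$ further upward through the $\half n^2 \pip_j^2$ contributions. I expect to dispose of this case either by a deterministic lower bound $\tstat \geq \half \sum_{j: n\Gamma_j^n = 0} n^2 \pip_j^2$ combined with a concentration estimate on the number of zero-count cells under such $\mu$, or by decomposing $\mu = \alpha \mu' + (1 - \alpha)\mu''$ into a bounded-ratio part $\mu'$ and a concentrated remainder $\mu''$ and analyzing the two parts separately. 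Either approach delivers uniformity in $\mu \in \mP$, completing the claim $J_M(\ttest) > 0$.
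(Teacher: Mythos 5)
Your overall strategy---Chernoff bound plus Poissonization, with the mean separation $m\sum_j(\mu_j-\pip_j)^2\geq 4\epsy^2$ from Cauchy--Schwarz and the threshold $\tau\in(0,2\epsy^2)$ sitting strictly between the two means---is exactly the paper's strategy for Theorem~\ref{T:TBOUND}, whose bounded-ratio input is \Proposition{asymlmgfT}. But there are two genuine gaps. First, your de-Poissonization step $\Expect_\mu[e^{\theta\tstat}]\leq C\sqrt{n}\,\Expect[e^{\theta\widetilde S}]$ leaves a $\half\log n$ additive error in the log-MGF, and your claim that this is negligible at rate $n^2/m$ requires $\log n=o(n^2/m)$, i.e.\ $m=o(n^2/\log n)$. \Assumption{largealphabet} only gives $m=o(n^2)$, so $n^2/m$ may grow more slowly than $\log n$ (take $m=n^2/\log\log n$), and the prefactor is then not negligible. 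The paper avoids this by evaluating the Cauchy integral of \Lemma{poissonization} with a genuine Laplace/saddle-point estimate, which recovers a $1/\sqrt{n}$ factor cancelling Stirling's $\sqrt{n}$ and leaves only an $O(1)$ additive error in the log-MGF.

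Second, and more seriously, the case $\mu\in\mP\setminus\bmPmub$ is where most of the work lies, and you leave it as a sketch with two candidate strategies, one of which fails as stated: the bound $\tstat\geq\half\sum_{j:n\Gamma_j^n=0}n^2\pip_j^2$ is false, because $\tstat$ also contains the negative singleton terms $-\sum_{j:n\Gamma_j^n=1}n\pip_j$, which under \Assumption{nonuniform} can be as large in magnitude as $\eta\,n^2/m$---the same order as the positive zero-count contribution and as the threshold---so they cannot be discarded. The paper handles this case with \Proposition{upper-boundapproximatelmgfT} and \Proposition{upper-boundapproximatelmgfT2}: it splits $\mu$ according to the mass $\beta(\mu)$ carried by cells with $n\mu_j\geq\eta$; when $\beta(\mu)$ is bounded away from zero the log-MGF is at most $-\beta(\mu)\alpha'(\theta)n$, so the missed-detection probability decays at the much faster rate $n$, and when $\beta(\mu)$ is small the bounded-ratio expansion survives restricted to the light cells. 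Your decomposition idea points in this direction, but without these estimates---which constitute the bulk of the paper's proof---the required uniformity over $\mP$ is not established.
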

Its proof is given in \Appendix{extension}. 
 
\section{Pearson's Chi-Square Test}\label{sec:pearson}
In this section, we investigate the performance of Pearson's chi-square test given in \eqref{e:defpearson}. We find that this test has a zero generalized error exponent, and therefore its probability of error is asymptotically larger than that of the coincidence-based test.

Pearson's chi-square test is asymptotically consistent in the small sample case:
\begin{proposition}[Asymptotic consistency]\label{t:consistentpearson}
Under \Assumption{largealphabet}, there exists a sequence of thresholds $\{\tau_n\}$, with which the Pearson's chi-square test is asymptotically consistent:
\[\lim_{n \rightarrow \infty}P_F(\ptest_n)=0, \quad \lim_{n \rightarrow \infty}P_M(\ptest_n)=0.\]
\end{proposition}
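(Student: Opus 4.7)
My plan is to prove consistency via a Chebyshev argument after rewriting $\pearson$ as a pairwise coincidence count. Because $\pip$ is uniform on $[m]$, expanding the squares in \eqref{e:defpearson} and using $\sum_j n\Gamma^n_j=n$ yields
\[
\pearson \;=\; \sum_{j=1}^m (n\Gamma^n_j - n/m)^2 \;=\; n - \frac{n^2}{m} + 2C_n, \qquad C_n \eqdef \sum_{i<i'}\ind\{Z_i=Z_{i'}\}.
\]
Hence $\Expect_\nu[C_n]=\binom{n}{2}\sigma^2_\nu$ with $\sigma^2_\nu\eqdef \sum_j \nu_j^2 = 1/m + \|\nu-\pip\|_2^2$, and Cauchy--Schwarz combined with $\dtv(\mu,\pip)\ge\epsy$ (equivalently $\|\mu-\pip\|_1\ge 2\epsy$) forces $\|\mu-\pip\|_2^2\ge 4\epsy^2/m$ for every $\mu\in\mP$. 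The resulting first-moment gap
\[
\Expect_\mu[\pearson] - \Expect_\pip[\pearson] \;=\; 2\binom{n}{2}\|\mu-\pip\|_2^2 \;\ge\; 4\binom{n}{2}\epsy^2/m
\]
is uniformly of order $n^2/m$. I would place the threshold at the midpoint $\tau_n=\Expect_\pip[\pearson]+2n(n-1)\epsy^2/m$, so that both $\tau_n-\Expect_\pip[\pearson]$ and $\Expect_\mu[\pearson]-\tau_n$ are bounded below by $2n(n-1)\epsy^2/m$ uniformly in $\mu\in\mP$.

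The central calculation is a uniform bound on $\mathrm{Var}_\nu(C_n)$. Expanding into covariances of the indicators $\ind\{Z_i=Z_{i'}\}$ and observing that only pairs of index-pairs sharing exactly one sample contribute, one obtains
\[
\mathrm{Var}_\nu(C_n) \;\le\; \binom{n}{2}\sigma^2_\nu + n(n-1)(n-2)\sum_j\nu_j^3 \;\le\; \half n^2\sigma^2_\nu + n^3(\sigma^2_\nu)^{3/2},
\]
where the last step uses the elementary inequality $\sum_j\nu_j^3\le\|\nu\|_\infty\sigma^2_\nu\le(\sigma^2_\nu)^{3/2}$ (from $\|\nu\|_\infty^2\le\sigma^2_\nu$). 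For the null, $\sigma^2_\pip=1/m$, so $\mathrm{Var}_\pip(\pearson)=O(n^2/m)$ and Chebyshev's inequality gives $P_F(\ptest_n)\le O(m/n^2)=o(1)$ under \Assumption{largealphabet}. For $P_M(\ptest_n)$ I would split on the magnitude of $\delta^2_\mu\eqdef \|\mu-\pip\|_2^2$: in the regime $\delta^2_\mu=O(1/m)$ the bound mirrors the one used for $P_F$, while in the regime $\delta^2_\mu>1/m$ one has $\sigma^2_\mu\le 2\delta^2_\mu$ and the Chebyshev ratio $\mathrm{Var}_\mu(\pearson)/(\Expect_\mu[\pearson]-\tau_n)^2$ is controlled by $O\bigl(1/(n^2\delta^2_\mu)+1/(n\delta_\mu)\bigr)$, both terms vanishing uniformly because $n\delta_\mu\ge 2n\epsy/\sqrt{m}\to\infty$ when $m=o(n^2)$.

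The main obstacle is the \emph{uniform} variance estimate over $\mP$: the naive bound $\sum_j\nu_j^3\le\sigma^2_\nu$ would produce an $n^3\sigma^2_\nu$ contribution too large to be dominated by the squared gap $(n^2\delta^2_\mu)^2$ precisely when $\delta^2_\mu$ is close to its lower limit $4\epsy^2/m$. Replacing that estimate by the sharper $(\sigma^2_\nu)^{3/2}$, which exploits $\|\nu\|_\infty\le\sqrt{\sigma^2_\nu}$, is what makes Chebyshev succeed across the entire range $m=o(n^2)$ permitted by \Assumption{largealphabet}; the remainder of the argument is bookkeeping.
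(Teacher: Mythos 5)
Your argument is essentially correct, and for the missed-detection half it takes a genuinely different route from the paper. The paper also handles $P_F$ by Chebyshev (using its general moment lemmas for separable statistics, \Lemma{separableexpectation} and \Lemma{separablevariance}), but for $P_M$ it does \emph{not} run a second-moment argument over $\mP$: it instead couples $\pearson$ with the coincidence statistic via the pointwise inequality $\pearson \geq 2n+\kstat-n^2/m$, so that $\{\pearson\le\tau_n\}\subseteq\{\kstat\le\tau_n-2n+n^2/m\}$, and then invokes the (much heavier) achievability result \Theorem{KPERFORMANCE} to conclude that the coincidence test, and hence Pearson's test, is consistent. Your proof replaces that machinery with the collision-count representation $\pearson=n-n^2/m+2C_n$ and a direct, self-contained Chebyshev bound uniform over $\mu\in\mP$; you correctly identify the crux, namely that the naive bound $\sum_j\nu_j^3\le\sigma^2_\nu$ on the triple-collision term fails near the extremal alternatives with $\|\mu-\pip\|_2^2\asymp\epsy^2/m$, whereas $\sum_j\nu_j^3\le\|\nu\|_\infty\sigma^2_\nu\le(\sigma^2_\nu)^{3/2}$ yields Chebyshev ratios $O(1/(n^2\delta^2_\mu))+O(1/(n\delta_\mu))=O(m/n^2)+O(\sqrt m/n)$, both $o(1)$ under \Assumption{largealphabet}. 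What your route buys is independence from \Theorem{KPERFORMANCE} (the paper's proof of consistency is logically downstream of the full large-deviations analysis); what the paper's route buys is the structural link between $\pearson$ and $\kstat$ that it reuses in \Section{sec:pearson}.

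One small slip: you call $\tau_n=\Expect_\pip[\pearson]+2n(n-1)\epsy^2/m$ the ``midpoint,'' but since the guaranteed mean gap is exactly $4\binom{n}{2}\epsy^2/m=2n(n-1)\epsy^2/m$, that choice leaves $\Expect_\mu[\pearson]-\tau_n\ge 0$ only. Take $\tau_n=\Expect_\pip[\pearson]+n(n-1)\epsy^2/m$ (the true midpoint); then $\Expect_\mu[\pearson]-\tau_n\ge n(n-1)\delta^2_\mu-n(n-1)\epsy^2/m\ge\tfrac34 n(n-1)\delta^2_\mu$ using $\epsy^2/m\le\delta^2_\mu/4$, and the rest of your bookkeeping goes through unchanged.
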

We give a proof that highlights the relationship between Pearson's chi-square test and  the coincidence-based test. 
\begin{proof}[Proof of \Proposition{consistentpearson}]
Let $\tau_n=n+\half\frac{n^2}{m}(\taul(\epsy)-1)$. Applying approximations of moments of separable statistic given in \Lemma{separableexpectation} and \Lemma{separablevariance}, we obtain 
\begin{equation} \begin{aligned}\label{e:pearsonexpectation}
\Expect_{\pip}[\pearson]\!&=\!n\!+\!O(\frac{n^3}{m^2}), \\
\Var_{\pip}[\pearson]\!&=\!2\frac{n^2}{m}(m\sum_{j=1}^m\pip_j^2)(1+o(1)).\!\end{aligned}
\end{equation}
Applying Chebyshev's inequality gives $\lim_{n \rightarrow \infty}P_F(\ptest_n)=0$.

We bound $P_M(\ptest_n)$ by coupling Pearson's chi-square statistic $\pearson$ with the coincidence-based test statistic $\kstat$:
\[\begin{aligned}
\pearson&=\sum_{j=1}^m (n\Gamma^n_j-n\pip_j)^2=\sum_{j=1}^m (n\Gamma^n_j)^2-\frac{n^2}{m}\\&\geq 2\sum_{j=1}^n \ind\{n\Gamma^n_j\geq 2\}n\Gamma^n_j+ \sum_{j=1}^m\ind\{n\Gamma^n_j=1\}-\frac{n^2}{m}\\&=2n+\kstat-\frac{n^2}{m},\end{aligned}\]
where the inequality follows from $(n\Gamma^n_j)^2\geq 2(n\Gamma^n_j)$ when $n\Gamma^n_j > 1$. Consequently,
\begin{equation}\label{e:couplingKpearson}
\{\pearson \leq \tau_n\} \subseteq \{\kstat \leq \tau_n-2n+\frac{n^2}{m}\}.
\end{equation}
The asymptotic approximation on the expectation of $\kstat$ obtained from \Lemma{separableexpectation} gives \[\tau_n-2n+\frac{n^2}{m}=\Expect_{\pip}[\kstat]+\frac{1}{2}\frac{n^2}{m}(\taul(\epsy)-1)+O(\frac{n^3}{m^2}).\] It follows from \Theorem{KPERFORMANCE} that the coincidence-based test is asymptotically consistent. Thus
\[\lim_{n \rightarrow \infty}\sup_{ \mu \in \mP}\Prob_\mu\{\kstat \leq \tau_n-2n+\frac{n^2}{m}\}=0.\] Applying \eqref{e:couplingKpearson}, we obtain 
\[\lim_{n \rightarrow \infty}\sup_{ \mu \in \mP}\Prob_\mu\{\pearson \leq \tau_n\}=0.\]
\end{proof}

However, the probability of false alarm of Pearson's chi-square test is asymptotically larger than that of the coincidence-based test: We show that its generalized error exponent of false alarm is zero: 
\begin{theorem}\label{t:pearsonerrorexponent}
Suppose \Assumption{largealphabet} hold. 
Assume in addition that $m=o(n^2/\log(n)^2)$. If the sequence of thresholds is chosen so that
\begin{equation}\label{e:PMpearson}
\lim_{n \rightarrow \infty} P_M(\ptest_n) =0,
\end{equation}
then the generalized error exponent of false alarm is zero, i.e., 
\begin{equation}\label{e:JFpearson}
J_F(\ptest) =0.
\end{equation}
\end{theorem}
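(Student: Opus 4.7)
The plan is to show $P_F(\ptest_n)=\exp\{-o(r)\}$ by constructing a cheap event under $\pip$ that already forces rejection, rather than analyzing the tail of $\pearson$ directly. Concretely, I will exhibit a single-cell ``heavy count'' event whose null probability decays only as $\exp\{-O(\sqrt{r}\,\log m)\}$, and the hypothesis $m=o(n^2/\log^2 n)$ is precisely what makes $\sqrt r\,\log m = o(r)$.

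First I would extract an upper bound on $\tau_n$ from the constraint $P_M\to 0$. Applying this constraint to the bi-uniform worst-case distribution $\mu^*$ from Lemma~\ref{T:WORSTMUJSQUARE}, and using the first-moment expansion of separable statistics already invoked in the proof of \Proposition{consistentpearson} (via \Lemma{separableexpectation}), one obtains $\Expect_{\mu^*}[\pearson]=n+(\taul(\epsy)-1)r(1+o(1))$ with $\Var_{\mu^*}[\pearson]=O(r)$. Since $\Prob_{\mu^*}(\pearson<\tau_n)\to 0$ is necessary for $P_M\to 0$, this forces
\[
\tau_n \leq n+(\taul(\epsy)-1)\,r\,(1+o(1)).
\]

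Next, writing $N_j=n\Gamma_j^n$, the bound $N_j^2\geq N_j$ together with $\sum_j N_j=n$ gives, for any index $j_0$,
\[
\pearson = \sum_{j=1}^m N_j^2 - \tfrac{n^2}{m} \;\geq\; N_{j_0}(N_{j_0}-1) + n - \tfrac{n^2}{m}.
\]
Setting $k_n=\lceil\sqrt{\taul(\epsy)\,r}\rceil+1$, the event $B_n\eqdef\{\max_j N_j\geq k_n\}$ yields $\pearson\geq \taul(\epsy)\,r + n - r \geq \tau_n$ for all large $n$, so $P_F\geq \Prob_{\pip}(B_n)$. By Bonferroni plus the negative association of multinomial marginals (so that $\Prob_{\pip}(N_j\geq k_n,\,N_{j'}\geq k_n)\leq \Prob_{\pip}(N_1\geq k_n)^2$), one has $\Prob_{\pip}(B_n)\geq\tfrac12\, m\,\Prob_{\pip}(N_1\geq k_n)$ once $m\Prob_{\pip}(N_1\geq k_n)\leq 1$. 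A Stirling expansion of $\Prob_{\pip}(N_1=k_n)=\binom{n}{k_n}m^{-k_n}(1-1/m)^{n-k_n}$ at the scale $k_n=\Theta(\sqrt r)=\Theta(n/\sqrt m)$ simplifies $\log(n/(k_n m))$ to $-\tfrac12\log m + O(1)$, whence $\log\Prob_{\pip}(N_1=k_n) = -\tfrac{k_n}{2}\log m\,(1+o(1))$ and $-\log\Prob_{\pip}(B_n) = O(\sqrt r\,\log m)$.

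Finally, $m=o(n^2/\log^2 n)$ combined with $\log m\leq 2\log n + O(1)$ (from $m=o(n^2)$) yields $(\log m)^2/r\to 0$, equivalently $\sqrt r\,\log m = o(r)$. Hence $-\log P_F = o(r)$, giving $J_F(\ptest)\leq 0$; combined with the nonnegativity of the error exponent this is the claim $J_F(\ptest)=0$. The main obstacle is the Stirling step: one must track $\log\Prob_{\pip}(N_1=k_n)$ precisely enough at the scale $k_n\asymp\sqrt r$ to identify $-(k_n/2)\log m$ as the dominant term and confirm that the $O(\log n)$ remainders really are lower order. The hypothesis $m=o(n^2/\log^2 n)$ is tight for this route: any weaker condition would allow $\sqrt r\,\log m$ to compete with $r$, making the single-cell event itself exponentially rare on the $r$ scale and closing off the argument.
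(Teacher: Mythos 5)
Your proposal follows essentially the same route as the paper: bound $\tau_n$ from above via Chebyshev under the worst-case bi-uniform alternative, exhibit a single heavy cell with $\Theta(n/\sqrt{m})$ counts that forces rejection, and show via Stirling that this event has probability $\exp\{-\Theta((n/\sqrt{m})\log m)\}=\exp\{-o(n^2/m)\}$ precisely under $m=o(n^2/\log^2 n)$. The one genuine (if minor) difference is that you force $\pearson\geq\tau_n$ deterministically on the heavy-cell event via $\sum_j N_j^2\geq N_{j_0}^2-N_{j_0}+n$, whereas the paper conditions on $\{n\Gamma_1^n=k\}$ and runs a second Chebyshev argument (its Lemma~\ref{T:PEARSONA}); your version is a clean simplification that removes that lemma. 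One arithmetic slip to fix: $\Expect_{\mu^*}[\pearson]=n+\taul(\epsy)\frac{n^2}{m}(1+o(1))$, since the mean shift is governed by $m\sum_j(\mu^*_j-\pip_j)^2=\taul(\epsy)$ rather than $\taul(\epsy)-1$; hence admissible thresholds reach $n+\taul(\epsy)\,r+o(r)$ and you must take $k_n\gtrsim\sqrt{(1+\taul(\epsy))\,r}$ for the heavy cell to clear them --- a constant-factor change that leaves the probability estimate, which only uses $k_n=\Theta(\sqrt{r})$, and hence the conclusion, intact.
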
 
We conjecture that the conclusion holds without the assumption $m=o(n^2/\log(n)^2)$.

Now compare Pearson's chi-square test and the coincidence-based test. Pearson's chi-square test statistic can be written as 
\begin{equation}\label{e:pearsonexpansion}
\begin{aligned}
\pearson=&-\frac{n^2}{m}+\sum_{j=1}^m \ind\{n\Gamma_j^n=1\}+\sum_{j=1}^m 4\ind\{n\Gamma_j^n=2\}\\&+\sum_{l=3}^\infty \sum_{j=1}^m l^2\ind\{n\Gamma_j^n=l\}.
\end{aligned}
\end{equation}
The main difference between these two tests are how the coefficients of $\ind\{n\Gamma_j^n=l\}$ for $l\geq 2$ are chosen: Remove all the terms corresponding to $l \geq 3$ and consider the following separable statistic:
\begin{equation}\label{e:pearsonchisquaremodified}
\pstata=-\frac{n^2}{m}+\sum_{j=1}^m \ind\{n\Gamma_j^n=1\}+\sum_{j=1}^m 4\ind\{n\Gamma_j^n=2\}.
\end{equation}
Then we have the following relationship between these three test statistics:
\begin{equation}
{\Omega}^{\sf P} \!\eqdef\{\pearson \leq \ctau_n\}\subset {\Omega}^{*}\!\eqdef\{\kstat \leq \tau_n\} \subset {\Omega}^{\sf P0}\!\eqdef\{\pstata \leq \ctau_n\}\nonumber
\end{equation}
where  the thresholds $\tau_n$ and $\ctau_n$ satisfy $\ctau_n=\tau_n+2n-\frac{n^2}{m}$. This is depicted in \Figure{fig:TestGeo}. Note that the region which Pearson's chi-square test decides in favor of $\Ha$ is larger than the coincidence-based test, and the probability that the empirical distribution fall into this region is asymptotically larger than $\exp\{-\alpha n^2/m\}$ for any $\alpha>0$. This is made precise in the proof of \Theorem{pearsonerrorexponent}. On the other hand, we can show  that the test associated with $\ptesta$ has $J_M=0$ by considering a sequence of alternative distributions whose likelihood ratios with respect to $\pip$ increase to infinity. In sum, we have
\begin{enumerate}
\item $J_F(\ptest)=0, J_M(\ptest)>0$;
\item $J_F(\ktest)>0, J_M(\ktest)>0$;
\item $J_F(\ptesta)>0, J_M(\ptesta)=0$.
\end{enumerate}
\begin{figure}[h!]
\centering
\includegraphics[width=0.25\textwidth]{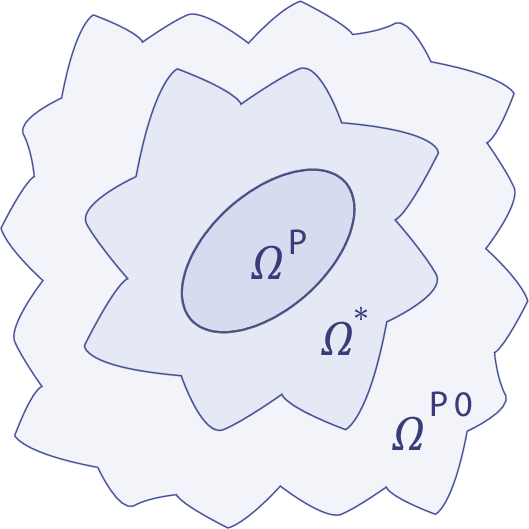}
\caption{ 
Decision regions in the space of p.m.f. for Pearson's chi-square test, the coincidence-based test and the test given in \eqref{e:pearsonchisquaremodified}.
} 
\vspace{0mm}
\label{fig:TestGeo}
\end{figure}

\begin{proof}[Proof of \Theorem{pearsonerrorexponent}]
The requirement $P_M(\ptest_n) \rightarrow 0$ imposes an upper-bound on the threshold $\tau_n$ for $\ptest$:
\begin{lemma}\label{T:PEARSONTAUBD}
In order for \eqref{e:PMpearson} to hold, for large enough $n$, we must have
\begin{equation}
\tau_n\leq \bar{\tau}_n \eqdef \Expect_{\pip}[\pearson]+\frac{n^2}{m}\taul(\epsy)+2\frac{n}{\sqrt{m}}.\nonumber
\end{equation}
\end{lemma}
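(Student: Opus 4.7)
The plan is to prove the contrapositive: assuming $\tau_n > \bar{\tau}_n$ for infinitely many $n$, I will produce a distribution $\mu \in \mP$ for which $\Prob_\mu\{\pearson < \tau_n\} \to 1$ along that subsequence, contradicting \eqref{e:PMpearson}. The natural candidate is the bi-uniform worst-case distribution $\mu^*$ from Lemma~\ref{T:WORSTMUJSQUARE}. Indeed, rewriting $\pearson = \sum_{j=1}^m N_j^2 - n^2/m$ with $N_j = n\Gamma_j^n$, the multinomial identity $\Expect_\mu[N_j^2] = n\mu_j(1-\mu_j) + n^2\mu_j^2$ gives
\[
\Expect_\mu[\pearson] = n + (n^2 - n)\sum_{j=1}^m \mu_j^2 - \frac{n^2}{m},
\]
which is monotone increasing in $\sum_j \mu_j^2$; hence $\mu^*$, which minimizes $\sum_j \mu_j^2$ over $\mP$, also minimizes $\Expect_\mu[\pearson]$, and $\mu^* \in \bmPmub$ so that the separable-statistic moment asymptotics from the appendix apply.

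First I would compute the mean gap. Substituting $m\sum_j(\mu^*_j)^2 = \taul(\epsy)(1+o(1))$ and the explicit $\Expect_\pip[\pearson] = n-n/m$ into the display above yields, after elementary algebra,
\[
\bar\tau_n - \Expect_{\mu^*}[\pearson] = \frac{n^2}{m} + \frac{2n}{\sqrt{m}} + O\!\Big(\frac{n}{m}\Big) = \frac{n^2}{m}(1+o(1)),
\]
the second equality using $\sqrt{m} = o(n)$ from Assumption~\ref{as:largealphabet}. Whenever $\tau_n > \bar\tau_n$, the gap $\tau_n - \Expect_{\mu^*}[\pearson]$ is therefore at least $(n^2/m)(1+o(1))$.

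Next I would bound the variance under $\mu^*$. The key reformulation is $\sum_j N_j^2 = n + 2P$, where $P = \sum_{i<k}\ind\{Z_i = Z_k\}$ counts pairwise coincidences among the samples, so that $\Var_{\mu^*}[\pearson] = 4\Var_{\mu^*}[P]$. Under $\mu^*$ each coincidence indicator has mean $\sum_j(\mu^*_j)^2 = \taul/m$; pairs sharing exactly one index have covariance of order $1/m^2$, while disjoint pairs are independent; summing the $\binom{n}{2}$ variance terms and the $O(n^3)$ single-overlap covariances then produces $\Var_{\mu^*}[\pearson] = 2\taul(n^2/m)(1+o(1))$. Chebyshev's inequality delivers
\[
\Prob_{\mu^*}\{\pearson \geq \tau_n\} \leq \frac{\Var_{\mu^*}[\pearson]}{(\tau_n - \Expect_{\mu^*}[\pearson])^2} = O\!\Big(\frac{m}{n^2}\Big) \to 0,
\]
the limit being zero by Assumption~\ref{as:largealphabet} ($m = o(n^2)$). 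Hence $P_M(\ptest_n) \geq \Prob_{\mu^*}\{\pearson < \tau_n\} \to 1$, the desired contradiction.

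The hardest step is the sharp variance bound $\Var_{\mu^*}[\pearson] = O(n^2/m)$: a naive Poisson approximation of the individual counts $N_j$ would suggest the much weaker $\Var \approx n$, under which Chebyshev would not suffice in the $n = o(m)$ regime. It is the negative correlation of multinomial counts, captured cleanly through the coincidence-count rewriting, that removes this leading $n$ term and leaves only the $2\taul n^2/m$ contribution. Everything else, including the evaluation of $\sum_j(\mu^*_j)^2 = \taul(\epsy)/m$ directly from the explicit two-cell structure of $\mu^*$ in \eqref{e:worstmu2a}--\eqref{e:worstmu1}, is routine.
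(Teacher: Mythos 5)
Your proof is correct and follows essentially the same route as the paper's: both evaluate the mean and variance of $\pearson$ under the worst-case bi-uniform distribution $\mu^*$ of Lemma~\ref{T:WORSTMUJSQUARE} and then apply Chebyshev's inequality, yours merely phrased as a contrapositive. The only difference is that you derive the two moments from first principles — the multinomial second-moment identity and the pairwise-coincidence rewriting $\sum_{j}(n\Gamma^n_j)^2 = n + 2\sum_{i<k}\ind\{Z_i=Z_k\}$, which is indeed what eliminates the spurious order-$n$ variance term — whereas the paper simply invokes \Lemma{separableexpectation} and \Lemma{separablevariance} from the appendix to obtain the same $O(n^2/m)$ bounds.
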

Consider the event that the first symbol appears many times:
\[
A_n\eqdef \{n\Gamma^n_1=\lfloor \frac{n\sqrt{2\taul(\epsy)}}{\sqrt{m}}\rfloor\}.
\]
In the event $A_n$, the first term $f_1(n\Gamma^n_1)$ in the summation in the definition of $\pearson$ given in \eqref{e:defpearson} is approximately $2\frac{n^2}{m}\taul(\epsy)$. This drives the value of $\pearson$ above the threshold $\tau_n$. Thus the probability of false alarm conditioned on this event converges to \emph{one}, as summarized in Lemma~\ref{T:PEARSONA}. On the other hand, the probability of $A_n$ does not decay exponentially fast with respect to $n^2/m$, as summarized in Lemma~\ref{T:PEARSONA2}. 

\begin{lemma}\label{T:PEARSONA}
\[
\Prob_\pip \{\pearson \geq \bar{\tau}_n|A_n\}=1-o(1).\]
\end{lemma}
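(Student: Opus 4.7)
The plan is to decompose $\pearson$ on the event $A_n$ into the contribution of the first cell plus the contribution of the remaining cells, show that the first cell alone pushes the conditional mean safely above $\bar\tau_n$, and then control the residual via Chebyshev's inequality.

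First I would rewrite Pearson's statistic as
\[
\pearson \;=\; \sum_{j=1}^m (n\Gamma^n_j - n/m)^2 \;=\; (n\Gamma^n_1 - n/m)^2 + \widetilde S,
\qquad
\widetilde S \eqdef \sum_{j=2}^m (n\Gamma^n_j - n/m)^2.
\]
On $A_n$ we have $n\Gamma^n_1 = K_n \eqdef \lfloor n\sqrt{2\taul(\epsy)}/\sqrt{m}\rfloor$, so
\[
(n\Gamma^n_1 - n/m)^2 \;=\; K_n^{\,2} - 2K_n (n/m) + (n/m)^2 \;=\; \tfrac{2n^2}{m}\taul(\epsy) \;-\; O\!\bigl(n/\sqrt m\bigr),
\]
using $K_n = n\sqrt{2\taul(\epsy)/m}+O(1)$ and \Assumption{largealphabet}. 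Combined with $\Expect_{\pip}[\pearson]=n+O(n^3/m^2)$ from \eqref{e:pearsonexpectation}, this gives
\[
\bar\tau_n - (n\Gamma^n_1-n/m)^2 \;=\; n \;-\; \tfrac{n^2}{m}\taul(\epsy) \;+\; O\!\bigl(n/\sqrt m\bigr),
\]
so showing $\pearson\ge\bar\tau_n$ amounts to showing that $\widetilde S$ is at least $n-\tfrac{n^2}{m}\taul(\epsy)(1+o(1))$ with high probability given $A_n$.

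Next I would use the standard fact that, conditional on $A_n$, the vector $(n\Gamma^n_2,\dots,n\Gamma^n_m)$ is multinomial with $n-K_n$ trials and uniform cell probability $1/(m-1)$. A direct second-moment computation along the lines of \Lemma{separableexpectation} and \Lemma{separablevariance} then yields
\[
\Expect[\widetilde S\mid A_n] \;=\; n \;+\; O(n^2/m^2) \;+\; O(K_n),
\qquad
\Var[\widetilde S\mid A_n] \;=\; O(n^2/m).
\]
(The expectation calculation proceeds via $\Expect[(n\Gamma_j^n)^2\mid A_n]=\Var+\text{mean}^2$ summed over $j\ge2$, minus the cross terms; the variance bound mirrors the unconditional variance estimate since $K_n=o(n)$.) Hence
\[
\Expect[\widetilde S\mid A_n] \;-\; \bigl(\bar\tau_n-(n\Gamma^n_1-n/m)^2\bigr) \;\geq\; \tfrac{n^2}{m}\taul(\epsy)(1+o(1)),
\]
which is large compared to the conditional standard deviation $O(n/\sqrt m)$, since their ratio is of order $\taul(\epsy)\,n/\sqrt m\to\infty$ under \Assumption{largealphabet}.

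Finally, Chebyshev's inequality applied to $\widetilde S$ conditioned on $A_n$ gives
\[
\Prob_{\pip}\!\bigl(\widetilde S < \bar\tau_n-(n\Gamma^n_1-n/m)^2 \,\big|\, A_n\bigr)
\;\leq\; \frac{\Var[\widetilde S\mid A_n]}{\bigl(\tfrac{n^2}{m}\taul(\epsy)(1+o(1))\bigr)^2}
\;=\; O\!\bigl(m/n^2\bigr) \;=\; o(1),
\]
which is exactly the claimed conclusion. The main obstacle is not the Chebyshev step but the careful bookkeeping of the conditional moment computations: one has to verify that conditioning on $n\Gamma_1^n=K_n$ (rather than averaging over it) does not change the leading-order expectation or variance of $\widetilde S$, and that the lower-order error terms in $(K_n-n/m)^2$ and in $\Expect[\widetilde S\mid A_n]$ remain $o(n^2/m)$.
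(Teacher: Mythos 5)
Your proposal is correct and follows essentially the same route as the paper: both isolate the first-cell contribution $(n\Gamma^n_1-n/m)^2\approx 2\tfrac{n^2}{m}\taul(\epsy)$ on $A_n$, observe that the remaining cells are conditionally multinomial with $n-K_n$ trials on $m-1$ uniform cells, compute the conditional mean and variance of the residual sum via the separable-statistic moment lemmas, and finish with Chebyshev to get an $O(m/n^2)$ bound on the exceptional probability.
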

\begin{lemma}\label{T:PEARSONA2}
\begin{equation}
-\lim_{n \rightarrow \infty}\frac{m}{n^2}\log(\Prob_\pip\{A_n\})=0.\nonumber
\end{equation}
\end{lemma}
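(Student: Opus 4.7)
The plan is to compute $\Prob_\pip\{A_n\}$ directly from the binomial formula, since under $\pip$ the count $n\Gamma_1^n$ is distributed as $\text{Binomial}(n,1/m)$. Writing $k_n \eqdef \lfloor n\sqrt{2\taul(\epsy)}/\sqrt{m}\rfloor$, this gives
\[
\Prob_\pip\{A_n\} \;=\; \binom{n}{k_n}\,m^{-k_n}\,(1-1/m)^{n-k_n}.
\]
Under \Assumption{largealphabet} together with the extra hypothesis $m=o(n^2/\log^2 n)$, one has $k_n\to\infty$ (because $n^2/m\to\infty$) while $k_n=o(n)$ (because $m\to\infty$), so Stirling's formula is the right tool.

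Applying Stirling in the form $\log n! = n\log n - n + O(\log n)$ to each of the three factorials in $\binom{n}{k_n}$, and expanding $\log(n-k_n) = \log n - k_n/n + O((k_n/n)^2)$, I would obtain
\[
\log\binom{n}{k_n} \;=\; k_n\log(n/k_n) + k_n + O(k_n^2/n) + O(\log n).
\]
Since $n/k_n = \sqrt{m/(2\taul(\epsy))}\,(1+o(1))$, we have $\log(n/k_n) = \half\log m + O(1)$. Combining this with $\log m^{-k_n} = -k_n\log m$ and $(n-k_n)\log(1-1/m) = -n/m + O(k_n/m) + O(n/m^2)$, the two $k_n\log m$ contributions partially cancel, leaving
\[
\log\Prob_\pip\{A_n\} \;=\; -\tfrac12\, k_n\log m + O(k_n) + O(k_n^2/n) + O(\log n) + O(n/m).
\]

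The leading term, after normalization, is
\[
-\frac{m}{n^2}\log\Prob_\pip\{A_n\} \;\sim\; \tfrac12\sqrt{2\taul(\epsy)}\,\frac{\sqrt{m}\,\log m}{n},
\]
which tends to zero precisely because $m=o(n^2/\log^2 n)$ gives $\sqrt{m}\log m/n = O(\sqrt{m}\log n/n) = o(1)$. The remaining error terms are all $o(n^2/m)$ under \Assumption{largealphabet} alone: $k_n^2/n \sim n/m$ so $(m/n^2)(k_n^2/n)=O(1/n)$; $(m/n^2)(n/m)=1/n$; and $(m/n^2)\log n \to 0$ since $m=o(n^2)$. Hence the limit equals $0$, as claimed. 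The main obstacle is merely bookkeeping, namely verifying that the Stirling and Taylor remainders are uniformly of smaller order than $n^2/m$; this calculation also makes transparent why only the $\sqrt{m}\log m$ term forces the additional hypothesis $m=o(n^2/\log^2 n)$, consistent with the authors' conjecture that this hypothesis should be removable by a sharper estimate of the binomial probability.
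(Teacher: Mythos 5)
Your proposal is correct and follows essentially the same route as the paper: the authors likewise write $\Prob_\pip\{A_n\}$ via the exact binomial formula, apply Stirling's formula to obtain $\log\Prob_\pip\{A_n\}=-\half\frac{n\sqrt{2\taul(\epsy)}}{\sqrt{m}}\log(m)(1+o(1))$, and then invoke $m=o(n^2/\log(n)^2)$ to kill the normalized limit. Your more explicit bookkeeping of the Stirling and Taylor remainders is a harmless elaboration of the same argument, and your closing observation about which term forces the extra hypothesis matches the authors' stated conjecture.
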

Combining Lemma~\ref{T:PEARSONTAUBD}, Lemma~\ref{T:PEARSONA} and  Lemma~\ref{T:PEARSONA2} together, we conclude  
\[J_F(\ptest)\leq -\liminf_{n \rightarrow \infty}\frac{m}{n^2}\log\bigl(\Prob_\pip \{\pearson \geq \bar{\tau}_n|A_n\} \Prob_\pip\{A_n\}\bigr)=0.\]
The proofs of these three lemmas are given in \Appendix{lemmapearson}. 
\end{proof}

\section{Alternative Distributions Based on $f$-Divergence}\label{sec:extension}
The set of alternative distributions studied in previous sections is defined using the total variation distance. The generalized error exponent analysis with the same normalization $r(n,m)=n^2/m$ also applies to other distance functions, as we will show in \Proposition{GEEcondition1} and \Proposition{GEEcondition2}. The set of alternative distributions $\mP$ considered in this section is also defined in \eqref{e:H1set} using the general distance function $d$ rather than $d=d_{\sf TV}$. Examples include the Kullback-Leibler (KL) divergence
\[d_{\sf KL}(q,p)=\sum_{j} q_j\log(q_j/p_j),\]
and its generalization known as $f$-divergence, 
\begin{equation}
d_f(q,p)=\sum_j p_j f(q_j/p_j),
\end{equation}
where $f$ is a convex function with $f(1)=0$.

Conditions under which the generalized error exponent analysis applies are given in the following: 
\begin{proposition}\label{t:GEEcondition1}
Suppose the distance function $d$ in the definition of alternative distribution in \eqref{e:H1set} satisfies
\begin{enumerate}
\item $d(q,p) \geq \alpha \dtv(q,p)$ for some $\alpha>0$.
\item \[\liminf_{n \rightarrow \infty}\inf_q\{\sum_j\frac{\mu_j^2}{\pip_j}: d(\mu,\pip) \geq \epsy, \mu \in \bmP\}>0.\]
\end{enumerate}
Then $n^2/m$ is the appropriate normalization for the large deviations analysis for small $\epsy>0$: There exists a test $\bphi$ such that 
\[J_F(\bphi)>0, J_M(\bphi)>0.\]There is a constant $\bar{J}$ satisfying $0<\bar{J}<\infty$ such that for any test $\bphi$, we have
\[\min\{J_F(\bphi),J_M(\bphi)\} \leq \bar{J}.\]
\end{proposition}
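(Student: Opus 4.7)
The plan is to prove the achievability and converse halves of the proposition separately, using condition~(2) to drive the achievability via the coincidence-based test $\ktest$, and condition~(1) to reduce the converse to the total-variation case already established in \Theorem{converse}.

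For the achievability, I would use $\ktest$ with threshold chosen as in \Theorem{KPERFORMANCE}. Positivity of $J_F(\ktest)$ is immediate from \Theorem{KPERFORMANCE}, since the false-alarm probability depends only on the null $\pip$ and is unaffected by the shape of $\mP$. For $J_M(\ktest)$ I would invoke \Theorem{ratefunction}: along any sequence $\smu=\{\mu^{(n)}\}$ with $\mu^{(n)} \in \mP$,
\[
J_\smu(\ktest,\tau)=\sup_{\theta \geq 0}\bigl\{\theta(-1-\tau)-\half(e^{-2\theta}-1)\kinf(\smu)\bigr\},
\]
which is strictly positive as soon as $\kinf(\smu)>1+\tau$ (take $\theta$ small and positive). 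Condition~(2), read as providing a uniform lower bound $\kappa_{*}>1$ on $\kinf(\smu)$ along admissible sequences, then yields $J_M(\ktest)>0$ after choosing any $\tau \in (0,\kappa_{*}-1)$.

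For the converse, condition~(1) gives the inclusion $\{\mu:\dtv(\mu,\pip)\geq \epsy/\alpha\}\subseteq \mP$, so the worst-case missed-detection probability against the $d$-alternative set dominates that against the $\dtv$-alternative set at level $\epsy/\alpha$. Hence $J_M$ for the $d$-problem is bounded above by the corresponding quantity for the $\dtv$-problem with threshold $\epsy/\alpha$. I would then apply \Theorem{converse} with $\epsy$ replaced by $\epsy/\alpha$ at the balance point $\tau_0 \in [0,\taul(\epsy/\alpha)-1]$ where $J_F^*(\tau_0)=J_M^*(\tau_0)=:\bar{J}$; this point exists by the monotonicity of $J_F^*$ (increasing) and $J_M^*$ (decreasing) in $\tau$ on the admissible interval. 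For any test $\bphi$, either $J_F(\bphi)<\bar{J}$, giving $\min\{J_F,J_M\}\leq\bar{J}$ trivially, or $J_F(\bphi)\geq J_F^*(\tau_0)$, in which case \Theorem{converse} forces $J_M(\bphi)\leq J_M^*(\tau_0)=\bar{J}$.

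The main obstacle I foresee is that \Theorem{ratefunction} is stated only for sequences $\smu$ with $\mu^{(n)} \in \bmPmub$, whereas the $d$-alternative set may well contain distributions with unbounded likelihood ratio (for instance, distributions concentrated on a small number of symbols). To close this gap I would split $\mP$ into a bounded-likelihood-ratio piece, handled verbatim by \Theorem{ratefunction}, and a concentrated piece, where a direct Chernoff calculation shows that $\kstat$ falls well below $\Expect_\pip[\kstat]$ with overwhelming probability, yielding a missed-detection exponent at least as large as in the bounded-ratio case. A secondary issue is that condition~(2) as written with ``$>0$'' is trivially satisfied since $\sum_j \mu_j^2/\pip_j \geq 1$; I would interpret it as the chi-square gap $\sum_j \mu_j^2/\pip_j - 1$ being bounded away from zero uniformly in $m$, which is the content actually needed to power the achievability step above.
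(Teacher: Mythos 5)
Your proposal follows essentially the same route as the paper's own (much briefer) proof: condition (1) places the bi-uniform distributions of \eqref{e:muUalternative} (at total-variation level $\epsy/\alpha$) inside $\mP$ so that \Theorem{converse} applies verbatim, while condition (2) keeps the rate function of \Theorem{ratefunction} positive, with the unbounded-likelihood-ratio distributions dispatched exactly as in the proof of \Theorem{KPERFORMANCE} via Propositions~\ref{T:UPPER-BOUNDAPPROXIMATELMGF} and~\ref{T:UPPER-BOUNDAPPROXIMATELMGF2}. Your observation that condition (2) as literally written is vacuous (the chi-square sum is always at least $1$ by Jensen's inequality) and must be read as a uniform gap $\sum_j \mu_j^2/\pip_j-1\geq \kappa_*-1>0$ is correct, and is exactly what positivity of the rate function requires, since $J_\smu(\ktest,\tau)>0$ forces $\kinf(\smu)>1+\tau$.
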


When $f$-divergence $d=d_f$ is used in the definition of alternative distribution, the generalized error exponent can be applied subject to conditions on $f$:
\begin{proposition}\label{t:GEEcondition2}
Suppose $f$ satisfies the following conditions:
\begin{enumerate}
\item For some $0<x<1$, 
\[\half(f(1-x)+f(1+x))>f(1).\]
\item There is a constant $\alpha>0$ such that for all $x$, 
\[f(x) \leq \alpha (x-1)^2.\]
\end{enumerate}
Then $n^2/m$ is the appropriate normalization for the large deviations analysis for small $\epsy>0$: There exists a test $\bphi$ such that 
\[J_F(\bphi)>0, J_M(\bphi)>0.\]There is a constant $\bar{J}$ satisfying $0<\bar{J}<\infty$ such that for any test $\bphi$, we have
\[\min\{J_F(\bphi),J_M(\bphi)\} \leq \bar{J}.\]
\end{proposition}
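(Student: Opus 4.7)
The plan is to derive Proposition~\ref{t:GEEcondition2} from Proposition~\ref{t:GEEcondition1} by verifying that each hypothesis on $f$ implies the corresponding one on $d = d_f$.

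Achievability follows quickly from hypothesis (2) of Proposition~\ref{t:GEEcondition2}. Substituting $f(x) \leq \alpha(x-1)^2$ into the definition of $d_f$ gives
\[
d_f(\mu,\pip) \;=\; \sum_j \pip_j f(\mu_j/\pip_j) \;\leq\; \alpha \sum_j \pip_j(\mu_j/\pip_j - 1)^2 \;=\; \alpha\Bigl(\sum_j \frac{\mu_j^2}{\pip_j} - 1\Bigr),
\]
so every $\mu$ with $d_f(\mu,\pip) \geq \epsy$ satisfies $\sum_j \mu_j^2/\pip_j \geq 1 + \epsy/\alpha$, which is exactly hypothesis (2) of Proposition~\ref{t:GEEcondition1}. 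Feeding this bound into Theorem~\ref{t:ratefunction} yields $\kinf(\smu) \geq 1 + \epsy/\alpha > 1$ uniformly over sequences $\smu$ drawn from the alternative set, so the coincidence-based test $\ktest$ (or the weighted variant $\ttest$ of Theorem~\ref{T:TBOUND} when $\pip$ is non-uniform) achieves $J_F(\bphi),J_M(\bphi) > 0$.

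For the converse, I would use hypothesis (1) of Proposition~\ref{t:GEEcondition2} to build the permutation-symmetric family of alternative distributions that powers the argument of \Appendix{converse}. Fix $x \in (0,1)$ with $\delta \eqdef \half(f(1-x)+f(1+x)) - f(1) > 0$, and for each $\mathcal{U} \in \Kset$ let
\[
\mu_{\mathcal{U},j} = \begin{cases} (1+x)/m, & j \in \mathcal{U}, \\ (1-x)/m, & j \notin \mathcal{U}. \end{cases}
\]
A direct computation using $\pip_j = 1/m$ and $|\mathcal{U}| = \lfloor m/2 \rfloor$ gives $d_f(\mu_{\mathcal{U}},\pip) = \half(f(1+x)+f(1-x)) + o(1) = \delta + o(1)$ (using $f(1) = 0$), so for any $\epsy < \delta$ the entire family eventually lies in $\mP$, and the likelihood ratios $\mu_{\mathcal{U},j}/\pip_j \in \{1-x,\,1+x\}$ are uniformly bounded. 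Replacing \eqref{e:muUalternative} in the proof of Theorem~\ref{t:converse} by this family yields a finite upper bound on $J_M(\bphi)$ whenever $J_F(\bphi) \geq J_F^*(0)$, giving $\min\{J_F(\bphi),J_M(\bphi)\} \leq \bar{J} < \infty$ for small $\epsy$.

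The main obstacle is that hypothesis (1) of Proposition~\ref{t:GEEcondition1}, namely $d \geq \alpha \dtv$, is \emph{not} a direct consequence of the hypotheses on $f$: this linear domination fails for standard $f$-divergences such as KL, where Pinsker's inequality only provides a quadratic lower bound. I would therefore not invoke Proposition~\ref{t:GEEcondition1} verbatim, but instead extract from its proof the weaker structural property its converse actually uses --- namely the existence of a permutation-symmetric family $\{\mu_{\mathcal{U}}\}_{\mathcal{U} \in \Kset} \subset \mP$ with uniformly bounded likelihood ratios to $\pip$ --- which the bi-uniform construction above provides directly. With this substitution the remainder of the argument transfers without change.
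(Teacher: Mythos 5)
Your proposal is correct and follows essentially the same route as the paper: condition (2) is used to bound $d_f(\mu,\pip)\leq\alpha(\sum_j\mu_j^2/\pip_j-1)$ so that $\kinf\geq 1+\epsy/\alpha>1$ and the rate function is positive, while condition (1) is used to verify that the bi-uniform family \eqref{e:muUalternative} (with $2\epsy'=x$) lies in $\mP$ for small $\epsy$, so the converse argument of \Appendix{converse} applies unchanged. Your observation that one must re-verify membership of the bi-uniform family rather than invoke \Proposition{GEEcondition1} verbatim is exactly what the paper's proof does implicitly, and your retention of the $-1$ term in the achievability bound is in fact slightly more careful than the paper's displayed inequality.
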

Note that the KL divergence satisfies the conditions of this proposition.
\begin{proof}[Proof of \Proposition{GEEcondition1}]
The converse result in \Theorem{converse} is proved by showing that the worst-case probability of missed detection over the set of distributions given in \eqref{e:muUalternative} is lower-bounded regardless of the test used. The first condition in \Proposition{GEEcondition1} guarantees that these distributions are still in the set $\mP$ of alternative distributions. 

For the achievability result, the critical step is to show that the rate function is  positive for any alternative distribution whose likelihood ratio with respect to $\pip$ is bounded. The second condition in \Proposition{GEEcondition1}  guarantees that $\kappa$ defined in \eqref{e:defD} is positive, which by \Theorem{ratefunction} implies that the rate function of the coincidence-based test is positive.  
\end{proof}

\begin{proof}[Proof of \Proposition{GEEcondition2}]
The proof is similar to that of \Proposition{GEEcondition1}. The first condition of \Proposition{GEEcondition2} ensures that the collection of bi-uniform distributions given in \eqref{e:muUalternative}  used in the proof of the converse result is in the set $\mP$ of alternative distributions: For $\mu_{\mathcal{U}}$ defined in \eqref{e:muUalternative} with $\epsy$ replaced by $\epsy'$, for even $m$, for small enough $\epsy$, we have
\[d_f(\mu_{\mathcal{U}},\pip) = \half f(1+2\epsy')+\half f(1-2\epsy') \geq \epsy.\]
The second condition implies that
\[\alpha \sum_j\frac{\mu_j^2}{\pip_j} \geq d_f(\mu,\pip) \geq \epsy.\]
Thus,  the rate function is positive for any alternative distribution whose likelihood ratio with respect to $\pip$ is bounded. 
\end{proof}

\section{Discussions}\label{sec:discussion}

This paper invites more questions than it answers.  We collect here further connections with other concepts in statistics and information theory.  

\subsection{Asymptotic relative efficiency}\label{sec:bahadur}

In the case of fixed alphabet, connections between error exponent and asymptotic relative efficiency such as the Chernoff efficiency are summarized in \cite[Chapter 22]{das08} and \cite[Chapter 10]{ser80}. This has been extended to the large sample case where $m \rightarrow \infty $ and $m=O(n)$ in \cite{quirob85p727} by treating $m$ as a function of $n$.  We will examine the connection for the small sample case.

We first examine the connection between Chernoff efficiency and generalized error exponent. Following \cite{quirob85p727}, we consider the setting of continuous-valued observations in \Section{sec:application2continous} in which the observations are grouped into cells $\prt$ that have equal probabilities under the null distribution $P$. Let $m(n,\bphi)$ be the number of cells used when the number of observations is $n$ for a  test $\bphi$. We are interested in the small sample case where $\lim_{n \rightarrow \infty} n/m(n,\bphi)=0$. Consider two tests $\bphi$ and $\bphi'$. Let $n(\alpha, \beta,\bphi)$ and $n'(\alpha, \beta,\bphi')$ be the number of observations required for the tests $\bphi$ and $\bphi'$, respectively, so that the probability of false alarm is $\alpha$ and the probability of missed detection under a \emph{particular} distribution $Q$ is $\beta$. When both lower probability of false alarm and missed detection are of interest, the Chernoff efficiency is used. It is defined as $e_{C}(\bphi, \bphi')=\lim_{\alpha \rightarrow 0} n(\alpha, \alpha,\bphi')/n(\alpha, \alpha,\bphi)$, where the probability of false alarm and missed detection is  set to be equal, i.e. $\alpha=\beta$.

The generalized error exponents $J_F(\bphi)$ and $J_{M, Q}(\bphi)$, where the subscript $Q$ indicates $J_{M,Q}$ is the generalized error exponent for a {particular} alternative distribution $Q$, are defined for a sequence of partitions $\prt$ whose number of cells is given by $m(n, \bphi)$. We choose the test threshold so that probability of false alarm and missed detection is equal. Define 
\[r(x)=\lim_{n \rightarrow \infty} m(nx,\bphi')/m(n,\bphi).\]
The function $r(x)$ characterizes how the number of cells increases with the number of samples for the two tests. For example, when $m(n, \bphi)=m(n, \bphi')= n^a$, we have $r(x)=x^a$. This function has also been used in  the study of relative efficiencies for the large sample case in \cite{quirob85p727}.

\begin{proposition}\label{t:ChernoffARE}
Suppose the following conditions hold:
\begin{enumerate}
\item $m(n, \bphi)$ and $m(n, \bphi')$ are both monotonically non-decreasing in $n$. 
\item $\lim_{n \rightarrow 0} n/m(n,\bphi)=0$, and $\lim_{n \rightarrow 0} n/m(n,\bphi')=0$.  
\item $r(x)$ is well-defined and continuous on $(0, \infty)$.
\item $n(\alpha,\alpha, \bphi)$ and $n(\alpha,\alpha, \bphi')$ are both monotonically non-increasing in $\alpha$. 
\item $0<\min\{J_C(\bphi), J_C(\bphi')\} \leq \max\{J_C(\bphi), J_C(\bphi')\}< \infty$. 
\item $0<e_{C}(\bphi',\bphi)<\infty$. 
\end{enumerate}
Then
$e_{C}(\bphi',\bphi)$ satisfies
\begin{equation}\label{e:PearsonAndGEE}
\frac{e_{C}(\bphi',\bphi)^2}{r(e_{C}(\bphi',\bphi))}=\frac{J_C(\bphi')}{J_C(\bphi)}.\nonumber
\end{equation}
\end{proposition}
\begin{proof}
It follows from the  monotonicity of $n(\alpha, \alpha, \phi)$ and the condition on $J_C(\bphi), J_C(\bphi')$ that $n \rightarrow \infty$ if  $\alpha \rightarrow 0$. Combining this with the monotonicity condition on $m$, we obtain
\[\lim_{\alpha \rightarrow 0}\frac{\log(\alpha)}{n(\alpha,\alpha, \bphi)^2/m(n(\alpha,\alpha, \phi), \bphi)}=-J_C(\bphi).\]
Therefore, 
\[\begin{aligned}
\frac{e_{C}(\bphi',\bphi)^2}{r(e_{C}(\bphi',\bphi))}&=\lim_{\alpha \rightarrow 0}\frac{n(\alpha,\alpha, \bphi)^2}{n(\alpha,\alpha, \bphi')^2}\lim_{\alpha \rightarrow 0}\frac{m(n(\alpha,\alpha,\bphi'),\bphi')}{m(n(\alpha,\alpha,\bphi),\bphi)}\\&=\frac{\lim_{\alpha \rightarrow 0}\frac{\log(\alpha)}{n(\alpha,\alpha, \bphi')^2/m(n(\alpha,\alpha, \phi'), \bphi')}}{\lim_{\alpha \rightarrow 0}\frac{\log(\alpha)}{n(\alpha,\alpha, \bphi)^2/m(n(\alpha,\alpha, \phi), \bphi)}}\!=\!\frac{J_C(\bphi')}{J_C(\bphi)}.\end{aligned}\]
\end{proof}

Bahadur efficiency is more relevant for the scenario where the probability of false alarm is small. We adopt the definition given in \cite[Chapter 22]{das08}: $e_{B}(\bphi', \bphi,\beta)=\lim_{\alpha \rightarrow 0} n(\alpha, \beta,\bphi)/n(\alpha, \beta,\bphi')$. Under mild conditions, this can be shown to be equivalent to Bahadur's original definition based on the concept of stochastic comparison. 
Consider two tests $\bphi$ and $\bphi'$ for which the generalized error exponents are positive and finite. We choose the test threshold of the two tests so that generalized error exponents $J_F(\bphi)$ and $J_F(\bphi')$ are maximized while satisfying the constraint that $P_M(\bphi)\leq \beta <1$ and $P_M(\bphi')\leq \beta <1$. We conjecture that the following holds under conditions similar to those in \Proposition{ChernoffARE}:
\begin{equation}
\frac{e_{B}(\bphi',\bphi)^2}{r(e_{B}(\bphi',\bphi))}=\frac{J_F(\bphi')}{J_F(\bphi)}.\nonumber
\end{equation}

A conjecture concerning Hodges-Lehmann efficiency is similar and not repeated here. 
Pitman efficiency, on the other hand, has been shown to be closely related to CLT analysis in the large sample case \cite{quirob85p727}. An analysis of Pitman efficiency in the small sample setting will be investigated in future work.


\subsection{Unified analysis framework for large and small sample}


Our results in this paper do not directly apply to the large sample case, since it is based on the analysis of the number of symbols appearing once or twice, which vanishes to zero in the large sample case. On the other hand, some of the analysis method and insights can be applied towards finding a unified analysis framework.

First, the Poissonization technique can be applied in both the large and small sample case. Similar to the unified CLT results obtained in \cite{med77p1} using the Poissonization technique, the large deviations analysis for the achievability result in this paper might be extended to a general large deviations result for separable statistics. The key difference between the large and sample case in this analysis is which terms in the expansion of the log-moment generating function vanish. For example, in the small sample case, the term corresponding to symbols appearing more than twice becomes negligible for separable statistics with a bounded $f_j(x)$. 

Second, the results on the coincidence-based test and Pearson's chi-square test might be leveraged to obtain a test that achieves non-zero error exponent for both large sample and small sample problems with uniform null distributions: The coincidence-based test is not asymptotically consistent for the large sample problem since for a uniform distribution, the number of symbols appearing only once vanishes to zero as the number of sample increases. Pearson's chi-square test has been shown to be asymptotically consistent in both cases. However, it has a zero generalized error exponent in the small sample case. The key difference between these two tests is the weights: As two separable statistics, $f_j$ in the definition of separable statistics \eqref{e:sepstats} vanishes for $x>1$ in the coincidence-based test, and increases as $x^2$ for Pearson's chi-square test. This suggest that one should investigate tests whose $f_j(x)$ increases slower than $x^2$. Examples of these tests are $\ell_1$-norm based test and GLRT.

\subsection{Non-uniform null distribution}
The results in this paper are applicable when the null distribution is uniform or nearly uniform. 
To extend the results to general non-uniform null distributions, we need to find the correct conjecture on the proper normalization in the definition of error exponents, prove a converse result and an achievability result. 

The size of alphabet $m$ is found to be the proper normalization for the generalized error exponent for the uniform case. For the non-uniform case, a generalization of $m$, such as the Shannon or R\'enyi entropy of $\pip$, might be more appropriate. 

The worst-case distributions used in the analysis in this paper are likely to be different for the non-uniform case. In \cite{batforrubsmiwhi00p259, val08p383}, a hardness result is established for the two sample problem based on the analysis of two non-uniform distributions. These two distributions are constructed using a combination of symbols with large probability and small probability, where the likelihood ratio with respect to the uniform distribution increases unbounded on a large probability symbol, and remains bounded on a small probability symbol. This construction and analysis method might also be applicable for our problem.

We have proposed a weighted coincidence-based test for the near uniform case, which approximates the $\ell_2$ norm when the likelihood ratio between the null distribution and the uniform distribution is bounded. For arbitrary non-uniform null distribution, one possible approach is to choose a different weight. 
As the results in \cite{bar89p107, batforrubsmiwhi00p259, val08p383, kelwagtulvis13p782} implies, the key is to analyze large probability and small probability symbols. A unified result on the large deviations for separable statistics for both large and small probability symbols would serve as a basis for choosing the weight. Another possible approach is to use the bucketing method in \cite{batfisforkumrubwhi01p442}, in which the set of symbols is divided into several buckets so that the distribution over the symbols in the same bucket is nearly uniform. It remains to see whether these approaches give the best possible error exponents.

\section{Conclusions and Discussions}\label{sec:conclusion}

The classical error exponent criterion, which appears in the large deviation analysis for universal hypothesis testing problems with a large number of samples, can be extended to the small sample case, provided the normalization is modified to account for both the sample size $n$ and the alphabet size $m$. 

We offer a few discussions on the results and point out directions for future research:
\begin{compactnumerate}
\item The analysis in this paper is of asymptotic nature. The generalized error exponent gives the leading term in the asymptotic expansion of the probability of error. Finer approximations are valuable especially for characterizing the finite sample performance when $n/m$ is not very small. For example, finer approximations can reveal the difference among the class of tests described in \Section{sec:extensioncoincidence} that have the same generalized error exponents.

\item It is desirable to establish general large deviation characterizations of separable statistics for small sample problems, similar to those established for $n \sameorder m$ in \citep{ron84p800,kol05p255}. Such results could provide more insights on how the coefficients of a separable statistic affect the test's performance. For example, how the performance of a test with the test statistic $\sum_{j=1}^m |n\Gamma_j^n-n\pip_j|^\rho$ varies with $\rho$? 

\item We have focused on the simple goodness-of-fit problem in this paper, in which $\pip$ is fully specified. A natural extension is the composite goodness-of-fit problem in which $\pip$ is not fully specified but assumed to be in a known set. A similar generalized error exponent concept should exist for the composite case. 

\item There are many other problems for which the approach presented in this paper is relevant. Examples include the classification problem \citep{ziv88p278, gut89p401, kelwagtulvis13p782}, the problem of testing whether two distributions are close \citep{batforrubsmiwhi00p259, achjaforlpan11p47}, and probability estimation over a large or unknown alphabet \citep{wagviskul11p3207, sanorlvis07p638, orlsanzha03p179}.

\quad In the recent work \citep{huamey12p2586} it is shown how to adapt the methods presented in this paper to the classification problem. The generalized error exponent analysis is applied to characterize the different ways in which the number of training samples and the number of test samples affect the performance of classification algorithms.


\item Topological structure often contains critical information that is easily ignored in the approaches focused on in this work.  In particular, in this paper we have not considered any notion of distance between points in the alphabet.  Other approaches such as the support vector machine, or more recent work such as \citep{wriyangansasyi09p210} are based primarily on topology.   It will be desirable to  create a coherent bridge between the approach developed here and topological approaches to hypothesis testing.   It is likely that current information-theoretic tools can help to create these bridges, such as concepts from lossy source-coding.   We are also considering extensions of the work described here to the feature selection problem of \cite{unndaymeysurvee11p1587,huamey10p1618} in which $m$ is interpreted as the number of features rather than the alphabet size.  
\end{compactnumerate}
\appendices

\section*{Organization of the Appendix}\label{apx:organizationappendix}

Approximations to the moments of separable statistics are given in \Appendix{moments}. These results are used in the rest of the proofs. 

The proofs of \Theorem{KPERFORMANCE} and \Theorem{ratefunction} are given in \Appendix{kperformance}. The major portion of the proof is to obtain approximations of the log-moment generating function by applying asymptotic analysis methods . Similar arguments are used in the proofs of \Theorem{modifiedK} and Theorem~\ref{T:TBOUND} given in \Appendix{extension}.

The proof of the converse result \Theorem{converse} given in \Appendix{converse} can be read almost independently of \Appendix{kperformance} and \ref{apx:extension}. It is based on analyzing the worst-case distributions given in Lemma~\ref{T:WORSTMUJSQUARE}. 

The lemmas supporting the proof of  \Theorem{pearsonerrorexponent} which characterizes Pearson's chi-square test performance,  are given in \Appendix{lemmapearson}, and can be read independently of \Appendix{kperformance}, \ref{apx:extension} and \ref{apx:converse}.

\section{Moments of Separable Statistics}\label{apx:moments}

This section provides a survey of results on asymptotic approximations to moments of separable statistics. These results hold for the distributions in the set $\bmPmub$ defined in \eqref{e:defbmPmub}.

\begin{lemma}[Expectation of a separable statistic]\label{t:separableexpectation}
Consider a separable statistic given by $\sum_{j=1}^m f_j(n\Gamma^n_j)$. Suppose that  $\max_j |f_j(x)|\leq a_0e^{a_0x}$ for some $a_0>0$. The expectation of the separable statistic for $\nu \in \bmPmub$ is given by:
\[\begin{aligned}
&\Expect_\nu[\sum_{j=1}^m f_j(n\Gamma^n_j)]\\=&\sum_j f_j(0)+n\sum_{j=1}^m \nu_j(f_j(1)-f_j(0))\\&+\frac{1}{2} \frac{n^2}{m}(m\sum_{j=1}^m \nu_j^2)\bigl(f_j(0)-2f_j(1)+f_j(2)\bigr)+O(\frac{n^3}{m^2}).\end{aligned}\] 
\end{lemma}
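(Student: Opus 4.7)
Since $\bfmZ_1^n$ is i.i.d., for each $j \in [m]$ the count $n\Gamma_j^n = \sum_{i=1}^n \ind\{Z_i = j\}$ is Binomial$(n,\nu_j)$. The plan is to split the expectation into a polynomial main part (from $k=0,1,2$) whose moments can be Taylor expanded in powers of $\nu_j$, plus a tail (from $k\geq 3$) controlled by the exponential growth hypothesis on $f_j$, then aggregate over $j$ using $\nu_j \leq \barmu/m$.

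First I would write, by linearity of expectation,
\[
\Expect_\nu\!\bigl[\textstyle\sum_j f_j(n\Gamma_j^n)\bigr]=\sum_{j=1}^m \sum_{k=0}^n f_j(k)\binom{n}{k}\nu_j^k(1-\nu_j)^{n-k},
\]
and split the inner sum as $T_j := \sum_{k=0}^{2} f_j(k) P_j(k)$ plus the tail $R_j := \sum_{k\ge 3} f_j(k) P_j(k)$, where $P_j(k) = \Prob_\nu(n\Gamma_j^n = k)$. For $T_j$, I would Taylor expand the three Binomial probabilities about $\nu_j = 0$:
\[
(1-\nu_j)^n = 1 - n\nu_j + \tfrac{n(n-1)}{2}\nu_j^2 + O(n^3\nu_j^3),
\]
\[
n\nu_j(1-\nu_j)^{n-1} = n\nu_j - n(n-1)\nu_j^2 + O(n^3\nu_j^3),
\]
\[
\tbinom{n}{2}\nu_j^2(1-\nu_j)^{n-2} = \tfrac{n(n-1)}{2}\nu_j^2 + O(n^3\nu_j^3),
\]
where the remainders are uniform in $j$ because $n\nu_j \leq n\barmu/m = o(1)$ under \Assumption{largealphabet}. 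Collecting by the values $f_j(0), f_j(1), f_j(2)$ (each bounded by a constant depending only on $a_0$), and absorbing $n(n-1) = n^2 + O(n)$ into the error, I obtain
\[
T_j = f_j(0) + n\nu_j\bigl(f_j(1)-f_j(0)\bigr) + \tfrac{n^2}{2}\nu_j^2\bigl(f_j(0)-2f_j(1)+f_j(2)\bigr) + O(n^3\nu_j^3) + O(n\nu_j^2).
\]

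For the tail $R_j$, the hypothesis $|f_j(k)| \leq a_0 e^{a_0 k}$ yields
\[
|R_j| \leq a_0 \sum_{k\ge 3}\binom{n}{k}(\nu_j e^{a_0})^k(1-\nu_j)^{n-k} \leq a_0 \sum_{k\ge 3}\frac{(n\nu_j e^{a_0})^k}{k!} = O((n\nu_j)^3),
\]
since $n\nu_j e^{a_0} = o(1)$ makes the series dominated by its $k=3$ term.

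Finally I would sum over $j$, using $\nu_j \leq \barmu/m$ and $\sum_j \nu_j = 1$ to bound $\sum_j \nu_j^3 \leq (\barmu/m)^2 \sum_j \nu_j = O(1/m^2)$ and $\sum_j \nu_j^2 \leq \barmu/m$. This gives $\sum_j |R_j| = O(n^3/m^2)$ and $\sum_j O(n\nu_j^2) = O(n/m) = o(n^3/m^2)$ (since $m = o(n^2)$ by \Assumption{largealphabet}), yielding the claimed formula. The only delicate step is the tail bound: I need the exponential growth hypothesis precisely to ensure that the series $\sum_{k\ge 3}(n\nu_j e^{a_0})^k/k!$ converges to $O((n\nu_j)^3)$ with constants uniform in $j$, rather than to a term that would degrade when summed over the $m$ coordinates.
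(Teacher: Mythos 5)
Your proof is correct and takes essentially the same route as the paper's: both isolate the $k=0,1,2$ terms of the binomial expectation, Taylor-expand the corresponding binomial probabilities using $n\nu_j\le n\gamma/m=o(1)$, and control the $k\ge 3$ tail via the growth hypothesis $|f_j(k)|\le a_0e^{a_0k}$, with the per-coordinate $O((n\nu_j)^3)$ remainders summing to $O(n^3/m^2)$. No gaps.
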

\begin{proof}
For any $j$, $\nu_j^3{n \choose 3}|f_j(3)|=O(\frac{n^3}{m^3})$, and
\[
\begin{aligned}
\sum_{x=4}^\infty \nu_j^x{n \choose x}|f_j(x)|
 &\leq \!  a_0\sum_{x=4}^\infty (\frac{e^{a_0} \mub n}{m})^x\\& \leq\!  \frac{a_0}{|\log(e^{a_0}\mub n/m)|}(\frac{e^{a_0} \mub n}{m})^3\!=\!O(\frac{n^3}{m^3}).
\end{aligned}
\]
Consequently, 
\[\begin{aligned}
\Expect_\nu[\sum_{j=1}^m f_j(n\Gamma^n_j)]
=&\sum_{j=1}^m [f_j(0)(1\!-\!\nu_j)^n \!+\! f_j(1)n\nu_j(1\!-\!\nu_j)^{n-1} \!\\
&+\! f_j(2){\!n\! \choose \!2\!}\nu_j^2(1\!-\!\nu_j)^{n-2}\!+\!O(\frac{n^3\!}{m^3\!})] 
\end{aligned}\nonumber
\]
Rearranging the right-hand side leads to the claim of this lemma.
\end{proof}
\Lemma{separableexpectation} implies  Lemma~\ref{T:EXPECTT}, as well as the following asymptotic approximation of the expectation of $\kstat$:
\begin{lemma}\label{t:expectK}
For any $\nu \in \bmPmub$:
\begin{equation}\label{e:expectK}
\begin{aligned}
\Expect_{\nu}[\kstat]&=-n+\frac{n^2}{m}\bigl(m\sum_{j=1}^m\nu_j^2\bigr)+O(\frac{n^3}{m^2}).
\end{aligned}\nonumber
\end{equation}
\end{lemma}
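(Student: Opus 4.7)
The plan is to derive Lemma~\ref{t:expectK} as a direct application of the general separable-statistic expectation formula in Lemma~\ref{t:separableexpectation}. First I would observe that the coincidence-based statistic
\[
\kstat = -\sum_{j=1}^m \ind\{n\Gamma_j^n = 1\}
\]
is a separable statistic in the sense of \Section{sec:model}, corresponding to the choice $f_j(x) = -\ind\{x = 1\}$ for every $j \in [m]$. Since $|f_j(x)| \leq 1$ for all $x$, the boundedness hypothesis $|f_j(x)| \leq a_0 e^{a_0 x}$ of \Lemma{separableexpectation} is trivially satisfied (e.g.\ with $a_0 = 1$), so the general formula applies for every $\nu \in \bmPmub$.

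Next I would read off the three constants appearing in \Lemma{separableexpectation}: $f_j(0) = 0$, $f_j(1) = -1$, and $f_j(2) = 0$. Plugging these in, the zeroth-order term $\sum_j f_j(0)$ vanishes, the first-order term is
\[
n\sum_{j=1}^m \nu_j\bigl(f_j(1) - f_j(0)\bigr) = -n\sum_{j=1}^m \nu_j = -n,
\]
using $\sum_j \nu_j = 1$, and the second-order term becomes
\[
\tfrac{1}{2}\,n^2 \sum_{j=1}^m \nu_j^2 \bigl(f_j(0) - 2 f_j(1) + f_j(2)\bigr) = n^2 \sum_{j=1}^m \nu_j^2 = \frac{n^2}{m}\Bigl(m\sum_{j=1}^m \nu_j^2\Bigr).
\]
Combining these with the $O(n^3/m^2)$ remainder supplied by \Lemma{separableexpectation} yields the claimed identity.

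Since this is purely a specialization of an already-proved general lemma to a particular bounded choice of $f_j$, there is no real obstacle; I expect the write-up to be essentially a one-paragraph calculation. The only minor care point is making sure the rewriting $n^2\sum_j \nu_j^2 = (n^2/m)(m\sum_j \nu_j^2)$ is displayed in the normalized form used throughout the paper, because that factorization is what makes the second-order term visibly comparable to the scale $r(n,m) = n^2/m$ governing the generalized error exponent analysis.
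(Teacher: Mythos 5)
Your proposal is correct and is exactly the route the paper takes: the paper states that Lemma~\ref{t:expectK} follows from \Lemma{separableexpectation} without writing out the specialization, and your calculation with $f_j(x)=-\ind\{x=1\}$ (so $f_j(0)=0$, $f_j(1)=-1$, $f_j(2)=0$) supplies precisely the omitted one-paragraph computation. No gaps.
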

This will be used in the proof of \Theorem{KPERFORMANCE}.

\begin{lemma}[Variance of a separable statistic]\label{t:separablevariance}
Consider a \emph{symmetric} separable statistic $\sum_{j=1}^m f(n\Gamma^n_j)$. Suppose that $|f(x)|\leq a_0e^{a_0x}$ for some $a_0>0$. If  $f(0)=0$ and $f(2) \neq 2 f(1)$, then its variance for $\nu \in \bmPmub$ is given by 
\[\Var_\nu[\sum_{j=1}^m f(n\Gamma^n_j)]\!=\!\half \frac{n^2}{m} (f(2)-2f(1))^2 (m\sum_{j=1}^m \nu_j^2)(1+o(1)).\]  
\end{lemma}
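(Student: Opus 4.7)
The plan is to pass to the occupancy representation: let $N_k := \sum_j \ind\{n\Gamma_j^n = k\}$, so that $T := \sum_j f(n\Gamma_j^n) = \sum_{k \geq 0} f(k) N_k$. Exploiting $f(0) = 0$ together with the deterministic identity $\sum_{k \geq 1} k N_k = n$, I substitute $N_1 = n - \sum_{k \geq 2} k N_k$ to obtain
\[
T = n f(1) + \sum_{k \geq 2} g(k) N_k, \qquad g(k) := f(k) - k f(1),
\]
so that $\Var_\nu[T] = \Var_\nu\bigl[\sum_{k \geq 2} g(k) N_k\bigr]$; note $g(2) = f(2) - 2f(1) \neq 0$ by hypothesis. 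The virtue of this rewriting is that the piece $n f(1)$---which would otherwise be the dominant $O(n)$ contribution to any naive variance bound---is pushed into a deterministic constant and drops out, exposing the variance of order $n^2/m$ that is driven almost entirely by $N_2$.

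\textbf{Reducing to the $k=2$ term.}
Under $\nu \in \bmPmub$, i.e.\ $\nu_j \leq \mub/m$, the multinomial factorial moments satisfy
\[
\Expect_\nu[N_k] \leq \binom{n}{k}\sum_j \nu_j^k \leq \frac{n^k}{k!}\Big(\frac{\mub}{m}\Big)^{k-1},
\]
and an analogous estimate for $\Expect_\nu[N_k(N_k-1)]$ from the multinomial joint probability yields standard deviations of the form $\mathrm{sd}_\nu[N_k] \leq (n/\sqrt{m})\,C_k\,(n/m)^{(k-2)/2}$, with constants $C_k$ that remain summable against $|g(k)| \leq a_0 e^{a_0 k} + k a_0 e^{a_0}$. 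Applying Cauchy--Schwarz at the level of standard deviations (not variances) to $\Var_\nu\bigl[\sum_{k \geq 3} g(k) N_k\bigr]$ and to the cross term $\mathrm{Cov}_\nu\bigl[N_2, \sum_{k \geq 3} g(k) N_k\bigr]$ then gives
\[
\Var_\nu[T] = g(2)^2 \Var_\nu[N_2] + o(n^2/m).
\]

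\textbf{Computing $\Var_\nu[N_2]$.}
Write $N_2 = \sum_j Y_j$ with $Y_j := \ind\{n\Gamma_j^n = 2\}$. Using $\Expect_\nu[Y_j] = \binom{n}{2}\nu_j^2(1-\nu_j)^{n-2}$ and $(1-\nu_j)^{n-2} = 1 + o(1)$ uniformly in $j$ (since $n\nu_j \leq n\mub/m = o(1)$), the diagonal contribution is $\sum_j \Expect_\nu[Y_j](1 - \Expect_\nu[Y_j]) = \tfrac{n^2}{2}(\sum_j \nu_j^2)(1+o(1))$. For the cross-covariance,
\[
\mathrm{Cov}_\nu[Y_j, Y_k] = \frac{n!}{2!\,2!\,(n-4)!}\nu_j^2 \nu_k^2 (1-\nu_j-\nu_k)^{n-4} - \binom{n}{2}^2 \nu_j^2 \nu_k^2 (1-\nu_j)^{n-2}(1-\nu_k)^{n-2},
\]
and a Taylor expansion in $1/n$ and $\nu_j+\nu_k$ yields $|\mathrm{Cov}_\nu[Y_j, Y_k]| = O(n^3 \nu_j^2 \nu_k^2)$; summing over $j \neq k$ gives $O(n^3 (\sum_j \nu_j^2)^2) = O(n^3/m^2) = o(n^2/m)$ by \Assumption{largealphabet}. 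Hence $\Var_\nu[N_2] = \tfrac{n^2}{2}(\sum_j \nu_j^2)(1+o(1))$, and multiplying by $g(2)^2 = (f(2) - 2f(1))^2$ yields the claimed formula.

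\textbf{Main obstacle.}
The principal technical difficulty is the uniform-in-$k$ tail control for $\sum_{k \geq 3} g(k) N_k$: the weights $|g(k)|$ grow exponentially in $k$, while the factorial moments $\Expect_\nu[N_k]$ only shrink geometrically in $n\mub/m$. Cauchy--Schwarz must be applied at the level of standard deviations rather than variances, so that the tail is \emph{summed} rather than summed-in-squares, and the explicit exponential-growth hypothesis $|f(x)| \leq a_0 e^{a_0 x}$ is used precisely to guarantee absolute convergence of the resulting series once $n/m$ is small enough.
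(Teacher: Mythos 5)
Your argument is correct, but it is a genuinely different route from the paper's: the paper does not prove this lemma at all, it simply cites it as the combination of Equations 2.11 and 2.20 of Medvedev's general theory of separable statistics \citep{med77p1}. What you have produced is a self-contained, elementary derivation via the occupancy decomposition $T=\sum_k f(k)N_k$, the substitution $N_1=n-\sum_{k\ge 2}kN_k$ (which is exactly where the hypothesis $f(0)=0$ and the combination $f(2)-2f(1)$ enter), and a direct second-moment computation for $N_2$ plus a Minkowski-type bound on the exponentially weighted tail $\sum_{k\ge3}g(k)N_k$. The trade-off is the usual one: the citation buys brevity and covers non-symmetric $f_j$, while your proof makes visible \emph{why} the variance is governed solely by the $k=2$ occupancy count in the regime $n=o(m)$, and why the growth condition $|f(x)|\le a_0e^{a_0x}$ is exactly what is needed (geometric decay of $\Expect_\nu[N_k]$ in $\mub n/m$ beating exponential growth of the weights).

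One step deserves an explicit warning, because the naive version of it fails. In the cross-covariance
\[
\mathrm{Cov}_\nu[Y_j,Y_k]=\tbinom{n}{2}\tbinom{n-2}{2}\nu_j^2\nu_k^2(1-\nu_j-\nu_k)^{n-4}-\tbinom{n}{2}^2\nu_j^2\nu_k^2(1-\nu_j)^{n-2}(1-\nu_k)^{n-2},
\]
if you bound the two factors $(1-\nu_j-\nu_k)^{n-4}$ and $(1-\nu_j)^{n-2}(1-\nu_k)^{n-2}$ separately, each is only $1+O(n/m)$, and you are left with a remainder of order $n^5\nu_j^2\nu_k^2/m$, whose sum $O(n^5/m^3)$ is \emph{not} $o(n^2/m)$ throughout the range $m=o(n^2)$ (e.g.\ $m=n^{1.4}$). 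The bound $O(n^3\nu_j^2\nu_k^2)$ you assert is correct, but it requires expanding the two logarithms jointly and observing that the first-order terms $-(n+O(1))(\nu_j+\nu_k)$ cancel between the two products, leaving a ratio $1+O(1/n)+O(1/m)+O(n/m^2)=1+O(1/n)$. With that cancellation made explicit, the rest of the proof goes through as written.
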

\Lemma{separablevariance} is the combination of Equation 2.11 and Equation 2.20 in \citep{med77p1}.

\section{Proofs of \Theorem{KPERFORMANCE} and \Theorem{ratefunction}}\label{apx:kperformance}
The proof of \Theorem{KPERFORMANCE} and \Theorem{ratefunction} is based on the Chernoff bound and the G\"{a}rtner-Ellis Theorem. The key step is to obtain an asymptotic approximation to the logarithmic moment generating function of the test statistic. To simplify the presentation we work with the following statistic instead of $\kstat$:
\[\kstattilde \eqdef -\kstat-n=\sum_{j=1}^m \ind\{n\Gamma^n_j=1\}-n.\]
Its logarithmic moment generating is given by 
\begin{equation}
\Lambda_{\nu, \kstattilde}(\theta)\eqdef \log\bigl(\Expect_{\nu}[\exp\{\theta \kstattilde\}]\bigr).
\end{equation}
Asymptotic approximations or bounds  to $\Lambda_{\nu, \kstattilde}(\theta)$ for $\nu \in \bmPmub$ and $\nu \notin \bmPmub$ are presented in Appendix~\ref{subsec:LambdanuA} and \ref{subsec:LambdanuB}.

\subsection{Approximation to the logarithmic moment generating function for distributions in $\bmPmub$}\label{subsec:LambdanuA}
Bounds and approximations for $\Lambda_{\nu, \kstattilde}$ are first obtained for the restricted set of distributions $\bmPmub$ defined in \eqref{e:defbmPmub}.
\begin{proposition}\label{T:ASYMLMGFK}
For any $\nu \in \bmPmub$, the logarithmic moment generating function for the statistic $\kstattilde$ has the following asymptotic expansion
\begin{equation}\label{e:lmgfKestimate1}
\Lambda_{\nu, \kstattilde}(\theta)=\half \frac{n^2}{m}\bigl(m\sum_{j=1}^m\nu_j^2\bigr)(e^{-2\theta}-1)+O(\frac{n^3}{m^2})+O(1).
\end{equation}
\end{proposition}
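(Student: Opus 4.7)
The plan is to compute $\Lambda_{\nu,\kstattilde}(\theta) = \log\Expect_\nu[e^{\theta(W-n)}]$, where $W = \sum_{j=1}^m \ind\{N_j=1\}$ counts singletons of the multinomial$(n,\nu)$ vector $(N_1,\ldots,N_m)$. Starting from the product identity $e^{\theta W} = \prod_j[1 + (e^\theta-1)\ind\{N_j=1\}]$ together with the multinomial exponential generating function, one obtains
\[
\Phi(z) \;:=\; \sum_{k\geq 0}\Expect_\nu^{(k)}[e^{\theta W}]\frac{z^k}{k!} \;=\; e^z\prod_{j=1}^m\bigl(1 + (e^\theta-1)\nu_j z\, e^{-\nu_j z}\bigr),
\]
so that $\Expect_\nu^{(n)}[e^{\theta W}] = n![z^n]\Phi(z)$. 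I would extract this coefficient by the saddle-point method; this de-Poissonization step is exactly what converts the factor $e^{2\theta}-1$ that would be produced by a naive Poissonized computation into the $e^{-2\theta}-1$ appearing in the claim.

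The first step is to expand $\Psi(z) := \log\Phi(z)$ for $|z| = O(n)$. Under $\nu\in\bmPmub$ the bound $\nu_j z = O(n/m) = o(1)$ is uniform in $j$, so Taylor-expanding $\log(1 + A_j) = A_j - \half A_j^2 + O(A_j^3)$ with $A_j = (e^\theta-1)\nu_j z e^{-\nu_j z}$, then $e^{-\nu_j z} = 1 - \nu_j z + O((\nu_j z)^2)$, and using $\sum_j \nu_j = 1$ together with the bound $\sum_j \nu_j^3 \leq \mub^2/m^2$ for the remainders, yields
\[
\Psi(z) \;=\; z e^\theta - \tfrac{1}{2}z^2(e^{2\theta}-1)c_\nu + O(n^3/m^2),\qquad c_\nu := \sum_{j=1}^m \nu_j^2.
\]
The algebraic identity $(e^\theta-1) + \half(e^\theta-1)^2 = \half(e^{2\theta}-1)$ is what combines the two quadratic contributions into a single factor.

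Saddle-point analysis of $n![z^n]\Phi(z) = (n!/2\pi i)\oint \Phi(z) z^{-n-1}\,dz$ completes the proof. The stationary point of $\Psi(z) - n\log z$ is $z^* = n e^{-\theta}(1 + \delta)$ with $\delta = n c_\nu(1 - e^{-2\theta}) + O((n/m)^2)$, and substituting produces the critical cancellation $\tfrac{1}{2} n^2 c_\nu(1 - e^{-2\theta})$ from $\Psi(z^*)$ against $-n^2 c_\nu(1 - e^{-2\theta})$ coming from $-n\log(1+\delta)$, leaving $\tfrac{1}{2} n^2 c_\nu(e^{-2\theta}-1)$ with the sign now flipped. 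Combining this with Stirling, $\log n! = n\log n - n + \half\log(2\pi n) + O(1/n)$, and with the Gaussian-width contribution from the saddle (of variance $\sigma^2 = n(1 + O(n/m))$, so that the two $\half\log(2\pi n)$ terms cancel up to $O(1)$) gives
\[
\log\Expect_\nu^{(n)}[e^{\theta W}] \;=\; n\theta + \tfrac{1}{2} n^2 (e^{-2\theta}-1) c_\nu + O(n^3/m^2) + O(1),
\]
and subtracting $n\theta$ (since $e^{\theta\kstattilde} = e^{-n\theta}e^{\theta W}$) delivers the claim. The main obstacle is the rigorous justification of the saddle-point estimate: one must verify that $\Phi$ is Hayman-admissible in this regime (so that the off-peak contribution to the contour integral along $|z| = z^*$ is negligible) and that the expansions and error estimates above hold uniformly in $\nu \in \bmPmub$ and in $\theta$ over the range that is used in the subsequent Chernoff bound.
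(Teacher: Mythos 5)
Your proposal is correct and follows essentially the same route as the paper: the exponential generating function $\Phi(z)=e^z\prod_j\bigl(1+(e^\theta-1)\nu_j z e^{-\nu_j z}\bigr)$ is exactly the paper's Poissonization identity, the saddle location $z^*=ne^{-\theta}(1+\delta)$ with $\delta=nc_\nu(1-e^{-2\theta})+O((n/m)^2)$ matches the paper's \eqref{e:lambdawexpression}--\eqref{e:wvalue}, and the cancellation you describe is the same one that produces the paper's final expression. The one step you defer --- justifying that the off-peak part of the contour integral is negligible uniformly in $\nu$ and $\theta$ --- is where the paper's proof spends most of its effort (explicit splitting of the contour into $|\psi|\le\pi/3$ and its complement, with bounds on $|h(\psi)|$, $\Rec(H'')$ and $\Im(H'')$), so that remains to be written out, but you have correctly identified it as the remaining technical work.
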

The approximation errors $O(\frac{n^3}{m^2})$ and $O(1)$ are uniform over the set $\bmPmub$. 

We first demonstrate how to obtain a simple but not tight enough bound, given in  \eqref{e:simpleboundonLambda}. We then give the details of a proof to obtain a tigher bound. Both proofs use the Poissonization technique, and the procedure is applicable for many separable statistics including $\kstat$: 

Let $\{X_j\}$ be a sequence of independent Poisson random variables with parameter $\lambda \nu_j$ for some $\lambda >0$. Then for any integers $u_1, \ldots, u_m$ satisfying $\sum_{j=1}^m u_j=n$, we have
\begin{equation}
\Prob\{n\Gamma^n_j=u_j, \textrm{for all $j$} \}\!=\!\Prob\{X_j=u_j, \textrm{for all $j$} | \sum_{j=1}^m X_j=n\}.\nonumber
\end{equation}
Therefore, the moment generating function of a separable statistic $\sum_{j=1}^m f_j(n\Gamma^n_j)$ admits the following representation: 
\begin{equation}\label{e:relationshipmgf}
\begin{aligned}
&\Expect_{\nu}[\exp\{ \!\theta\sum_{j=1}^m \!f_j(n\Gamma^n_j)\}]\\=&\Expect[\exp\{ \theta \sum_{j=1}^m \!f_j(X_j)\}| \sum_{j=1}^m \!X_j\!=\!n].\end{aligned}
\end{equation}
The moment generating function $A_\lambda(\theta)$ for $\sum_{j=1}^m f_j(X_j)$ is given by
\[
A_\lambda(\theta)\!\eqdef\! \Expect[\exp\{ \theta \sum_{j=1}^m f_j(X_j)\},
\]
and is easy to calculate. 

A simple bound on the moment generating function of $\sum_{j=1}^m f_j(n\Gamma^n_j)$ can then be obtained from \eqref{e:relationshipmgf} using the argument in in \cite{bar89p107}:
\begin{equation}\Expect_{\nu}[\exp\{ \theta\sum_{j=1}^m f_j(n\Gamma^n_j)\}]\leq  A_\lambda(\theta)/P\{\sum_{j=1}^m X_j=n\}.\label{e:simpleboundonLambda}\end{equation} However, this bound is not tight enough for the whole range $m=o(n^2)$ and $n=o(m)$ to prove \Theorem{KPERFORMANCE}.

A tighter approximationcan be obtained using the following relationship: \[
\begin{aligned}
&A_\lambda(\theta)\!=\! \Expect[\exp\{ \theta \sum_{j=1}^m f_j(X_j)\}]\\&=\sum_{n=0}^\infty \frac{\lambda^n}{n!}e^{-\lambda}\Expect[\exp\{ \theta \sum_{j=1}^m f_j(X_j)\}| \sum_{j=1}^m X_j=n]\\&=\sum_{n=0}^\infty \frac{\lambda^n}{n!}e^{-\lambda}\Expect_\nu[\exp\{\theta \sum_{j=1}^m f_j(n\Gamma^n_j)\}].\end{aligned}\]
It follows from the independence of  the variables $\{X_j\}$ that the moment generating function $A_\lambda(\theta)$ has the following formula:
\[A_\lambda(\theta)=\prod_{j=1}^m (\sum_{k=0}^\infty \frac{(\lambda \nu_j)^k}{k!} e^{-\lambda \nu_j} e^{\theta f_j(k)}).\]
Since $A_\lambda(\theta)$ is analytic in $\lambda$,   the moment generating function of $\sum_{j=1}^m f_j(n\Gamma^n_j)$ can be obtained  via Cauchy's theorem: 
\begin{equation}\label{e:formulamgf0}
\Expect_{\nu}[\exp\{\theta \sum_{j=1}^m f_j(n\Gamma^n_j)\}]=\frac{n!}{2\pori} \oint e^{\lambda} A_\lambda(\theta) \frac{d\lambda}{\lambda^{n+1}},
\end{equation}
where the integration is carried out along any closed contour around $\lambda=0$ in the complex plane. These arguments lead to the following lemma:
\begin{lemma}\label{t:poissonization}
The moment generating function of the separable statistic $\sum_{j=1}^m f_j(n\Gamma^n_j)$ is given by
\begin{equation}
\begin{aligned}
&\Expect_{\nu}[\exp\{\theta \sum_{j=1}^m f_j(n\Gamma^n_j)\}]\\
&=\frac{n!}{2\pori } \oint e^{\lambda} \prod_{j=1}^m \big(\sum_{k=0}^\infty \frac{(\lambda \nu_j)^k}{k!}e^{-\lambda \nu_j} e^{\theta f_j(k)}  \big)  \frac{d\lambda}{\lambda^{n+1}}.\nonumber
\end{aligned}
\end{equation}
\end{lemma}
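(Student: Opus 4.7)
The plan is to formalize the Poissonization argument already sketched in the prose preceding the lemma. Let $X_1, \ldots, X_m$ be independent with $X_j \sim \mathrm{Poisson}(\lambda \nu_j)$, so that $\sum_j X_j \sim \mathrm{Poisson}(\lambda)$ by independence. For nonnegative integers $u_1, \ldots, u_m$ with $\sum_j u_j = n$, dividing the joint Poisson PMF by the marginal PMF of $\sum_j X_j$ gives
\[
\Prob\{X_j = u_j \text{ for all } j \mid \textstyle\sum_j X_j = n\} = \frac{n!}{\prod_j u_j!} \prod_j \nu_j^{u_j},
\]
which is exactly the multinomial law of $(n\Gamma^n_1, \ldots, n\Gamma^n_m)$. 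Consequently
\[
\Expect_\nu\!\bigl[\exp\{\theta \textstyle\sum_j f_j(n\Gamma^n_j)\}\bigr] = \Expect\!\bigl[\exp\{\theta \textstyle\sum_j f_j(X_j)\} \bigm| \sum_j X_j = n\bigr].
\]

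Next I would compute the unconditional moment generating function $A_\lambda(\theta) \eqdef \Expect[\exp\{\theta \sum_j f_j(X_j)\}]$ in two ways. By independence of the $X_j$,
\[
A_\lambda(\theta) = \prod_{j=1}^m \sum_{k=0}^\infty \frac{(\lambda \nu_j)^k}{k!} e^{-\lambda \nu_j} e^{\theta f_j(k)}.
\]
Conditioning instead on the Poisson-distributed sum and substituting the previous identity,
\[
A_\lambda(\theta) = \sum_{n=0}^\infty \frac{\lambda^n}{n!} e^{-\lambda} \Expect_\nu\!\bigl[\exp\{\theta \textstyle\sum_j f_j(n\Gamma^n_j)\}\bigr],
\]
so $e^\lambda A_\lambda(\theta) = \sum_{n \ge 0} \lambda^n (n!)^{-1} \Expect_\nu[\exp\{\theta \sum_j f_j(n\Gamma^n_j)\}]$ is a power series in $\lambda$ whose $n$-th Taylor coefficient, multiplied by $n!$, is exactly the target moment generating function.

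The final step is to extract that coefficient by Cauchy's integral formula: for any contour encircling the origin,
\[
\Expect_\nu\!\bigl[\exp\{\theta \textstyle\sum_j f_j(n\Gamma^n_j)\}\bigr] = \frac{n!}{2\pi i} \oint e^\lambda A_\lambda(\theta) \frac{d\lambda}{\lambda^{n+1}},
\]
and substituting the product formula for $A_\lambda(\theta)$ yields the claimed identity (up to the paper's notational convention for $2\pi i$).

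The main obstacle is justifying entirety of $A_\lambda(\theta)$ in $\lambda$, since this is what legitimises both the Cauchy representation on an arbitrary contour and the exchange of summation with expectation used to derive the two expressions for $A_\lambda(\theta)$. Under the growth bound $|f_j(x)| \le a_0 e^{a_0 x}$ invoked by the neighbouring moment lemmas, each inner series $\sum_k (\lambda \nu_j)^k e^{\theta f_j(k)}/k!$ is majorised by $\sum_k (|\lambda| \nu_j e^{a_0 + |\theta| a_0})^k/k!$, so it converges absolutely for every $\lambda \in \mathbb{C}$ and defines an entire function of $\lambda$; the finite product over $j \in [m]$ inherits analyticity. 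Dominated convergence then justifies both the product representation for $A_\lambda(\theta)$ and term-by-term coefficient extraction, completing the proof.
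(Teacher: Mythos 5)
Your argument is exactly the paper's own derivation (the paper proves this lemma in the prose immediately preceding it): Poissonize with independent $X_j\sim\mathrm{Poisson}(\lambda\nu_j)$, identify the conditional law given $\sum_j X_j=n$ with the multinomial law of $(n\Gamma^n_1,\dots,n\Gamma^n_m)$, compute $A_\lambda(\theta)$ both as a product over $j$ and as a Poisson mixture over $n$, and extract the $n$-th coefficient by Cauchy's formula. That part is correct and matches the paper, which itself only asserts analyticity of $A_\lambda(\theta)$ without justification.

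The one flaw is in your final paragraph. The bound $|f_j(x)|\le a_0e^{a_0x}$ controls $f_j$, not $e^{\theta f_j}$: it gives only $|e^{\theta f_j(k)}|\le \exp\{|\theta|a_0e^{a_0k}\}$, which is doubly exponential in $k$, not the geometric bound $e^{(a_0+|\theta|a_0)k}$ you claim. Your majorant $\sum_k(|\lambda|\nu_je^{a_0+|\theta|a_0})^k/k!$ therefore does not dominate the series; indeed, for $f_j(k)=a_0e^{a_0k}$ and any $\theta>0$ the inner series $\sum_k(\lambda\nu_j)^ke^{\theta f_j(k)}/k!$ diverges, since $\exp\{\theta a_0e^{a_0k}\}$ outgrows $k!$. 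So entirety of $A_\lambda(\theta)$ cannot be deduced from the growth hypothesis of the moment lemmas. The correct justification is that the lemma is only ever applied to statistics with bounded $f_j$ (the coincidence statistics, $\tstat$, and $S^{(2)}$ all have $\sup_{j,k}|f_j(k)|<\infty$), or more generally to $f_j$ of at most linear growth in $k$, in which case $|e^{\theta f_j(k)}|\le Ce^{ck}$ for constants depending on $\theta$, each inner series is entire in $\lambda$, and the finite product and the coefficient extraction go through exactly as you describe.
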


\begin{proof}[Proof of Proposition~\ref{T:ASYMLMGFK}]
Applying \Lemma{poissonization} with $f_j(1)=1, f_j(k)=0 \textrm{ for $k\neq 1$}$, we obtain
\begin{equation}\label{e:formulamgf2}
\Expect_{\nu}[\exp\{\theta (\kstattilde)\}]=e^{-\theta n}\frac{n!}{2\pi i} \oint g(\lambda)d\lambda
\end{equation}
where 
\begin{equation}
g(\lambda)=e^{\lambda} \prod_{j=1}^m (1-(\lambda \nu_j)e^{-\lambda \nu_j}+(\lambda \nu_j)e^{-\lambda \nu_j} e^{\theta })\frac{1}{\lambda^{n+1}}.\nonumber\end{equation} 
The rest of the proof is an application of the saddle point method \citep{bru81}. It consists of two steps: The first step is to pick a particular contour around $\lambda=0$ to carry out the integration. It is desirable to have a contour along which $g(\lambda)$ behaves violently: $g(\lambda)$ is large on a small interval on the contour and significantly smaller at the rest, so that the value of integral can be approximated by integrating over this small interval. Such a contour can be found,  by identifying a \emph{saddle point} of $g(\lambda)$ at which the derivative of $g(\lambda)$ vanishes, and then pick a contour that goes through the saddle point. 
The second step is to apply the Laplace method to estimate the integral along the contour. 

We now apply the first step of the saddle point method: identifying the saddle point and defining the contour for integration. Note that the derivative of $g$ is given by
\[\frac{d}{d \lambda} g(\lambda)=g(\lambda) [\sum_{j=1}^m \frac{\nu_j(e^{\theta }-1+e^{\lambda \nu_j})}{\lambda \nu_j(e^{\theta }-1)+e^{\lambda \nu_j} }-\frac{n+1}{\lambda}].\]
To simplify the derivation, we select a point that is close to a saddle point, defined as the solution to 
\begin{equation}\label{e:saddlepoint}
\sum_{j=1}^m \frac{\lambda \nu_j(e^{\theta }-1+e^{\lambda \nu_j})}{\lambda \nu_j(e^{\theta }-1)+e^{\lambda \nu_j} }=n.\end{equation}
If $\lambda$ on the left-hand side was taken to be a saddle point, then the right-hand side would be $n+1$ instead of $n$, and we will see this error is negligible for our purposes. 

Equation \eqref{e:saddlepoint} has one unique real-valued nonnegative solution, which we denote by $\lambdab$. To see this, note that when restricting $\lambda$ to $[0, \infty)$, the left-hand-side is a continuous and strictly increasing  function of $\lambda$. Moreover, its value is $0$ when $\lambda=0$, increases to $\infty$ when $\lambda$ increases to $\infty$.

 We now obtain an asymptotic expansion of $\lambdab$. We first show that $\lambdab=O(n)$. When $\theta \geq 0$, using the fact that $0 \leq xe^{-x}\leq e^{-1}$ and $0 \leq e^{-x} \leq 1$ for $x \geq 0$, we obtain
\[\frac{1}{1+e^{-1}(e^{\theta}-1)}\leq \frac{e^{\theta }-1+e^{\lambda \nu_j}}{\lambda \nu_j(e^{\theta }-1)+e^{\lambda \nu_j}}\leq e^{\theta}.\]
Substituting this into \eqref{e:saddlepoint} leads to
\begin{equation}\label{e:boundlambda}
ne^{-\theta} \leq \lambdab\leq n(1+e^{-1}(e^{\theta}-1)).
\end{equation}
When $\theta <0$, we obtain
\[e^{\theta} \leq  \frac{e^{\theta }-1+e^{\lambda \nu_j}}{\lambda \nu_j(e^{\theta }-1)+e^{\lambda \nu_j}} \leq \frac{1}{1+e^{-1}(e^{\theta}-1)}.\]
Substituting this into \eqref{e:saddlepoint} leads to
\begin{equation}\label{e:boundlambda2}
n(1+e^{-1}(e^{\theta}-1))\leq \lambdab\leq ne^{-\theta} .
\end{equation}

It follows from the bounds \eqref{e:boundlambda}, \eqref{e:boundlambda2} and $\nu \in \bmPmub$ that $\lambdab \nu_j=o(1)$. Thus the demominator of \eqref{e:saddlepoint} satisfies 
\[\lambdab \nu_j(e^{\theta }-1)+e^{\lambdab \nu_j} =1+o(1).\] Substituting this into \eqref{e:saddlepoint} leads to
\[\sum_{j=1}^m \lambdab \nu_j(e^{\theta }-1+e^{\lambdab \nu_j})=n(1+o(1)).\]
Consequently, 
\begin{equation}\lambdab=n e^{-\theta}(1+o(1)).\nonumber
\end{equation}
To obtain a refined approximation, let $w= \lambdab e^\theta /n-1$, which implies 
\begin{equation}\label{e:lambdawexpression}
\lambdab=n e^{-\theta} (1+w).
\end{equation} An approximation  for $w$ will be obtained: Since $\lambdab \nu_j=O(\frac{n}{m})$, we have that the numerator and denominator in the summand of \eqref{e:saddlepoint} satisfy
\[
\begin{aligned}
\lambdab \nu_j(e^{\theta }-1+e^{\lambdab \nu_j}) &=\lambdab \nu_j(e^\theta + \lambdab \nu_j +O(\frac{n^2}{m^2})),\\
\lambdab \nu_j(e^{\theta }-1)+e^{\lambdab \nu_j} &=1+ \lambdab \nu_j e^\theta +O(\frac{n^2}{m^2}).
\end{aligned}\]
Thus, 
\[\begin{aligned}
&\sum_{j=1}^m \frac{\lambdab \nu_j(e^{\theta }-1+e^{\lambdab \nu_j})}{\lambdab \nu_j(e^{\theta }-1)+e^{\lambdab \nu_j} }\\
=&\sum_j [\lambdab \nu_j e^\theta +\lambdab^2 \nu_j^2 (1-e^{2\theta})+O(\frac{n^3}{m^3})].\end{aligned}\]
Substituting this and \eqref{e:lambdawexpression}  into \eqref{e:saddlepoint} leads to
\[w+ n \sum_j \nu_j^2 (1+w)^2 (e^{-2\theta}-1)=O(\frac{n^2}{m^3}),\]
which gives
\begin{equation}\label{e:wvalue}
w=n\sum_j \nu_j^2 (1-e^{-2\theta}) (1+O(\frac{n}{m}))=O(\frac{n}{m}).
\end{equation}
The integration in \eqref{e:formulamgf2} is now carried out along the closed contour given by $\lambda=\lambdab e^{i\psi}=ne^{-\theta}(1+w) e^{i \psi}$:
\begin{equation}
\begin{aligned}
\Expect_\nu[\exp\{\theta (\kstattilde)\}]
=&e^{-\theta n}\frac{n!}{2\pori} \int_{-\pi}^\pi g(\lambdab e^{i \psi}) \lambdab e^{i\psi}d\psi\\
=&\frac{n!}{2\pori}\lambdab^{-n} e^{-\theta n}\Rec\bigl[\int_{-\pi}^\pi h(\psi) d\psi\bigr].\label{e:formulamgf3}
\end{aligned}
\end{equation}
where
\begin{equation}\label{e:defh}
h(\psi)\eqdef e^{-i n \psi}\prod_{j=1}^m \bigl(\lambdab \nu_j(e^{\theta }-1)e^{i\psi}+e^{\lambdab \nu_je^{i\psi} }\bigr).
\end{equation}

We now apply the second step of the saddle point method: estimating the integral by the Laplace method. We begin with a rough estimate of $h(\psi)$.
It follows from $\lambdab=n^{-\theta}(1+o(1))$ that
\begin{equation}
\begin{aligned}
&h(\psi)\\
=&e^{-i n \psi}\prod_{j=1}^m \bigl(\lambdab \nu_j(e^{\theta }-1)e^{i\psi}\!+\!1\!+\!\lambdab \nu_je^{i\psi}\!+\!O(\frac{n^2}{m^2})\bigr)\\
=&e^{-i n \psi}\prod_{j=1}^m \bigl(1+\lambdab \nu_je^{\theta}e^{i\psi}+\!O(\frac{n^2}{m^2})\bigr)\\
=&e^{-i n \psi}\exp\{\sum_{j=1}^m \bigl(\lambdab \nu_je^{\theta}e^{i\psi}+O(\frac{n^2}{m^2})\bigr)\}\\
=&e^{-i n \psi}e^n \exp\{-n (1-e^{i\psi})+O(\frac{n^2}{m})\}.\end{aligned}\label{e:hestimate}
\end{equation}
Therefore, for any $\psi \neq 0$, $|h(\psi)|$ is exponentially smaller than the value of $h(\psi)$ at $\psi=0$. This suggests that the integral in \eqref{e:formulamgf3} can be approximated by integrating over a small interval around $\psi=0$. Split the integral in \eqref{e:formulamgf3} into three parts:
\begin{equation}\label{e:integratesplit}
\begin{aligned}
I_1&=\Rec[\int_{-\pi/3}^{\pi/3} h(\psi) d\psi],\\ I_2&=\Rec[\int_{-\pi}^{-\pi/3}\! h(\psi) d\psi],\\ I_3&=\Rec[\int_{\pi/3}^{\pi}\! h(\psi) d\psi].
\end{aligned}
\end{equation}

We first estimate $I_1$. Denote $H(\psi)=\log(h(\psi))$.  
Simple calculus gives
\begin{equation}
\begin{aligned}
H(\psi)\!=\!&-in\psi+\sum_{j=1}^m \log(\lambdab \nu_j(e^\theta-1)e^{i\psi}\\&\quad\qquad\qquad\qquad+\exp\{\lambdab \nu_j e^{i\psi}\}),\\
H'(\psi)\!=\!&-\!in\\&+\!i\!\sum_{j=1}^m\! \!\frac{\lambdab \nu_j(\!e^\theta\!-\!1\!)e^{i\psi}\!\!+\!\!\lambdab \nu_je^{i\psi}\!\exp\{\!\lambdab \nu_j e^{i\psi}\!\}}{\lambdab \nu_j(e^\theta-1)e^{i\psi}+\exp\{\lambdab \nu_j e^{i\psi}\}},\\
H''(\psi)\!=\!&-\sum_{j=1}^m\exp\{\lambdab \nu_j e^{i\psi}\} \\&\times\frac{1}{(\lambdab \nu_j(e^\theta-1)e^{i\psi}+\exp\{\lambdab \nu_j e^{i\psi}\})^2}\\
&\times \bigl(\lambdab \nu_j(e^\theta-1)e^{i\psi}(1\!-\!\lambdab \nu_je^{i\psi}\!+\!\lambdab ^2\nu_j^2e^{2i\psi}) \\&\qquad +\lambdab \nu_je^{i\psi}\exp\{\lambdab \nu_j e^{i\psi}\}\bigr).
\end{aligned}\label{e:J12formula}
\end{equation}
It is clear that $\Im(H(0))=0$. It follows from \eqref{e:saddlepoint} that $H'(0)=0$. 
Estimates of $\Rec(H(0))$ and $H''(\psi)$ are obtained from substituting \eqref{e:lambdawexpression} and \eqref{e:wvalue} into the expression of $H(\psi)$ and $H''(\psi)$ and applying asymptotic analysis. In sum, 
\begin{equation}\label{e:estimateJ}
\begin{aligned}
&\Im(H(0))\!=\!0, \\
&\Rec(H(0))\!=\!n(1+w)\!-\!\half n^2(\sum_{j=1}^m\nu_j^2) (1-e^{-2\theta })\!+\!O(\frac{n^3}{m^2}),\\
&H'(0)=0, 
H''(\psi)\!=\!-ne^{i\psi}+O(\frac{n^2}{m}).
\end{aligned}
\end{equation}

To obtain an upper-bound on $I_1$, note that for large enough $n$ and for any $\psi \in [-\pi/3, \pi/3]$, we have $\Rec(H''(\psi))\leq -0.4n$. It then follows from the mean value theorem that
\[\Rec(H(\psi)) \leq H(0)-0.2n\psi^2.\]
Consequently,  for large enough $n$ and $m$, 
\begin{equation}\label{e:I1upper-bound}
\begin{aligned}
I_1&\leq e^{H(0)}\int_{-\pi/3}^{-\pi/3} e^{-0.2 n\psi^2} d\psi \\&\leq e^{H(0)} \int_{-\infty}^{\infty} e^{-0.2 n\psi^2} d\psi = e^{H(0)}\frac{\sqrt{\pi}}{\sqrt{0.2n}}.
\end{aligned}
\end{equation}

To obtain a lower-bound on $I_1$, we begin with a bound on $\Im(H''(\psi))$: Since $\Im(H''(\psi))=-n\sin(\psi)+O(\frac{n^2}{m})$, applying $|\sin(\psi)| \!\leq \!|\psi|$, we have that for large enough $n$, for any $\psi\!\in\! [-\pi/3, \pi/3]$, $|\Im(H''(\psi))|\leq 1.1n|\psi|$. It also follows from \eqref{e:estimateJ} that $\Rec(H''(\psi))\geq -1.1n$. Applying the mean value theorem, we conclude that there exists some $c>0$ such that for $\psi \in [-\pi/3, \pi/3]$, 
\[\begin{aligned}
\Rec(H(\psi)) &\geq H(0)-1.1n\psi^2, \\
 |\Im(H(\psi))|&\leq 1.1n|\psi|^3+c\frac{n^2}{m}\psi^2.\end{aligned}\]
Use the short-hand notation $t_n=0.1 \min\{n^{\!-\!1/3},\! \sqrt{m}/(\sqrt{c}n)\}$. For $\psi \in [-t_n,t_n]$, we have $\cos(\Im(H(\psi)))\geq 0.5$, and thus $\Rec(e^{H(\psi)}) \geq 0.5e^{\Rec(H(\psi))}$. The integration for $I_1$ is further split into three parts:
\[
\begin{aligned}
I_1=&\Rec[\int_{-\pi/3}^{-t_n}e^{H(\psi)}d\psi]+\Rec[\int_{t_n}^{\pi/3}e^{H(\psi)}d\psi]\\&+\Rec[\int_{-t_n}^{t_n}e^{H(\psi)}d\psi].\end{aligned}\]
The absolute value of the first term is upper-bounded as follows:
\begin{equation}\label{e:firsttermI1}
\begin{aligned}
|\int_{-\pi/3}^{-t_n}e^{H(\psi)}d\psi |&\leq e^{H(0)} \int_{-\infty}^{-t_n}e^{-0.2n\psi^2}d\psi\\&= t_n e^{H(0)} \int_{-\infty}^{-1}e^{-0.2nt_n^2\bar{\psi}^2}d \bar{\psi}\\
&\leq t_n e^{H(0)} \int_{-\infty}^{-1}e^{-0.2nt_n^2|\bar{\psi}|}d \bar{\psi}\\&=e^{H(0)}O(\frac{1}{nt_n})=e^{H(0)}o(\frac{1}{\sqrt{n}}).
\end{aligned}
\end{equation}
The second term is bounded in a similar way.
The third term is lower-bounded as follows:
\begin{equation}
\begin{aligned}
&\Rec[\int_{-t_n}^{t_n}e^{H(\psi)}d\psi ]\\\geq& \int_{-t_n}^{t_n}0.5 e^{\Rec(H(\psi))} d\psi\geq 0.5 e^{H(0)}\int_{-t_n}^{t_n} e^{-1.1n \psi^2}d\psi\\
\geq& 0.5 e^{H(0)}[\int_{-\infty}^{\infty} e^{-1.1n \psi^2}d\psi-2\int_{-\infty}^{-t_n} e^{-1.1n \psi^2}d\psi]\\
\geq& 0.5 e^{H(0)} (\frac{\sqrt{\pi}}{\sqrt{1.1n}}+O(\frac{1}{nt_n}))= 0.5 e^{H(0)} \frac{\sqrt{\pi}}{\sqrt{1.1n}}(1+o(1)).\!
\end{aligned}\nonumber
\end{equation}
where the last inequality follows from an argument similar to \eqref{e:firsttermI1}.
Combining these bounds together, we obtain
\begin{equation}
\begin{aligned}
I_1 \geq& \Rec[\int_{-t_n}^{t_n}e^{H(\psi)}d\psi] -|\Rec[\int_{-\pi/3}^{-t_n}e^{H(\psi)}d\psi]|\\&-|\Rec[\int_{t_n}^{\pi/3}e^{H(\psi)}d\psi]| \\
\geq& e^{H(0)} \frac{0.5\sqrt{\pi}}{\sqrt{1.1n}}(1+o(1)).
\end{aligned}\nonumber
\end{equation}
Combing this and \eqref{e:I1upper-bound} leads to,
\begin{equation}\label{e:I1estimate}
I_1 = e^{H(0)} \frac{1}{\sqrt{n}}e^{O(1)}=e^{n(1+o(1))} \frac{1}{\sqrt{n}}e^{O(1)}.
\end{equation}
where the last equality follows from the estimate of $H(0)$ given in \eqref{e:estimateJ} and  \eqref{e:wvalue}.

We now estimate $I_2$ and $I_3$. 
For $\psi \in [-\pi, -\pi/3] \cup [\pi/3, \pi]$, we obtain from \eqref{e:hestimate} that $|h(\psi)|\leq \exp\{0.5n+O(\frac{n^2}{m})\}$, which implies $ 
\Rec[I_2]+\Rec[I_3] =O(e^{0.6n})$. This shows that $I_2$ and $I_3$ are much smaller than $I_1$. Thus, the integral in \eqref{e:formulamgf3} can be approximated by the estimate of $I_1$: Substituting \eqref{e:I1estimate} and \eqref{e:estimateJ} into \eqref{e:formulamgf3}, we obtain
\begin{equation}
\begin{aligned}
&\Expect_{\nu}[\exp\{\theta (\kstattilde)\}]\\
=&\frac{n!}{2\pori}\lambdab^{-n} e^{-\theta n}I_1(1+o(1)) \\
=&\frac{n!}{2\pori}\lambdab^{-n} e^{-\theta n}e^{H(0)} \frac{1}{\sqrt{n}}e^{O(1)}(1+o(1))\\
=&\frac{n!}{n^n\sqrt{2\pori n} }\bigl(1+n\sum_j \nu_j^2(1-e^{-2\theta})+O(\frac{n^2}{m^2})\bigr)^{-n}\\
&\times \exp\{\half n^2(\sum_{j=1}^m\nu_j^2) (1-e^{-2\theta })+O(\frac{n^3}{m^2})\}e^{O(1)})\\
=&\frac{n!e^{n}}{n^n \sqrt{2\pori n}}\exp\{-\half n^2(\sum_{j=1}^m\nu_j^2)(1-e^{-2\theta})+O(\frac{n^3}{m^2})\}e^{O(1)}.
\end{aligned}\nonumber
\end{equation}
Stirling formula gives $\frac{n!e^{n}}{n^n 2\pi\sqrt{n}}=1+O(\frac{1}{n})$. The claim of the proposition is obtained on taking logarithm on both sides.
\end{proof}

\subsection{Approximation to the logarithmic moment generating function for distributions not in $\bmPmub$}\label{subsec:LambdanuB}
We also need to consider distributions in $\mP \setminus \bmPmub$. 
For any $\mu \in \mP \setminus \bmPmub$, the set of indices $\mathcal{S}_0\eqdef \{j \in [m]: \mu_j \geq \mub m^{-1}\}$ is non-empty.  Now fix a small constant $\eta>0$, and consider each index $j$ in $\mathcal{S}_0$ in two separate cases, according to whether $n \mu_j \geq \eta$. Denote \begin{equation}
\supseta(\mu)=\{j: n\mu_j \geq \eta\}, \quad \beta(\mu) =\sum_{j \in \supseta(\mu)} \mu_j.\nonumber
\end{equation} 

Proposition~\ref{T:UPPER-BOUNDAPPROXIMATELMGF} below addresses the case where $\beta(\mu)$ is large. It implies that the probability of missed detection associated with such a distribution is much smaller than that associated with the worst-case distributions: The probability decays exponentially fast with respect to $n$, which is larger than $n^2/m$. Proposition~\ref{T:UPPER-BOUNDAPPROXIMATELMGF2} considers the alternate case, and shows that if $\beta(\mu)$ is not large, then a bound similar to that in Proposition~\ref{T:ASYMLMGFK} holds. 
\begin{proposition}\label{T:UPPER-BOUNDAPPROXIMATELMGF}
For all sufficiently small $\eta>0$, any $\theta \in (0,0.5]$, and any $\underline{\beta}>0$, there exists $n_0$ such that for any $n>n_0$, and any $\nu$ satisfying  $\beta(\nu) \geq \underline{\beta}$, the following holds,
\[\Lambda_{\nu, \kstattilde}(\theta)\leq - \beta(\nu) \alpha(\theta) n,\]
where $\alpha(\theta) >0 $.
\end{proposition}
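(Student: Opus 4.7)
My plan is to avoid the delicate contour/saddle-point argument used in the proof of Proposition~\ref{T:ASYMLMGFK} and instead rely on a cheap Cauchy coefficient bound. Specializing \Lemma{poissonization} to $f_j(1)=1$ and $f_j(k)=0$ otherwise, and absorbing the factor $e^\lambda=\prod_j e^{\lambda\nu_j}$ into the product (using $\sum_j\nu_j=1$), one obtains
\[\Expect_\nu\bigl[\exp\bigl\{\theta{\textstyle\sum_j}\ind\{n\Gamma_j^n=1\}\bigr\}\bigr]\;=\;n!\,[\lambda^n]G(\lambda),\qquad G(\lambda)\eqdef\prod_{j=1}^m\bigl(e^{\lambda\nu_j}+(e^\theta-1)\lambda\nu_j\bigr),\]
where $[\lambda^n]$ denotes the $n$-th Taylor coefficient at $0$. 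For $\theta>0$ each factor of $G$ has only nonnegative Taylor coefficients, hence so does $G$, and the elementary inequality $[\lambda^n]G(\lambda)\le G(R)/R^n$ holds for every real $R>0$. This positivity is the only analytic input; no steepest-descent contour is required.

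Guided by the saddle $\lambdab\sim ne^{-\theta}$ identified in the proof of Proposition~\ref{T:ASYMLMGFK}, I would set $R=ne^{-\theta}$. Factoring $e^{\lambda\nu_j}$ out of each factor of $G(R)$ and using $\log(1+x)\le x$ together with $\sum_j R\nu_j=R$ gives
\[\log G(R)\;\le\;R+(e^\theta-1)\sum_{j=1}^m R\nu_j e^{-R\nu_j}.\]
The crux is then to split the right-hand sum according to whether $j\in\supseta(\nu)$: for $j\in\supseta(\nu)$ one has $R\nu_j\ge \eta e^{-\theta}$ so that $e^{-R\nu_j}\le e^{-\eta e^{-\theta}}$, whereas for $j\notin\supseta(\nu)$ the trivial bound $e^{-R\nu_j}\le 1$ suffices. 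Using $\sum_{j\in\supseta(\nu)}R\nu_j=R\beta(\nu)$, this produces
\[\sum_j R\nu_j e^{-R\nu_j}\;\le\;R\bigl(1-\beta(\nu)(1-e^{-\eta e^{-\theta}})\bigr).\]

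Assembling everything with Stirling's formula $\log n!=n\log n-n+O(\log n)$ and the prefactor $e^{-\theta n}$ arising from $\kstattilde=\sum_j\ind\{n\Gamma_j^n=1\}-n$, the three $O(n)$ contributions — namely $-\theta n$ from the prefactor, $\log(n!/R^n)=-n+n\theta+O(\log n)$ from Stirling, and the leading $R(e^\theta-1)$ piece of $\log G(R)$ — cancel exactly (the $n(e^{-\theta}-1)$ and $n(1-e^{-\theta})$ contributions appear with opposite signs), and only the $\beta(\nu)$-weighted term survives:
\[\Lambda_{\nu,\kstattilde}(\theta)\;\le\;-n(1-e^{-\theta})\bigl(1-e^{-\eta e^{-\theta}}\bigr)\beta(\nu)+O(\log n).\]
Setting $\alpha(\theta)\eqdef\half(1-e^{-\theta})(1-e^{-\eta e^{-\theta}})>0$ and choosing $n_0$ large enough that the $O(\log n)$ remainder is dominated by $\half(1-e^{-\theta})(1-e^{-\eta e^{-\theta}})\underline\beta\,n$ completes the argument; this is the one point where the uniform lower bound $\beta(\nu)\ge\underline\beta$ enters. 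The main obstacle is the algebraic bookkeeping of this exact $O(n)$ cancellation together with the careful heavy/light partition of the sum; no further analytic refinement is needed, since positivity of the Taylor coefficients of $G$ already makes Cauchy's inequality sharp enough at scale $O(n)$, which is all that is required here because the target bound is only linear in $n$.
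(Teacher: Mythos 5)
Your proof is correct, and it takes a genuinely different and substantially more elementary route than the paper's. The paper reaches essentially the same intermediate bound $\Expect_{\nu}[\exp\{\theta\kstattilde\}]\le n!\,\lambdab^{-n}e^{-\theta n}e^{H(0)}$ (note $e^{H(0)}=G(\lambdab)$ in your notation), but it gets there by parametrizing a contour through an approximate saddle point $\lambdab$ defined implicitly by \eqref{e:saddlepoint}, proving $\Rec(H(\psi))\le H(0)$ via a convexity argument in $\cos\psi$, and then extracting the negative $O(n)$ term from an asymptotic expansion of $H(0)$ involving the quantities $D_j$, the correction $w$, Jensen's inequality, and the monotonicity of an auxiliary function $t(x)$. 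You instead observe that $G$ has nonnegative Taylor coefficients for $\theta>0$, so the Chernoff-type coefficient bound $[\lambda^n]G\le G(R)/R^n$ holds for \emph{any} $R>0$; taking the explicit $R=ne^{-\theta}$ (rather than solving the saddle equation), using $\log(1+x)\le x$, and splitting the sum over $\supseta(\nu)$ versus its complement yields the exact cancellation of the $O(n)$ terms and the surviving $-n(1-e^{-\theta})(1-e^{-\eta e^{-\theta}})\beta(\nu)$, which I have checked. What your approach buys is brevity and robustness: no contour geometry, no asymptotics of $\lambdab$ or $w$, a cleaner and manifestly positive $\alpha(\theta)$, and in fact no need for the restriction $\theta\le 0.5$. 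What it gives up is precision—the paper's machinery is shared with Propositions~\ref{T:ASYMLMGFK} and~\ref{T:UPPER-BOUNDAPPROXIMATELMGF2}, where the sharp constant and the $1/\sqrt{n}$ prefactor matter, whereas your Cauchy bound is off by a polynomial factor; as you correctly note, that is irrelevant here since only a linear-in-$n$ upper bound is needed, and the $O(\log n)$ Stirling remainder is uniform in $\nu$ and absorbed using $\beta(\nu)\ge\underline{\beta}$ exactly as the quantifier structure of the proposition requires.
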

\begin{proposition}\label{T:UPPER-BOUNDAPPROXIMATELMGF2}
For any $\delta>0$, $\theta \in (0,0.5]$, $\overline{\eta}>0$, there exist $\eta \in (0,\overline{\eta})$, $\overline{\beta}>0$, and $n_0$ such that for any $n>n_0$, and any $\nu$ satisfying $\beta(\nu) \leq \overline{\beta}$, the following holds,
\[\Lambda_{\nu, \kstattilde}(\theta)\leq \half \frac{n^2}{m}(m\sum_{j \notin \supseta(\nu)}\nu_j^2)(e^{-2\theta}-1)(1-\delta).\]
\end{proposition}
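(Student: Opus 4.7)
The plan is to adapt the Poissonization and saddle-point argument from the proof of Proposition~\ref{T:ASYMLMGFK}, splitting the product in the contour integral of Lemma~\ref{t:poissonization} according to whether $j\in\supseta(\nu)$. Apply the lemma with $f_j(k)=\ind\{k=1\}$ to express $\Expect_\nu[e^{\theta T}]=\frac{n!}{2\pi i}\oint e^\lambda\prod_j B_j(\lambda)\,\lambda^{-n-1}\,d\lambda$, where $T=\kstattilde+n$ and $B_j(\lambda)=1+\lambda\nu_j(e^\theta-1)e^{-\lambda\nu_j}$. I integrate along $\lambda=\lambdab e^{i\psi}$ with the leading-order saddle $\lambdab=ne^{-\theta}(1+o(1))$, and the key task is to upper-bound the value of the log-integrand $H(0)=\lambdab+\sum_j\log B_j(\lambdab)$ by estimating the heavy and light contributions separately.

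For light indices ($j\notin\supseta$), $\lambdab\nu_j\leq\lambdab\eta/n=O(\eta)$ is small, so the Taylor expansion
\[\log B_j(\lambdab)=\lambdab\nu_j(e^\theta-1)-\half(\lambdab\nu_j)^2(e^{2\theta}-1)+O((\lambdab\nu_j)^3)\]
has cubic remainder $O(\eta(\lambdab\nu_j)^2)$, and summation gives
\[\sum_{j\notin\supseta}\log B_j(\lambdab)=\lambdab(1-\beta)(e^\theta-1)-\half\lambdab^2(e^{2\theta}-1)\sum_{j\notin\supseta}\nu_j^2\,(1+O(\eta)).\]
For heavy indices ($j\in\supseta$), no Taylor expansion is available, but the elementary bounds $\log(1+x)\leq x$ (valid since $x\geq 0$ when $\theta>0$) and $e^{-\lambdab\nu_j}\leq 1$ yield
\[\sum_{j\in\supseta}\log B_j(\lambdab)\leq(e^\theta-1)\lambdab\sum_{j\in\supseta}\nu_j\,e^{-\lambdab\nu_j}\leq(e^\theta-1)\lambdab\beta.\]

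The crucial observation is the cancellation of the first-order terms: the light-index contribution $\lambdab(1-\beta)(e^\theta-1)$ and the heavy-index contribution $\lambdab\beta(e^\theta-1)$ combine to $\lambdab(e^\theta-1)$, exactly reproducing what appears in the proof of Proposition~\ref{T:ASYMLMGFK} when $\nu\in\bmPmub$. Consequently, the heavy indices contribute only to first order in $\nu$, while the quadratic term involves only $\sum_{j\notin\supseta}\nu_j^2$, as required by the proposition statement. The remainder of the saddle-point / Laplace analysis---estimates for $H''(\psi)$, smallness of $|h(\psi)|$ away from $\psi=0$, and Stirling's formula applied to the $n!/\lambdab^n$ prefactor---proceeds exactly as in the proof of Proposition~\ref{T:ASYMLMGFK} and yields
\[\log\Expect_\nu[e^{\theta T}]\leq\theta n+\half n^2(e^{-2\theta}-1)\sum_{j\notin\supseta}\nu_j^2\,(1+O(\eta))+O(1).\]
Subtracting $\theta n$, choosing $\eta$ small enough (depending on $\delta,\theta$) that the multiplicative factor $(1+O(\eta))$ on the negative main term is absorbed into the factor $(1-\delta)$, and noting that by Cauchy--Schwarz $\sum_{j\notin\supseta}\nu_j^2\geq(1-\bar\beta)^2/m$, so the main term is of order $\Omega(n^2/m)\to\infty$ and dominates the $O(1)$, yields the claimed inequality.

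The main obstacle is verifying that the leading-order saddle $\lambdab=ne^{-\theta}$ remains approximately correct when the heavy indices perturb the exact saddle-point equation. This is handled by showing that the heavy-index contribution to $d\log g/d\lambda$ at $\lambdab$ is $O(\beta)$ and can be made arbitrarily small by taking $\bar\beta$ small, so the induced shift in the true saddle contributes only higher-order corrections beyond the $(1+O(\eta))$ factor already tracked. A secondary technical point is making the cubic-remainder estimate in the light-index Taylor expansion uniform in $\lambdab\nu_j\in[0,\eta e^{-\theta}]$; this requires $\eta\leq\bar\eta$ to be below an absolute constant so that the quadratic coefficient in the Taylor expansion properly controls the remainder.
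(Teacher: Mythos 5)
Your overall architecture is sound and your treatment of the heavy indices is genuinely different from the paper's: where the paper tracks the heavy symbols' effect on the saddle point through the quantities $D_j$ of \eqref{e:equationD} and must then verify the delicate sign condition \eqref{e:mgfboundalastterm} (that the residual heavy contribution $\frac{a^2+b}{1+a}$ with $a=\sum_{j\in\supseta}\nu_j(D_j-1)$, $b=\sum_{j\in\supseta}\nu_j(1-e^{-\lambdab\nu_j})(e^{-\theta}-1)$ is at most $-\beta\alpha(\theta)\leq 0$), you bound each heavy factor by $e^{\lambdab\nu_j e^\theta}$ via $\log(1+x)\leq x$ and $e^{-\lambdab\nu_j}\leq 1$ and let the first-order terms cancel exactly against the light-index linear term and the $\lambdab^{-n}$ prefactor. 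That cancellation is correct and, if set up carefully, does eliminate the $D_j$ bookkeeping.

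However, your stated justification for the saddle-point issue is wrong, and this is the genuine gap. The heavy indices shift the true saddle by a \emph{relative} amount of order $\beta(\nu)\leq\overline{\beta}$ (cf.\ \eqref{e:lambdabw}), and a relative displacement $u$ of the integration radius from the true saddle costs $O(nu^2)$ in the exponent, since the curvature there is of order $n$. For a fixed constant $\overline{\beta}>0$ this is $\Theta(n\overline{\beta}^2)$, which is \emph{not} ``higher-order beyond the $(1+O(\eta))$ factor'': the main term is only $\Theta(n^2\sum_{j\notin\supseta}\nu_j^2)=\Theta(n^2/m)=o(n)$, so $n\overline{\beta}^2$ swamps it for every fixed $\overline{\beta}$. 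Concretely, if you combine your crude heavy bound with the \emph{true} saddle $\lambdab=\frac{ne^{-\theta}}{1+a}(1+w)$, the term $-n\log\lambdab+\lambdab e^\theta$ produces a residue $n(a^2/2-wa)(1+o(1))>0$ of order $n\beta^2$ that you have discarded the (negative, first-order) term $nb$ needed to absorb — that term is exactly what the paper's $e^{-\lambdab\nu_j}$ factors retain. Your argument is rescued only by committing to the \emph{light-only} contour $\rho=ne^{-\theta}(1+w_0)$, which is not the saddle; this is legitimate for an upper bound because $H'(0;\rho)$ is purely imaginary (it equals $i(S-n)$), so $\Rec H(\psi)\leq H(0)-cn\psi^2$ still holds and the Laplace upper bound on $I_1$ does not require $H'(0)=0$. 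You need to say this explicitly; as written, ``the induced shift contributes only higher-order corrections'' is false. A second, smaller gap: the $H''$ estimate and the tail bounds on $|h(\psi)|$ for $|\psi|\geq\pi/3$ do \emph{not} proceed ``exactly as in'' Proposition~\ref{T:ASYMLMGFK}, whose expansions rely on $\lambdab\nu_j=o(1)$ for every $j$; the heavy indices need the separate bounds the paper supplies (the inequality \eqref{e:Jpsibound} for $\psi$ away from $0$, and the explicit $|H''(\psi)|\leq 100\overline{\beta}n(1+o^\eta(1))$ control of the heavy contribution to the curvature). These are fixable with the same elementary inequalities you already use, but they must be done.
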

The proofs of Proposition~\ref{T:UPPER-BOUNDAPPROXIMATELMGF} and Proposition~\ref{T:UPPER-BOUNDAPPROXIMATELMGF2} use steps similar to those leading to the upper-bound in Proposition~\ref{T:ASYMLMGFK}. However, the approximation given by  \eqref{e:lambdawexpression} and \eqref{e:wvalue} is no longer valid, so a different approximation is required. The conclusions on the existence and uniqueness of the solution $\lambdab$ and the bounds in \eqref{e:boundlambda} are still valid, and our proof starts from there. 

To simplify the presentation, we use the following notation similar to the small ``$o$" notation: We write $x=o^\eta(1)$ whenever there exists a function $s(\eta)$  that does not depend on $\theta$, $n$, and $\nu$, such that $|x|\leq s(\eta)$ and $\lim_{\eta \rightarrow 0}s(\eta)=0$. 

Consider any $\eta$ and $\nu$. Write $\supseta=\supseta(\nu)$. For any $j \notin \supseta$, we obtain the expansion of the summand in \eqref{e:saddlepoint} via the mean value theorem:
\[
\frac{\lambdab \nu_j(e^{\theta }-1+e^{\lambdab \nu_j})}{\lambdab \nu_j(e^{\theta }-1)+e^{\lambdab \nu_j} }=\lambdab \nu_j e^{\theta}+\lambdab^2 \nu_j^2(1-e^{2\theta})(1+o^\eta(1)).
\]
For any $j \in \supseta$, the following equality holds:
\[
\frac{\lambdab \nu_j(e^{\theta }-1+e^{\lambdab \nu_j})}{\lambdab \nu_j(e^{\theta }-1)+e^{\lambdab \nu_j} }=D_j  \lambdab \nu_j e^{\theta},
\]
where  
\begin{equation}\label{e:equationD}
D_j \eqdef \frac{e^{-\theta }+e^{-\lambdab \nu_j}(1-e^{-\theta })}{1+\lambdab \nu_je^{-\lambdab \nu_j} (e^{\theta }-1)} \geq e^{-2\theta}.
\end{equation}

Substituting these estimates into \eqref{e:saddlepoint} leads to
\begin{equation}
\lambdab(1+\!\sum_{j\in \supseta}\!\!\nu_j (D_j-1))e^{\theta}+\lambdab^2\! \sum_{j  \notin \supseta} \!\!\nu_j^2 (1-e^{2\theta})(1+o^\eta(1))=n.\nonumber
\end{equation}
Combining this with $\lambdab \sum_{j  \notin \supseta} \nu_j^2 \leq \eta\sum_{j  \notin \supseta} \nu_j \leq\eta$ gives,
\[
\lambdab=\frac{ne^{-\theta}}{1+\sum_{j\in \supseta}\nu_j (D_j-1)}(1+o^\eta(1)).
\]
We now substitute this estimate into the previous equation, and introduce a variable $w$ as before, 
\begin{equation}\label{e:lambdabw}
\lambdab=\frac{ne^{-\theta}}{1+\sum_{j\in \supseta}\nu_j (D_j-1)}(1+w).
\end{equation} 
We obtain
\begin{equation}\label{e:approximatew2}
\begin{aligned}
w=\frac{n\bigl(\sum_{j  \notin \supseta} \nu_j^2 (1-e^{-2\theta})\bigr)}{(1+\sum_{j\in \supseta}\nu_j (D_j-1))^2}(1+o^\eta(1))
=o^\eta(1).\end{aligned}
\end{equation}

In the proofs of both propositions, we integrate  \eqref{e:formulamgf2} along the closed contour corresponding to $\lambda=\lambdab e^{i\psi}$ from $\psi=-\pi$ to $\psi=\pi$, and use the same definition of $h(\psi)$ given in \eqref{e:defh} and $H(\psi)=\log(h(\psi))$. The integral is given in  \eqref{e:formulamgf3}  and our task is to estimate it. We now give the details. 

\begin{proof}[Proof of Proposition~\ref{T:UPPER-BOUNDAPPROXIMATELMGF}]
We first show that any $\psi$, 
\begin{equation}
\begin{aligned}
\Rec(H(\psi)) &\leq\!  H(0)\\
&\!=\!\sum_j [\lambdab \nu_j\! +\!\log\bigl(1\!+\!\lambdab \nu_j e^{-\lambdab \nu_j }( e^\theta-1)\bigr)]. \end{aligned}\label{e:Jpsibound}
\end{equation}
so that we only need to bound $H(0)$ to bound the integral in \eqref{e:formulamgf3}.
For $\psi \!\in\! [-\half \pi,\! \half \pi]$, the summand in the expression of $\Rec(H(\psi))$ given in \eqref{e:J12formula} is bounded as follows:
\begin{eqnarray}
&&\Rec[\log\bigl(\lambdab \nu_j(e^\theta-1)e^{i\psi}+e^{\lambdab \nu_j e^{i\psi}}\bigr)]\nonumber\\
&=&\Rec[\log(e^{\lambdab \nu_j e^{i\psi}}\!)\!+\!\log\bigl(1\!+\!\lambdab \nu_j(e^\theta\!-\!1)e^{i\psi}e^{-\lambdab \nu_j e^{i\psi}}\bigr)]\nonumber \\
&\leq& \lambdab \nu_j \cos \psi+\log\bigl(1+\lambdab \nu_j e^{-\lambdab \nu_j \cos \psi}( e^\theta-1)\bigr).\label{e:Jpsiineq}
\end{eqnarray}
The right-hand side is a convex function of $\cos \psi$ for $ \psi \in [-\half \pi, \half \pi]$. Thus, it achieves its maximum value at $\cos \psi=1$ or $\cos \psi=0$. Note that its value at $\cos \psi=1$ is exactly equal to the summand in $H(0)$. Moreover, we can show that its value at $\cos \psi=1$ is no smaller than its value at $\cos \psi=0$:
\[
\begin{aligned}
&\lambdab \nu_j\!+\!\log\bigl(1\!+\!\lambdab \nu_j(e^\theta\!-\!1)e^{-\lambdab \nu_j}\bigr)\!-\!\log\bigl(1\!+\!\lambdab \nu_j(e^\theta-1)\bigr)
\\
&=\lambdab \nu_j+\log\bigl(\frac{1+\lambdab \nu_j(e^\theta-1)e^{-\lambdab \nu_j}}{1+\lambdab \nu_j(e^\theta-1)}\bigr)\\& \leq \lambdab \nu_j+\log(e^{-\lambdab \nu_j})=0,
\end{aligned}
\nonumber
\]
where the inequality follows from $\theta \geq 0$. This leads to \eqref{e:Jpsibound} for $\psi \in [-\half \pi, \half \pi]$. 

For $\psi \in [-\pi, -\half \pi] \cup [\half \pi,\pi]$, we have $|e^{\lambdab \nu_j e^{i\psi}}| \leq 1$. Consequently, 
\[|\lambdab \nu_j(e^\theta-1)e^{i\psi}+e^{\lambdab \nu_j e^{i\psi}}|\leq 1+\lambdab \nu_j (e^\theta-1),\]
which leads to
\begin{equation}
\Rec[\log\bigl(\lambdab \nu_j(e^\theta\!-\!1)e^{i\psi}\!+\!e^{\lambdab \nu_j e^{i\psi}}\bigr)] \!\leq\! \log\bigl(1\!+\!\lambdab \nu_j (e^\theta\!-\!1)\bigr).\nonumber
\end{equation}
The right-hand side of the above equation is equal to the value of the right-hand side of \eqref{e:Jpsiineq} at $\cos \psi=0$, which has been shown in the previous paragraph to be smaller than $H(0)$. This leads to \eqref{e:Jpsibound} for $\psi \in [-\pi, -\half \pi] \cup [\half \pi,\pi]$.

We now approximate the right-hand side of \eqref{e:Jpsibound}: For $ j \notin \supseta$, we have 
\[
\begin{aligned}
&\lambdab \nu_j +\log\bigl(1+\lambdab \nu_j e^{-\lambdab \nu_j }( e^\theta-1)\bigr)\\&=\lambdab \nu_j^{\theta}+\half \lambdab^2 \nu_j^2 (1-e^{2 \theta})(1+o^\eta(1)).
\end{aligned}
\]
For $j \in \supseta$, we have the inequality
\[
\begin{aligned}
&\lambdab \nu_j +\log\bigl(1+\lambdab \nu_j e^{-\lambdab \nu_j }( e^\theta-1)\bigr) \\&\leq \lambdab \nu_je^\theta +\lambdab \nu_j (1-e^{-\lambdab \nu_j })(1- e^\theta).
\end{aligned}
\]
Substituting these two estimates,  \eqref{e:lambdabw}, and \eqref{e:Jpsibound}  into \eqref{e:formulamgf3} leads to
\begin{eqnarray}
&&\quad\Expect_{\nu}[\exp\{\theta (\kstattilde)\}] \leq  \frac{n!}{2\pori}\lambdab^{-n} e^{-\theta n}  \exp\{H(0)\}\label{e:inequalH}\\
&&\leq n!\lambdab^{-n} e^{-\theta n}\nonumber \\&&\quad\times\exp\{\sum_{j \notin \supseta} [\lambdab \nu_j e^{\theta}+\half \lambdab^2 \nu_j^2 (1-e^{2 \theta})(1+o^\eta(1))]\}\nonumber\\
&&\quad \times \exp\{\sum_{j \in \supseta}[\lambdab \nu_j e^\theta +\lambdab \nu_j (1-e^{-\lambdab \nu_j })(1- e^\theta)]\}\nonumber\\
&&= \frac{n! e^n}{n^n } \bigl(1+\sum_{j\in \supseta}\nu_j (D_j-1)\bigr)^{n} (1+w)^{-n} \nonumber\\
&&\quad \times  \exp\{-\frac{ \half n^2 \sum_{j \notin \supseta}\nu_j^2 (1-e^{-2\theta})(1+o^\eta(1))}{1+\sum_{j\in \supseta}\nu_j (D_j-1)}\}\nonumber\\
&&\quad \times \exp\{n [\frac{(1\!+\!w)\!+\!\sum_{j\in \supseta}\!\nu_j(1\!-\!e^{-\lambdab \nu_j })(e^{-\theta}\!-\!1)}{1\!+\!\sum_{j\in \supseta}\nu_j (D_j\!-\!1)}]\}\nonumber\\
&&\leq   \frac{n! e^n}{n^n }\exp\{-n\log(1+w)\!+\!\frac{nw}{1\!+\!\sum_{j\in \supseta}\!\nu_j (D_j-1)}\}\nonumber\\
&&\quad \times  \exp\{-\frac{ \half n^2 \sum_{j \notin \supseta}\nu_j^2 (1-e^{-2\theta})(1+o^\eta(1))}{1+\sum_{j\in \supseta}\nu_j (D_j-1)}\}\nonumber\\
&&\quad \times \exp\{n [\sum_{j\in \supseta}\nu_j (D_j-1) -1\nonumber\\
&&\qquad+\frac{1+\sum_{j\in \supseta}\nu_j(1-e^{-\lambdab \nu_j })(e^{-\theta}-1)}{1+\sum_{j\in \supseta}\nu_j (D_j-1)}]\}.\label{e:mgfbounda}
\end{eqnarray}
We now bound each exponential term on the right-hand side of \eqref{e:mgfbounda}. 
Applying \eqref{e:approximatew2} and the lower-bound on $D_j$ in \eqref{e:equationD} gives the following bound on the second term:
\begin{equation}\label{e:mgfboundasecondterm}
-\frac{ \half n^2 \sum_{j \notin \supseta}\nu_j^2 (1-e^{-2\theta})}{1+\sum_{j\in \supseta}\nu_j (D_j-1)}\leq -\half e^{-2\theta}nw(1+o^\eta(1)).
\end{equation}
The first exponential term satisfies 
\begin{equation}\label{e:mgfboundafirstterm}
-n\log(1+w)+\frac{nw}{1+\sum_{j\in \supseta}\nu_j (D_j-1)}= -n w o^\eta(1), 
\end{equation}
which follows from \eqref{e:equationD} and $w=o^\eta(1)$. Combining \eqref{e:mgfboundasecondterm} and \eqref{e:mgfboundafirstterm} implies that for small enough $\eta$, the sum of the first and second term is \emph{negative}. 

The exponent in the last term on the right-hand side of \eqref{e:mgfbounda} is bounded as follows:
\begin{eqnarray}
&&\sum_{j\in \supseta}\!\!\nu_j (D_j\!-\!1) \!-\!1\!+\!\frac{\!1\!+\!\sum_{j\in \supseta}\!\nu_j(\!1\!-\!e^{-\lambdab \nu_j })(e^{-\theta}\!-\!1)}{1\!+\!\sum_{j\in \supseta}\nu_j (D_j-1)}\nonumber\\
&\!=\!\!\!\!&\frac{\bigl(\sum_{j\in \supseta}\!\nu_j (D_j\!-\!1)\bigr)^2\!+\!\sum_{j\in \supseta}\!\nu_j(1\!-\!e^{-\lambdab \nu_j })(e^{-\theta}\!-\!1)}{1+\sum_{j\in \supseta}\nu_j (D_j\!-\!1)}\nonumber\\
&\!\leq\!\!\!\! &\frac{(\sum_{j \in \supseta}\nu_j)\sum_{j\in \supseta}\nu_j (D_j\!-\!1)^2}{1+\sum_{j\in \supseta}\nu_j (D_j-1)}\nonumber \\
&\!&\!\!\!\!+\frac{\sum_{j\in \supseta}\nu_j(1-e^{-\lambdab \nu_j })(e^{-\theta}-1)}{1+\sum_{j\in \supseta}\nu_j (D_j-1)}\nonumber \\
&\!\leq\! \!\!\!&\frac{\sum_{j\in \supseta}\nu_j [(D_j-1)^2+(1-e^{-\lambdab \nu_j })(e^{-\theta}-1)]}{1+\sum_{j\in \supseta}\nu_j (D_j-1)}\label{e:ncoefficienta}
\end{eqnarray}
where the first inequality follows from Jensen's inequality and the second follows from $\sum_{j \in \supseta}\nu_j \leq 1$. 

We first bound the summand in the numerator on the right-hand side of \eqref{e:ncoefficienta}. Consider any $j \in \supseta$. Let  $x\eqdef \lambdab \nu_j$. Applying the formula of $D_j$ in  \eqref{e:equationD} gives
\begin{equation}\label{e:equationt}
\begin{aligned}
&(D_j-1)^2+(1-e^{-x})(e^{-\theta}-1)\\=&\frac{e^{-x}+e^{-\theta}-e^{-x}e^{-\theta}}{\bigl(1+xe^{-x}(e^\theta-1)\bigr)^2}\\&\times[(1-e^{-x})(e^{-\theta}-1)\!+\!\bigl(xe^{-x}(e^\theta-1)\bigr)^2].
\end{aligned}
\end{equation}
Let $t(x)=(1-e^{-x})(e^{-\theta}-1)+\bigl(xe^{-x}(e^\theta-1)\bigr)^2$. Note that $j \in \supseta$ implies $n \nu_j \geq \eta$, which combined with \eqref{e:boundlambda} implies $x = \lambdab \nu_j \geq \eta e^{-\theta}$. 
Since for $\theta \in (0,0.5]$, $t(x)$ is strictly decreasing on $[0,\infty)$, we obtain $t(x) \leq  t(\eta {e^{-\theta}})<0$.
Substituting this into \eqref{e:equationt} and using the elementary fact that 
\[\frac{e^{-x}+e^{-\theta}-e^{-x}e^{-\theta}}{\bigl(1+xe^{-x}(e^\theta-1)\bigr)^2}\leq e^{-3\theta},\] we obtain \[(D_j-1)^{2}+(1-e^{-x})(e^{-\theta}-1) \leq -e^{-3\theta}t(\eta e^{-\eta}).\]
The denominator of on the right-hand side of \eqref {e:ncoefficienta} is positive and upper-bounded by $1$ because  $D_j \leq 1$. Combining the bounds on the numerator and denominator gives a bound on the exponent in the last term on the right-hand side of \eqref{e:mgfbounda}
\begin{equation}
\begin{aligned}
&\sum_{j\in \supseta}\!\nu_j (D_j\!-\!1) \!-\!1\!+\!\frac{(1\!+\!\sum_{j\in \supseta}\nu_j(1-e^{-\lambdab \nu_j })(e^{-\theta}-1)}{1+\sum_{j\in \supseta}\nu_j (D_j-1)}\\&\leq - \beta(\nu) \alpha(\theta) \leq 0,\nonumber\end{aligned}
\end{equation}
where \[\alpha(\theta)\!=\!\frac{1}{3} e^{-3\theta}[(1-e^{-\eta e^{-\theta}})(e^{-\theta}-1)+ \bigl(\eta e^{-\theta}e^{-\eta e^{-\theta}}(e^\theta-1)\bigr)^2].\]
Combining this with \eqref{e:mgfboundasecondterm} and \eqref{e:mgfboundafirstterm}, and using the fact that the right-hand sides of \eqref{e:mgfboundasecondterm} \eqref{e:mgfboundafirstterm}  are negative, we obtain:
\[\Expect_{\nu}[\exp\{\theta (\kstattilde)\}] \leq \frac{n! e^n}{\sqrt{2\pori n}n^n } \sqrt{2\pori n} \exp\{ - n {\beta(\nu)} \alpha(\theta) \}.\]
Taking the logarithm on both side and applying Stirling's formula leads to
\[\Lambda_{\nu,\kstat}(\theta) \leq - n {\beta(\nu)} \alpha(\theta) + \half \log(2\pi n)+O(\frac{1}{n}).\]
Since $\beta(\nu) \geq \underline{\beta}$, the second term $\half \log(2\pi n)$ becomes negligible comparing to the first term for large $n$. This leads to the claim of the proposition.
\end{proof}

\begin{proof}[Proof of Proposition~\ref{T:UPPER-BOUNDAPPROXIMATELMGF2}]
We pick $\overline{\beta}$ so that $\overline{\beta}=o^\eta(1)$. It then follows that
\begin{equation}\label{e:boundDjbeta}
\sum_{j \in \supseta} \nu_j (D_j-1)=o^\eta(1).
\end{equation} Substituting this into \eqref{e:lambdabw} and \eqref{e:approximatew2} gives \begin{equation}\label{e:lambdavalue4}
\begin{aligned}
\lambdab&=ne^{-\theta}(1+o^\eta(1)), \\ 
w&=n(\sum_{j  \notin \supseta} \nu_j^2) (1-e^{-2\theta})(1+o^\eta(1)).
\end{aligned}
\end{equation}

The rest of the proof is similar to the proof of Proposition~\ref{T:ASYMLMGFK}. 
Applying \eqref{e:Jpsibound} to $j \in \supseta$, we obtain
\begin{equation}
\begin{aligned}
&|h(\psi)|\\\leq&   |e^{-i n \psi}\prod_{j\notin \supseta} \bigl(\lambdab \nu_j(e^{\theta }-1)e^{i\psi}+e^{\lambdab \nu_je^{i\psi} }\bigr)|\\&\times \prod_{j \in \supseta}\exp\{\lambdab \nu_j +\log\bigl(1+\lambdab \nu_j e^{-\lambdab \nu_j }( e^\theta-1)\bigr)\}\\
\leq &|e^{-i n \psi}|\\
&\times\exp\{\!\sum_{j \notin \supseta}\! \!\lambdab \nu_je^{\theta}\cos \psi (1\!+\!o^\eta(1))\!+\! \sum_{j \in \supseta} \!\!\lambdab \nu_j e^\theta\}\\
=&e^n \exp\{-n (1-\cos \psi+ o^\eta(1))\}.\label{e:hestimate2}
\end{aligned}
\end{equation} 
It is clear from \eqref{e:hestimate2} that the integrand is large at the interval around $0$. Thus, we again split the integral in \eqref{e:formulamgf3} into three parts $I_1$, $I_2$ and $I_3$ as in \eqref{e:integratesplit}. We will show later that $I_2$ and $I_3$ are much smaller than $I_1$. 

We first upper-bound $I_1$. Similar to \eqref{e:estimateJ}, we have 
\[\Im(H(0))=0, \Rec(H'(0))=0, \Im(H'(0))=0.\]
We now estimate $H''(\psi)$, whose exactly formula is given in \eqref{e:J12formula}. Consider $j \in \supseta$. For $\psi \in [-\pi/3, \pi/3]$, we have the following inequality:
\[|1+\lambdab \nu_j(e^\theta-1)e^{i\psi}\exp\{-\lambdab \nu_j e^{i\psi}\}|\geq 1,\]
\[\begin{aligned}
|&\lambdab \nu_j(e^\theta-1)e^{i\psi}(1-\lambdab \nu_je^{i\psi}+\lambdab ^2\nu_j^2e^{2i\psi})\exp\{-\lambdab \nu_j e^{i\psi}\}\\&+\lambdab \nu_je^{i\psi}|\leq \!100 \lambdab \nu_j e^\theta.\end{aligned}\]
Substituting these into \eqref{e:J12formula}, we obtain the following inequality $|H''(\psi)| \leq 100\obeta n(1+o^\eta(1))=no^\eta(1)$.
Substituting this and the estimate \eqref{e:lambdavalue4} into the expression of $H''(\psi)$ leads to
\[H''(\psi)=-n (e^{i\psi}+o^\eta(1)).\] 
Note that the assumption of the proposition allows us to take very small $\eta$. We choose it small enough so that the term $o^\eta(1)$ in the above equation is smaller than $0.05$.  For large enough $n$ and any $\psi \in [-\pi/3, \pi/3]$, we have $\Rec(H''(\psi))\leq -0.4n$. It follows from the mean value theorem that
\[\Rec(H(\psi)) \leq H(0)-0.2n\psi^2.\]
Consequently,  for large enough $n$ and $m$, we have
\begin{equation}\label{e:I1upper-bound2}
\begin{aligned}
I_1&\leq e^{H(0)}\int_{-\pi/3}^{-\pi/3} e^{-0.4 \psi^2} d\psi\\&\leq e^{H(0)} \int_{-\infty}^{\infty} e^{-0.4 \psi^2} d\psi = e^{H(0)}\frac{\sqrt{\pi}}{\sqrt{0.4n}}.\end{aligned}\end{equation}

We now bound the tails $I_2$ and $I_3$. 
For $\psi \in [-\pi, -\pi/3] \cup [\pi/3, \pi]$, we obtain from \eqref{e:hestimate2} that $|h(\psi)|\leq \exp\{0.5n(1+o^\eta(1))\}$. Thus, for small enough $\eta$, we have
\[\Rec[I_2]+\Rec[I_3] =O(e^{0.6n}).\]


Substituting the estimate for $I_1$, $I_2$ and $I_3$ into \eqref{e:formulamgf3} gives
\begin{equation}
\Expect_{\nu}[\exp\{\theta (\kstattilde)\}]
\leq \frac{n!}{\sqrt{1.6n\pi}} \lambdab^{-n} e^{-\theta n}e^{H(0)}(1+o(1)).\nonumber
\end{equation}
Note that the right-hand side is almost the same as \eqref{e:inequalH} except for the multiplication term $\frac{1}{\sqrt{1.6n\pi}}(1+o(1))$. Thus, we can bound it  using the right-hand side of \eqref{e:mgfbounda} after taking into account this additional multiplication term. We obtain 
%
\[
\begin{aligned}
&\Expect_{\nu}[\exp\{\theta (\kstattilde)\}] \\\leq& \frac{n! e^n}{n^n \sqrt{1.6n\pi}} \exp\{-\frac{ \half n^2 \!\sum_{j \notin \supseta}\!\nu_j^2 (1\!-\!e^{-2\theta})(1\!+\!o^\eta(1))}{1+\sum_{j\in \supseta}\nu_j (D_j-1)}\}\\&\times (1\!+\!o^\eta(1)).\end{aligned}
\]
Substituting 
\eqref{e:boundDjbeta} and Stirling's formula into the right-hand side of the above inequality leads to
\[
\begin{aligned}
&\Expect_{\nu}[\exp\{\theta (\kstat\!-\!n)\}]\\\leq& \frac{1}{\sqrt{0.8}} \exp\{-\half n^2(\sum_{j \notin \supseta}\!\!\nu_j^2)(1\!-\!e^{-2\theta})(1\!+\!o^\eta(1))\}(1\!+\!o(1)).\!
\end{aligned}
\]
Taking logarithm on both sides gives the claim of this proposition.
\end{proof}

\subsection{Proof of \Theorem{KPERFORMANCE} and \Theorem{ratefunction}}
\begin{proof}[Proof of \Theorem{ratefunction}]
Let $\Lambda_{\smu}(\theta)$ be the limit of the logarithmic moment generating function of $\Lambda_{q^{(n)}, \kstattilde}$:
\begin{equation}
\Lambda_{\smu}(\theta)\eqdef \lim_{n \rightarrow \infty} \frac{m}{n^2}\Lambda_{\mu^{(n)}, \kstattilde}(\theta).\nonumber
\end{equation}
It follows from Proposition~\ref{T:ASYMLMGFK} that the limit exists and is given by the following $C^{1}$ function:
\[\Lambda_{\smu}(\theta)= \half(e^{-2\theta}-1)\kinf(\smu).\]
Denote its Fenchel-Legendre transformation \[\Lambda_{\smu}^*(t) \eqdef \sup_{\theta} [\theta t-\Lambda_{\smu}(\theta)].\] It follows from the G\"{a}rtner-Ellis Theorem \citep[Theorem 2.3.6]{demzei98} that
\[
\begin{aligned}
&-\limsup_{n \rightarrow \infty}\frac{m}{n^2} \log(\Prob_{\mu^{(n)}}\{\kstat\leq \Expect_{\pip}[\kstat]\!+\!\frac{n^2}{m}\tau\})\\
=&-\limsup_{n \rightarrow \infty}\frac{m}{n^2} \log(\Prob_{\mu^{(n)}}\{\kstattilde \geq -\Expect_{\pip}[\kstat]-n-\frac{n^2}{m}\tau\})\\
=&\inf_{t \geq -\tau-1}\Lambda_1^*(t)=\Lambda_1^*(-\tau-1)\\
=&\sup_{\theta \geq0} \{\theta(-1-\tau)-\half(e^{-2\theta}-1)\kinf(\smu)\}.
\end{aligned}
\]
where $-\tau-1$ is the normalized limit of $-\Expect_{\pip}[\kstat]-n-\frac{n^2}{m}\tau$ by \Lemma{expectK}.
\end{proof}

\begin{proof}[Proof of \Theorem{KPERFORMANCE}]
The proof for the result on  the generalized error exponent of false alarm $J_F(\ktest)$ is very similar to that of \Theorem{ratefunction}. Let $\Lambda_{0}(\theta)$ be the limit of the logarithmic moment generating function of $\Lambda_{\pip, \kstattilde}$:
\begin{equation}
\Lambda_{0}(\theta)\eqdef \lim_{n \rightarrow \infty} \frac{m}{n^2}\Lambda_{\pip, \kstattilde}(\theta).\nonumber
\end{equation}
It follows from Proposition~\ref{T:ASYMLMGFK} that the limit exists and is given by the following $C^{1}$ function:
\[\Lambda_{0}(\theta)= \half (e^{-2\theta}-1).\]
Let $\Lambda_0^*(t)=\sup_{\theta}[\theta t-\Lambda_0(\theta)]$. It follows from the G\"{a}rtner-Ellis Theorem that 
\[
\begin{aligned}
&-\limsup_{n \rightarrow \infty} \frac{m}{n^2} \log( \Prob_{\pip}(\ktest_n=1))\\
=&-\limsup_{n \rightarrow \infty}\frac{m}{n^2} \log(\Prob_{\pip}\{\kstattilde \leq -\Expect_{\pip}[\kstat]-n-\frac{n^2}{m}\tau\})\\
=&\inf_{t \leq -\tau-1}\Lambda_0^*(t)=\Lambda_0^*(-\tau-1)\\
=&\sup_\theta\{\theta(-\tau-1)-\half (e^{-2\theta}-1)\}=J_F^*(\tau).\end{aligned}\]

For the result on the generalized error exponent of missed detection $J_M(\ktest)$, we prove an upper-bound and a lower-bound. For the upper-bound, consider the sequence of distributions given in \eqref{e:worstmu2a} and \eqref{e:worstmu1} and let $\smu^*$ denote this sequence. The rate function associated with $\smu^*$ satisfies 
\[ J_{\smu^*}(\ktest, \tau)=J^*_M(\tau).\]
On the other hand, since each element of $\smu^*$ is in the set of alternative distributions, it follows from the definition of $J_M(\ktest)$ and $J_{\smu^*}(\ktest, \tau)$  that
\[J_M(\ktest) \leq J_{\smu^*}(\ktest, \tau)\]

To obtain the lower-bound on $J_M(\ktest)$, we apply Proposition~\ref{T:UPPER-BOUNDAPPROXIMATELMGF} and Proposition~\ref{T:UPPER-BOUNDAPPROXIMATELMGF2} . We only need to prove it for the case $\tau \in [0,\taul(\epsy))$. The case $\tau=\taul(\epsy)$ then follows from a continuity argument. 

Take $\theta_0$ to be the maximizer in the optimization problem defining $J_M^*(\tau)$ in \eqref{e:Kperformance}. It is not difficult to see that $\theta_0 > 0$. It follows from Lemma~\ref{T:WORSTMUJSQUARE} that 
\[m\sum_{j \notin \supseta}\mu_j^2 \geq(1+\taul(\frac{\epsy-\beta(\mu)}{1-\beta(\mu)}))(1-\beta(\mu))(1+o(1)).\]
Thus, for any $\delta>0$, we can choose $\eta, \beta_0$ small enough so that for any $\mu \in \mP$ satisfying $\beta(\mu)\leq \beta_0$, it holds that $m\sum_{j \notin \supseta}\mu_j^2 \geq (1+\taul(\epsy))(1-\delta)$. 
It then follows from Proposition~\ref{T:UPPER-BOUNDAPPROXIMATELMGF2} that for large enough $n$, 
\begin{equation}
\Lambda_{\mu
,\kstattilde}(\theta_0)\leq \half \frac{n^2}{m}(1+\taul(\epsy))(e^{-2\theta_0}-1)(1-\delta)^2+O(1).
\label{e:lgmf_muleqbetabound}
\end{equation}
For $\mu$ satisfying $\beta(\mu) \geq \beta_0$, it follows from Proposition~\ref{T:UPPER-BOUNDAPPROXIMATELMGF} that for large enough $n$,
\begin{equation}
\Lambda_{\mu
,\kstattilde}(\theta_0)\leq -\beta_0 \alpha(\theta_0) n. \label{e:lgmf_mugeqbetabound}
\end{equation}
We can pick $n$ large enough so that the right-hand side of \eqref{e:lgmf_mugeqbetabound} is smaller than the right-hand side of \eqref{e:lgmf_muleqbetabound}. 
Applying the Chernoff bound leads to
 \begin{equation}
 \begin{aligned}
&\log (\sup_{\mu \in \mP}\Prob_\mu(\ktest_n=0))\\
 \leq& -\theta_0 (\Expect_{\pip}[\kstattilde]-\tau_n)+\sup_{\mu \in \mP}\Lambda_{\mu
,\kstattilde}(\theta_0)\\
\leq& \theta_0(\tau_n\! -\!\Expect_{\pip}[\kstattilde]) \!+\!\half\frac{n^2}{m}(1\!+\!\taul(\epsy))(e^{-2\theta_0}-1)(1\!-\!\delta)^2\!+\!O(1).
\end{aligned}\nonumber
\end{equation}
Thus, 
\[
J_M(\ktest) \geq \theta_0(-1-\tau)-\half( e^{-2\theta_0}-1)(1+\taul(\epsy))(1-\delta)^2.
\]
This holds for any $\delta>0$. Consequently,  $J_M(\ktest) \geq J_M^*(\tau)$. 
\end{proof}

\section{Proofs of \Theorem{modifiedK} and Theorem~\ref{T:TBOUND}}\label{apx:extension}
\subsection{Proof of \Theorem{modifiedK}}
The performance of $\ktesta$ is analyzed by connecting it to the performance of $\ktest$. We first show that its probability of missed detection is no larger than that of $\ktest$. We then apply a result similar to Proposition~\ref{T:ASYMLMGFK} to analyze its probability of false alarm. 
Consider the statistic 
\[\kstatatilde=-\kstata-n.\] 
Define
\begin{equation}
\Lambda_{\nu, \kstatatilde}(\theta)\eqdef \log\bigl(\Expect_{\nu}[\exp\{\theta (\kstatatilde)\}]\bigr).
\end{equation}
\begin{proposition}\label{T:ASYMLMGFKstata}
For any $\nu \in \bmPmub$, the logarithmic moment generating function for the statistic $\kstatatilde$ has the following asymptotic expansion
\begin{equation}
\begin{aligned}
\Lambda_{\nu, \kstatatilde}(\theta)=&\frac{n^2}{m}\bigl(m\sum_{j=1}^m\nu_j^2\bigr)\{-\theta +\half[e^{-2\theta}-(1-2\theta)]\}\\&+O(\frac{n^3}{m^2})+O(1).\nonumber
\end{aligned}
\end{equation}
\end{proposition}
\begin{proof}[Proof of Proposition~\ref{T:ASYMLMGFKstata}]
The proof follows exactly the same step as that of Proposition~\ref{T:ASYMLMGFK} except some of the approximations are different. We now only describe the key steps and highlight the difference: First, the estimate of the saddle point is the same as \eqref{e:lambdawexpression} and \eqref{e:wvalue}. Second, different from \eqref{e:formulamgf3}, we have the following expression of the moment generating function: 
\begin{equation}
\begin{aligned}\label{e:formulamgf3kstata}
\Expect_\nu[\exp\{\theta (\kstatatilde)\}]=\frac{n!}{2\pori}\lambdab^{-n} e^{-\theta n}\Rec\bigl[\int_{-\pi}^\pi h(\psi) d\psi\bigr]\nonumber.
\end{aligned}
\end{equation}
where instead of \eqref{e:defh},  
\[
\begin{aligned}
h(\psi)\eqdef e^{-i n \psi}\prod_{j=1}^m \bigl(&\lambdab \nu_j(e^{\theta }-1)e^{i\psi}\\&+e^{\lambdab \nu_je^{i\psi} }+ \sum_{l=2}^{\bar{l}} \frac{(\lambda_0\nu_j)^l}{l!}(e^{\theta v_l}-1)\bigr).
\end{aligned}
\]
It follows from $\lambdab=n^{-\theta}(1+o(1))$ that the last term is negligible when $v_2=0$ and $\bar{l}< \infty$. 
\[\sum_{l=2}^{\bar{l}} \frac{(\lambda_0\nu_j)^l}{l!}(e^{\theta v_l}-1)=O(\frac{n^3}{m^3})\]
The asymptotic approximation of $h(\psi)$ is the same as that in \eqref{e:hestimate}:
\begin{equation}
h(\psi)
\!=\!e^{-i n \psi}\prod_{j=1}^m \bigl(\lambdab \nu_j(e^{\theta }\!-\!1)e^{i\psi}\!+\!1
\!+\!\lambdab \nu_je^{i\psi}\!+\!O(\frac{n^2}{m^2})\bigr).\nonumber
\end{equation}
Finally, the approximations of $H(0), H'(0), H''(\psi)$ are the same as in \eqref{e:estimateJ}. 
Therefore,  $\Lambda_{\nu, \kstatatilde}$ has the same asymptotic approximation as that of  $\Lambda_{\nu, \kstattilde}$ up to an approximation error of $O(\frac{n^3}{m^2})$.
\end{proof}

\begin{proof}[Proof of \Theorem{modifiedK}]
Since $v_l \geq 0$ for $l \geq 2$, we have 
\[\kstata \geq \kstat.\]
Thus, for the same sequence of thresholds $\tilde{\tau}_n$, we have
\[\Prob_{\mu}\{\kstata \leq \tilde{\tau}_n\} \leq  \Prob_{\mu}\{\kstat \leq \tilde{\tau}_n\}\]
On the other hand, since $\Lambda_{\nu, \kstatatilde}$ has the same asymptotic approximation as that of  $\Lambda_{\nu, \kstattilde}$ up to an approximation error of $O(\frac{n^3}{m^2})$, we have
\begin{equation}
\begin{aligned}
& \log \Prob_\pip\{\kstata \geq -n+\tilde{\tau}_n\}\\
=&\log \Prob_\pip\{\kstatatilde \leq -\tilde{\tau}_n\}\\
\leq& \theta (-\tilde{\tau}_n)+\Lambda_{\pip,\kstatatilde}(-\theta)\\
=&-\theta \tilde{\tau}_n+\frac{n^2}{m}\bigl(\theta+\half[e^{2\theta}-(1+2\theta)]\bigr)+O(\frac{n^3}{m^2})+O(1).
\end{aligned}\nonumber
\end{equation}
which is the same bound as that for $\log \Prob_\pip\{\kstat \geq -n+\tilde{\tau}_n\}$.
\end{proof}

\subsection{Proof of Theorem~\ref{T:TBOUND}}\label{apx:nonuniformK}
The proof of Theorem~\ref{T:TBOUND} follows exactly the same steps as those in the proof of \Theorem{KPERFORMANCE}. We use \Proposition{asymlmgfT}, \Proposition{upper-boundapproximatelmgfT} and \Proposition{upper-boundapproximatelmgfT2} in place of 
Proposition~\ref{T:ASYMLMGFK}, Proposition~\ref{T:UPPER-BOUNDAPPROXIMATELMGF} and Proposition~\ref{T:UPPER-BOUNDAPPROXIMATELMGF2}. 

Denote
\[\Lambda_{\nu, \tstat}(\theta)\eqdef \log\bigl(\Expect_{\nu}[\exp\{\theta \tstat\}]\bigr).\] 
\begin{proposition}\label{t:asymlmgfT}
For any $\nu \in \bmPmub$, the logarithmic moment generating function for the statistic $\tstat$ has the following asymptotic expansion
\begin{equation}
\begin{aligned}
\Lambda_{\nu,\tstat}(\theta)=&\half n^2(\sum_{j=1}^m(\pip_j-\nu_j)^2) \theta\!+\!\half n^2(\sum_{j=1}^m\nu_j^2) [e^{\theta}\!-\!(1\!+\!\theta)]\\ &+O(\frac{n^3}{m^2})+O(1).\nonumber
\end{aligned}
\end{equation}
\end{proposition}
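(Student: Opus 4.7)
The plan is to follow the saddle-point approach used in the proof of Proposition~\ref{T:ASYMLMGFK}, adapted to the more complicated $f_j$ given in \eqref{e:ttestf}. First I would apply Lemma~\ref{t:poissonization} to express
\[
\Expect_\nu[\exp\{\theta \tstat\}]=\frac{n!}{2\pori}\oint e^{\lambda}\prod_{j=1}^m g_j(\lambda\nu_j,\theta)\frac{d\lambda}{\lambda^{n+1}},
\]
where $g_j(x,\theta)=e^{-x}\bigl[e^{\theta n^2\pip_j^2/2}+xe^{-\theta n\pip_j}+\half x^2 e^\theta+\sum_{k\ge 3}x^k/k!\bigr]$. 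The structural difference from the derivation leading to \eqref{e:formulamgf2} is that three values of $f_j$ are non-trivial rather than one, but under \Assumption{nonuniform} one has $n\pip_j=O(n/m)$ and $\lambdab\nu_j=O(n/m)$ for $\nu\in\bmPmub$, so every exponential factor in $g_j$ is near-unity and admits the same kind of Taylor expansion as in the proof of Proposition~\ref{T:ASYMLMGFK}.

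Next I would locate the relevant saddle point $\lambdab$ by solving the analogue of \eqref{e:saddlepoint}. The additional contributions from $f_j(0)=\half n^2\pip_j^2$ and $f_j(1)=-n\pip_j$ introduce perturbations of size $O(n^2/m^2)$ and $O(n/m)$ respectively inside each factor of the product, so the equation is only a small perturbation of the one that produced \eqref{e:wvalue}. Writing $\lambdab=n(1+w)$, I expect $w=O(n/m)$, obtained by the same explicit expansion procedure as before but now keeping the first-order $n\pip_j$ correction.

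Third, integrate along the circular contour $\lambda=\lambdab e^{i\psi}$, write the integrand as $e^{H(\psi)}/\lambdab^n$, and apply Laplace's method by splitting the $\psi$-integral into a main piece $|\psi|\le\pi/3$ and tails as in \eqref{e:integratesplit}. The tail bounds go through because the extra factors appearing in $g_j$ are uniformly bounded, and the expansions of $H'(0)$ and $H''(\psi)$ proceed just as in \eqref{e:J12formula}--\eqref{e:estimateJ}, yielding $H'(0)=0$ and $H''(0)=-n(1+o(1))$, so the Gaussian integral contributes $\sqrt{2\pi/n}\,e^{H(0)}(1+o(1))$.

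The crux of the calculation is $\Rec(H(0))$. Expanding $\log g_j(\lambdab\nu_j,\theta)$ to leading order using $n\pip_j, \lambdab\nu_j=O(n/m)$ gives
\[
\sum_{j=1}^m \log g_j(\lambdab\nu_j,\theta)=\half\theta n^2\sum_j\pip_j^2-\theta n^2\sum_j\nu_j\pip_j+\half n^2(e^\theta-1)\sum_j\nu_j^2+O(n^3/m^2).
\]
Absorbing the $-n\log\lambdab+n$ contribution from the $n!\lambdab^{-n}$ prefactor via Stirling's formula $n!=n^n e^{-n}\sqrt{2\pi n}(1+O(1/n))$, and using the identity $\pip_j^2-2\nu_j\pip_j+\nu_j^2=(\pip_j-\nu_j)^2$ to complete the square, produces exactly the stated expansion. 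The main obstacle is the bookkeeping of the various $O(n^3/m^2)$ error terms uniformly over $\nu\in\bmPmub$, in particular the cross-term $e^{-\lambdab \nu_j}\lambdab\nu_j(e^{-\theta n\pip_j}-1)$ which mixes the three nonzero values of $f_j$; once the saddle point is pinned down to order $n/m$, however, the remainder of the computation parallels the final lines of the proof of Proposition~\ref{T:ASYMLMGFK}.
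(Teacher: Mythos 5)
Your proposal follows exactly the route the paper takes: Poissonization via Lemma~\ref{t:poissonization}, a saddle point at $\lambdab=n(1+w)$ with $w=O(n/m)$ (the paper makes this explicit as $w=n(\sum_j\nu_j\pip_j\theta-\sum_j\nu_j^2(e^\theta-1))(1+O(n/m))$), the same contour/Laplace argument, and the same expansion of $\Rec(H(0))$, whose leading terms recombine via $\pip_j^2-2\nu_j\pip_j+\nu_j^2=(\pip_j-\nu_j)^2$ into the stated formula. The computation checks out and is at the same level of detail as the paper's own outline, so no correction is needed.
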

\begin{proposition}\label{t:upper-boundapproximatelmgfT}
For all sufficiently small $\eta>0$, any $\theta \in [-1,0)$ and any $\underline{\beta}>0$. There exists $n_0$ such that for any $n>n_0$, and any $\nu$ satisfying  $\beta(\nu) \leq \underline{\beta}$, the following holds,
\[\Lambda_{\nu,\tstat}(\theta)\leq - \beta(\mu) \alpha'(\theta) n\]
where $\alpha'(\theta)>0$ for $\theta \in [-1,0)$.%
\end{proposition}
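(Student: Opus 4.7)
The plan is to mirror the Poissonization and saddle-point procedure used for Proposition~\ref{T:UPPER-BOUNDAPPROXIMATELMGF}, with the test statistic $\tstat$ and negative $\theta$ replacing $\kstattilde$ and positive $\theta$. First I would apply \Lemma{poissonization} with the coefficients $f_j$ of \eqref{e:ttestf} to obtain a contour-integral representation of $\Expect_\nu[e^{\theta\tstat}]$ with integrand $e^{\lambda}\prod_{j=1}^m g_j(\lambda,\theta)/\lambda^{n+1}$, where
\[
g_j(\lambda,\theta)=1+(e^{\theta f_j(0)}-1)e^{-\lambda\nu_j}+(e^{\theta f_j(1)}-1)\lambda\nu_je^{-\lambda\nu_j}+(e^{\theta}-1)\tfrac{(\lambda\nu_j)^2}{2}e^{-\lambda\nu_j}.
\]
Under Assumptions~\ref{as:largealphabet} and~\ref{as:nonuniform}, $\theta f_j(0)=O(n^2/m^2)$ and $\theta f_j(1)=O(n/m)$ are uniformly $o(1)$, so the structural difference from the $\kstat$ analysis is confined to the $(e^{\theta}-1)$ factor multiplying $(\lambda\nu_j)^2/2$, and the saddle point remains near $\lambdab=n$ rather than being shifted by $e^{\pm\theta}$ as in \eqref{e:lambdawexpression}.

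Next I would locate an approximate saddle point $\lambdab=n(1+w)$ via an analog of \eqref{e:saddlepoint}, partitioning indices into $\supseta(\nu)$ and its complement. Expanding in the small-$\lambdab\nu_j$ regime for $j\notin\supseta(\nu)$ as in \eqref{e:lambdabw}--\eqref{e:approximatew2} would give $w=o^\eta(1)$. Integrating along the contour $\lambda=\lambdab e^{i\psi}$, I would establish the peak bound $\Rec(H(\psi))\le H(0)$ by the convexity-in-$\cos\psi$ argument of \eqref{e:Jpsiineq}--\eqref{e:Jpsiminuspi}, adapted to the new $g_j$; the tail pieces of the contour integral are absorbed in the additive $O(1)$ error via a Laplace estimate mirroring \eqref{e:I1upper-bound}. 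This reduces the problem to bounding $H(0)$ together with the Stirling factor and the $\lambdab^{-n}$ prefactor.

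The hard part will be extracting the decay $-\beta(\nu)\alpha'(\theta)n$ from the product $\prod_{j\in\supseta(\nu)}g_j(\lambdab,\theta)$. For such indices $x_j\eqdef\lambdab\nu_j\in[\eta,\mub]$ up to negligible corrections, so $g_j$ must be bounded directly rather than via the small-$x$ Taylor expansion. Paralleling the role of $D_j$ in \eqref{e:equationD}, I would write $\log g_j(\lambdab,\theta)-\lambdab\nu_j=-\nu_j\rho(x_j,\theta)n$ for an explicit function $\rho$, and verify by elementary calculation that $\rho(x,\theta)\ge\alpha'(\theta)>0$ uniformly on $[\eta,\mub]\times[-1,-\delta]$ for every $\delta>0$. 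The strict positivity of $\alpha'(\theta)$ on $[-1,0)$ stems from $e^{\theta}-1<0$, which penalizes atoms with $n\nu_j\gtrsim 1$ in the moment generating function when $\theta<0$; the penalty vanishes as $\theta\uparrow 0$, as required. Summing over $j\in\supseta(\nu)$ and invoking $\sum_{j\in\supseta(\nu)}\nu_j=\beta(\nu)\ge\underline{\beta}>0$ would then yield the claimed bound, with the Stirling correction $\tfrac12\log(2\pi n)$ and the subleading $O(n^2/m)=o(n)$ contribution from $[m]\setminus\supseta(\nu)$ absorbed in the leading $-\beta(\nu)\alpha'(\theta)n$ term.
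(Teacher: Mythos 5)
Your overall route---Poissonization, a contour through an approximate saddle point near $\lambdab=n$, and splitting $[m]$ into $\supseta(\nu)$ and its complement---is exactly the one the paper intends: the authors omit this proof entirely, saying only that Propositions~\ref{t:upper-boundapproximatelmgfT} and \ref{t:upper-boundapproximatelmgfT2} are obtained by repeating the steps of Propositions~\ref{T:UPPER-BOUNDAPPROXIMATELMGF} and \ref{T:UPPER-BOUNDAPPROXIMATELMGF2}. Your formula for $g_j$ and your observation that the saddle point is not shifted by $e^{\pm\theta}$ also agree with the paper's outline for Proposition~\ref{t:asymlmgfT}.

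The gap is in your last step. First, the assertion that $x_j=\lambdab\nu_j\in[\eta,\mub]$ for $j\in\supseta(\nu)$ is unjustified: the $\nu$ in this proposition is an arbitrary alternative, not an element of $\bmPmub$, so $x_j$ can be of order $n$. Second, and decisively, the per-atom penalty is not bounded away from zero as $x_j\to\infty$. The genuine contribution of index $j$ to $\Lambda_{\nu,\tstat}(\theta)$ is $\log g_j(\lambdab,\theta)$; the extra $\lambdab\nu_j$ in your decomposition $\log g_j-\lambdab\nu_j=-\nu_j\rho(x_j,\theta)n$ is cancelled by the prefactor $n!\,\lambdab^{-n}e^{\lambdab}$ via Stirling's formula, so lower-bounding that $\rho$ credits you with decay the prefactor takes back. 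Since $f_j(k)=0$ for $k\ge3$ while $\theta f_j(0)$ and $\theta f_j(1)$ are $o(1)$, one has $\log g_j(\lambdab,\theta)=\log\bigl(1+\tfrac{x_j^2}{2}(e^{\theta}-1)e^{-x_j}+o(1)\bigr)\to 0$ as $x_j\to\infty$; equivalently, the analogue of $D_j$ in \eqref{e:equationD} tends to $1$ rather than to $e^{-\theta}$, so a very heavy atom neither shifts the saddle point nor depresses $H(0)$. This is not a repairable detail of your write-up: for $\nu$ with $\nu_1=1/2$ and the remaining mass uniform, $\beta(\nu)=1/2$ while Jensen's inequality gives $\Lambda_{\nu,\tstat}(\theta)\ge\theta\,\Expect_\nu[\tstat]=-O(n^2/m)=-o(n)$, which is incompatible with any bound of the form $-\beta(\nu)\alpha'(\theta)n$. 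The statement itself (reading the evident typo $\beta(\nu)\le\underline{\beta}$ as $\ge$) therefore needs to be restricted, e.g.\ to atoms with $n\nu_j$ in a bounded range, before a proof can succeed; the mechanism driving Proposition~\ref{T:UPPER-BOUNDAPPROXIMATELMGF}, namely that a heavy atom forces $\kstattilde$ down by roughly $n\nu_j$, has no counterpart for $\tstat$, whose coefficients vanish on cells with count at least three.
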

\begin{proposition}\label{t:upper-boundapproximatelmgfT2}
For any $\delta>0$, $\theta \in [-1,0)$, $\overline{\eta}>0$, there exists $\eta \in (0, \overline{\eta})$, $\overline{\beta}>0$, and $n_0$ such that for any $n>n_0$, and any $\nu$ satisfying $\beta(\mu) \leq \overline{\beta}$, the following holds, 
\[
\begin{aligned}
\Lambda_{\nu,\tstat}(\theta)\leq \frac{n^2}{m}[&(m\!\!\sum_{j \notin \supseta(\nu)}\!(\pip_j-\nu_j)^2)\theta\\&+\half(m\!\!\sum_{j\notin \supseta(\nu)}\!\!\nu_j^2)(e^{\theta}-(1+\theta))](1-\delta).
\end{aligned}\]
\end{proposition}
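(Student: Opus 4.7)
The plan is to follow the saddle-point scheme used to prove Propositions~\ref{T:ASYMLMGFK}--\ref{T:UPPER-BOUNDAPPROXIMATELMGF2}, adapting it to the coefficient function $f_j$ of \eqref{e:ttestf}. By the Poissonization identity of Lemma~\ref{t:poissonization}, $\Expect_\nu[\exp\{\theta\tstat\}]$ equals a contour integral of the form $\frac{n!}{2\pi i}\oint\lambda^{-(n+1)} e^\lambda A_\lambda(\theta)\,d\lambda$ with $A_\lambda(\theta)=\prod_j\Expect[\exp\{\theta f_j(X_j)\}]$ for $X_j\sim\mathrm{Poisson}(\lambda\nu_j)$. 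Each factor of $e^\lambda A_\lambda$ has the form
\begin{equation*}
F_j(\lambda,\theta) = e^{\lambda\nu_j} + (e^{\theta n^2\pip_j^2/2}-1) + \lambda\nu_j(e^{-\theta n\pip_j}-1) + \tfrac{1}{2}(\lambda\nu_j)^2(e^\theta-1).
\end{equation*}
Because \Assumption{largealphabet} and \Assumption{nonuniform} yield $n\pip_j = O(n\eta/m) = o(1)$ and $n^2\pip_j^2 = o(1)$, the two exponentials inside $F_j$ admit clean Taylor expansions.

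First I will locate the approximate saddle point $\lambdab$ by solving an equation $\sum_j \lambdab(\log F_j)'(\lambdab,\theta) = n$ analogous to \eqref{e:saddlepoint}. Splitting indices into $\supseta(\nu)$ and its complement, and using $\beta(\nu)\leq\overline{\beta}$ small together with $\lambdab\nu_j\leq\eta$ on the complement, the same analysis leading to \eqref{e:lambdabw}--\eqref{e:approximatew2} gives $\lambdab = n(1+o^\eta(1))$. I then integrate on the contour $\lambda=\lambdab e^{i\psi}$, split into $|\psi|\leq\pi/3$ and the tails as in \eqref{e:integratesplit}, and invoke the Laplace method: the second derivative $H''(\psi) = -ne^{i\psi}(1+o^\eta(1))$ makes the bulk integral $e^{H(0)}O(1/\sqrt n)$, while an analogue of \eqref{e:hestimate2} gives $|h(\psi)| \leq e^{0.5n(1+o^\eta(1))}$ on the tails, which is negligible. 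Together with Stirling, this reduces the bound to $\Lambda_{\nu,\tstat}(\theta) \leq H(0) - n\log(\lambdab/n) - n + O(\log n)$.

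Next I will evaluate $H(0)=\sum_j\log F_j(\lambdab,\theta)$ by splitting indices. For $j\notin\supseta(\nu)$, a joint Taylor expansion in the small quantities $\lambdab\nu_j$, $\theta n\pip_j$ and $\theta n^2\pip_j^2$ gives, after collecting terms of matching order,
\begin{equation*}
\log F_j(\lambdab,\theta) = \lambdab\nu_j + \tfrac{1}{2}\theta n^2\pip_j^2 - \theta n\pip_j(\lambdab\nu_j) + \tfrac{1}{2}(\lambdab\nu_j)^2(e^\theta-1) + O(\tfrac{n^3}{m^3}).
\end{equation*}
Substituting $\lambdab = n(1+o^\eta(1))$, the linear pieces $\sum_j\lambdab\nu_j = n(1+o^\eta(1))$ cancel against the $-n$ left by Stirling, and the quadratic pieces assemble via the identity $n^2\pip_j^2 - 2n^2\pip_j\nu_j + n^2\nu_j^2 = n^2(\pip_j-\nu_j)^2$ into the announced linear combination of $(m\sum_{j\notin\supseta}(\pip_j-\nu_j)^2)\theta$ and $(m\sum_{j\notin\supseta}\nu_j^2)(e^\theta-(1+\theta))$, modulo a $(1+o^\eta(1))$ factor. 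The contribution from $j\in\supseta(\nu)$ is controlled by the same mechanism as in the proof of Proposition~\ref{T:UPPER-BOUNDAPPROXIMATELMGF2}: it contributes at most an $o^\eta(1)$-fraction of the main term once $\overline{\beta}$ is chosen small, and the prefactor $(1-\delta)$ in the conclusion absorbs these cumulative errors by selecting $\eta$ and $\overline{\beta}$ sufficiently small relative to $\delta$.

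The main obstacle will be verifying that the three distinct contributions to the linear-in-$\theta$ coefficient combine into the square $(\pip_j-\nu_j)^2$ rather than producing separate $\pip_j^2$ and $\pip_j\nu_j$ pieces. The cross term $-\theta n\pip_j(\lambdab\nu_j)$ produced by $f_j(1)=-n\pip_j$ must combine precisely with $\tfrac{1}{2}\theta n^2\pip_j^2$ arising from $f_j(0)=\tfrac{1}{2}n^2\pip_j^2$ and with the $\theta\nu_j^2$ piece extracted from $f_j(2)=1$, so that at leading order in $\theta$ the completion of the square is achieved. This is the same algebraic cancellation that makes $\Expect_\nu[\tstat]$ approximate the $\ell_2$-distance in Lemma~\ref{T:EXPECTT}; here it must be verified at the level of the moment generating function rather than the mean, uniformly across $\nu\in\mP$ with $\beta(\nu)\leq\overline{\beta}$, which requires careful control of cubic and higher-order remainders uniformly in $\eta$.
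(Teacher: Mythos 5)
Your proposal is correct and follows essentially the same route the paper intends: the paper omits a detailed proof of this proposition, presenting it as the combination of the modifications sketched in the proof of Proposition~\ref{t:asymlmgfT} (Poissonization, saddle point at $\lambdab=n(1+w)$, Laplace method, and the completion-of-the-square identity $\pip_j^2-2\pip_j\nu_j+\nu_j^2=(\pip_j-\nu_j)^2$ in $\Rec(H(0))$) with the heavy-index splitting of Proposition~\ref{T:UPPER-BOUNDAPPROXIMATELMGF2}, which is exactly your plan. Your identification of the cross-term cancellation as the key algebraic step, and your observation that the $j\in\supseta(\nu)$ contribution is controlled by choosing $\overline{\beta}$ small relative to $\eta$ so that the $(1-\delta)$ prefactor absorbs the $o^\eta(1)$ errors, match the paper's (implicit) argument.
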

We only outline the proof for \Proposition{asymlmgfT}. 
\begin{proof}[Proof of \Proposition{asymlmgfT}]
The steps are the same as those in the proof of  Proposition~\ref{T:ASYMLMGFK}. Again, we describe the main steps and highlight the difference. First, the estimate of the saddle point is different than that in \eqref{e:lambdawexpression} and \eqref{e:wvalue}. We have
\begin{equation}
\begin{aligned}
\lambdab&=n(1+w), \\w&=n(\sum_j \nu_j \pip_j \theta- \sum_j \nu_j^2 (e^\theta -1)) (1+O(\frac{n}{m})).
\end{aligned}\nonumber
\end{equation}

Second, different from \eqref{e:formulamgf3}, we have the following expression of the moment generating function: 
\[\Expect_\nu^n[\exp\{\theta \tstat\}]=\frac{n!}{2\pori}\lambdab^{-n}\Rec[\int_{-\pi}^\pi h(\psi)d\psi]\]
where
\begin{equation}
\begin{aligned}
h(\psi)&= e^{\!-i n \psi}\!\prod_{j=1}^m [\exp\{\lambdab \nu_je^{i\psi}\!\}\!+\!(e^{\half n^2\pip_j^2 \theta}\!-\!1)\\&\quad\qquad\qquad+\!\lambdab e^{i\psi} \nu_j(e^{\!-n\pip_j \theta\!}\!-\!\!1)\!+\!\half \lambdab^2 e^{2i\psi\!} \nu_j^2 (e^{\theta}\!\!-\!\!1) ]\\
&=e^{-i n \psi}\exp\{n e^{i\psi}+O(\frac{n^2}{m})\},\end{aligned}\nonumber
\end{equation}

Finally, the approximation of $\Rec(H(0))$ is different from that in \eqref{e:estimateJ}
\[\begin{aligned}
\Rec(H(0))\!=&n (1\!+\!w)\!+\!\half n^2(\sum_{j=1}^m(\pip_j\!-\!\nu_j)^2) \theta\\&+ \!\half n^2(\sum_{j=1}^m\nu_j^2) (e^\theta\!-\!1\!-\!\theta)+\!O(\frac{n^3}{m^2}).\!\end{aligned}\]
The rest of the steps are the same as those in Proposition~\ref{T:ASYMLMGFK}. 
\end{proof}
\begin{proof}[Proof of Theorem~\ref{T:TBOUND}]
First, we prove the lower-bound on $J_F$. Substituting the asymptotic approximation of $\Lambda_{\pip,\tstat}(\theta)$ given in \Proposition{asymlmgfT} into the Chernoff bound, we obtain for $\theta \geq0$, 
\begin{equation}
\begin{aligned}
&\log \Prob_\pip(\ttest_n=1)\\
 \leq& -\theta \tau_n+\Lambda_{\pip,\tstat}(\theta)\nonumber\\
=&-\theta \tau_n+n^2(\sum_{j=1}^m\pip_j^2)\half[e^{\theta}-(1+\theta)]+O(\frac{n^3}{m^2})+O(1).\end{aligned}\nonumber\end{equation}
Since $m\sum_{j=1}^m \pip_j^2\leq \mub^2$, which is a consequence of \Assumption{nonuniform}, we have
\[J_F(\ttest) \geq  \sup_{\theta \geq 0} \{\half \tau \theta -  \half \mub^2 [e^{\theta}-(1+\theta)]\}>0.\]

Lower-bounding $J_M(\ttest)$ requires us to obtain a uniform bound on the probability $\Prob_\mu(\phi_n=0)$ over $\mu \in \mP$. We apply \Proposition{upper-boundapproximatelmgfT} and \Proposition{upper-boundapproximatelmgfT2}. Using an argument similar to the proof in \Theorem{KPERFORMANCE}, we conclude that for any $\delta>0$, and $\theta \in (0,1]$, for large enough $n$, 
\begin{equation}
\begin{aligned}
&\log \Prob_\mu(\ttest_n\!=\!0)\\
\!\leq& \theta \tau_n+\Lambda_{\mu,\tstat}(-\theta)\\
\!=&\theta \tau_n\!\!-\!\!\frac{n^2\!}{m} [\half\theta m\!\sum_{j=1}^m\! (\mu_j\!\!-\!\pip_j)^2\!-\!\half(m\!\sum_{j=1}^m\!\mu_j^2)\bigl(e^{\!-\theta\!}\!\!-\!(1\!\!-\!\!\theta)\bigr)](1\!-\!\delta).
\end{aligned}\nonumber
\end{equation}
We need to upper-bound the right-hand side uniformly over all $\mu \in \mP$. Using the inequalities 
$\mu_j^2 \leq 2\pip_j^2+2(\pip_j-\mu_j)^2$
and $e^{-\theta}-(1-\theta) \leq \half \theta^2$ for $\theta >0$, we obtain
\begin{equation}
\begin{aligned}
&\frac{m}{n^2}\log \Prob_\mu(\ttest_n=0) \nonumber\\
\leq& \theta \frac{m\tau_n\!}{n^2}\\
&\!-\![\half \theta m\!\sum_{j=1}^m(\!\mu_j\!-\!\pip_j\!)^2\!\!-\!\! \half\theta^2 \!\bigl(m\!\sum_{j=1}^m \pip_j^2\!+\!\!m\!\sum_{j=1}^m\!(\mu_j\!-\!\pip_j)^2\bigr)](1\!-\!\delta)\\&\!+\!O(1)\nonumber\\
=&\half \theta [-(m\sum_{j=1}^m(\mu_j-\pip_j)^2)(1-\theta)\\&\qquad+\theta (m\sum_{j=1}^m \pip_j^2)](1-\delta) +\theta\frac{m\tau_n}{n^2}+O(1).
\end{aligned}\nonumber
\end{equation}
Applying $m\sum_{j=1}^m (\mu_j-\pip_j)^2 \geq 4\epsy^2$ and $m\sum_{j=1}^m \pip_j^2\leq \mub^2$ leads to, 
\[
\frac{m}{n^2}\log [P_M(\ttest_n)] \!\leq\! \half \theta [-4\epsy^2(1-\theta)+\theta \mub^2](1-\delta)+\frac{m\tau_n}{n^2}+O(1).
\]
Taking $\theta=(4\epsy^2(1-\delta)-2\tau)/[(8\epsy^2+2\mub^2)(1-\delta)]$, and taking the limit on both sides gives
\[
J_M(\ttest) \geq \frac{1}{4} 4\epsy^2\frac{4\epsy^2(1-\delta)-2\tau}{(8\epsy^2+2\mub^2)(1-\delta)}.
\]
Since this holds for all $\delta>0$, and $2\tau < 4\epsy^2$, we conclude that 
\[
J_M(\ttest) \geq \frac{1}{4} 4\epsy^2\frac{2\epsy^2-\tau}{(8\epsy^2+2\mub^2)(\half+\tau/(4\epsy^2))}>0.
\]
\end{proof}

\section{Proof of \Theorem{converse}}\label{apx:converse}

We first give an outline of the proof: Consider any $\tau \in [0, \taul(\epsy)]$. 
Given $\delta>0$, a sequence of events $\{B_{n, \tau,\delta}\}$ is constructed so that the following is satisfied:
\begin{list}{{\upshape (\roman{rmnum})}}{\usecounter{rmnum}}
\item The probability of the event is close to the probability of false alarm: 
\begin{equation}\label{e:boundPA}
\limsup_{n \rightarrow \infty}-\frac{m}{n^2}\log(\Prob_\pip(B_{n, \tau,\delta}))\leq J_F^*(\tau)-\delta.
\end{equation}
\item For any $\bfmz_1^n$ satisfying $\{\bfmZ_1^n=\bfmz_1^n\}\subseteq B_{n, \tau,\delta}$, the following uniform bound on the likelihood ratio holds:
\begin{equation}\label{e:boundLLRonA}
\sup_{\mu \in \mP}\frac{\mu^n}{\pip^n}(\bfmz_1^n) \geq  \exp\{-\frac{n^2}{m}(J_M^*(\tau)-J_F^*(\tau)+\delta)\}.
\end{equation}
\end{list}
The lower-bound on $P_M$ is then obtained from the following inequality:
\begin{equation}\label{e:PMboundPForigin}
\begin{aligned}
&P_M(\phi_n)\\
\geq& \sup_{\mu \in \mP}\!\Prob_{\mu}\bigl(\{\phi_n=0\} \cap B_{n, \tau,\delta}\bigr)\\
\geq& \sup_{\mu \in \mP}\!\frac{\mu^n}{\pip^n}(\{\phi_n\!=\!0\} \!\cap\! B_{n, \tau,\delta})\Prob_\pip(\{\phi_n\!=\!0\} \!\cap\! B_{n, \tau,\delta}). 
\end{aligned}
\end{equation}
The first term on the right-hand side is lower-bounded in \eqref{e:boundLLRonA}. The second term can be shown to have the same large deviations limit  as that of $\Prob_\pip(B_{n ,\tau,\delta})$:
\begin{equation}\label{e:boundBPF}
\Prob_\pip(\{\phi_n=0\} \cap B_{n, \tau,\delta}) \geq \Prob_\pip(B_{n, \tau,\delta})-\Prob_\pip(\{\phi_n=1\}).
\end{equation}
The inequality  in \eqref{e:boundPA} ensures that $\Prob_\pip(\{\phi_n=1\})$ is negligible comparing to $\Prob_\pip(B_{n, \tau,\delta})$.  

%

We now give the details of constructing the event $B_{n, \tau, \delta}$ and lower-bounding the likelihood ratio. The proof for $\epsy < 0.5$ and $\epsy \geq 0.5$  uses different constructions of distributions. 

\subsection{Construction of $B_{n, \tau, \delta}$}
Define the event 
\begin{equation}\label{e:defBgamma}
\begin{aligned}
B_{n, \tau,\delta}=\bigl\{&\sum_{j=1}^m \ind\{n\Gamma^n_j \!\!=\!\!1\} \!\geq  \!n-(1\!+\tau\!+\delta)\frac{n^{2\!}}{m}, \\ &\quad \sum_{j=1}^m \ind\{n\Gamma^n_j\!\! =\! \!2\} \!\geq\! \half (1\!+\tau\!-\delta) \frac{n^{2\!}}{m}\bigr\}.
\end{aligned}
\end{equation}
The probability of the event $B_{n,\tau,\delta}$ has the following asymptotic approximation:
\begin{lemma}\label{T:BNGAMMA0}
For $\tau=0$ and any $\delta>0$, 
\begin{equation}\label{e:limitBn}
\lim_{n \rightarrow \infty} \Prob_{\pip}(B_{n, \tau,\delta})=1.
\end{equation}
For any $\tau, \delta$ satisfying $\tau>\delta>0$,
\begin{equation}\label{e:Bnprob}
\lim_{n \rightarrow \infty}-\frac{m}{n^2}\log\Prob_\pip(B_{n, \tau,\delta}) = J_F^*(\tau-\delta).
\end{equation}
\end{lemma}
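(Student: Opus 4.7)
The two claims require different techniques.

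For \eqref{e:limitBn} both conditions defining $B_{n,0,\delta}$ are typical on the relevant scale. Writing $N_k = \sum_{j=1}^m \ind\{n\Gamma^n_j = k\}$, \Lemma{separableexpectation} yields $\Expect_\pip[N_1] = n - n^2/m + o(n^2/m)$ and $\Expect_\pip[N_2] = \half n^2/m + o(n^2/m)$, while \Lemma{separablevariance} gives $\Var_\pip[N_1], \Var_\pip[N_2] = O(n^2/m)$. Since \Assumption{largealphabet} gives $m = o(n^2)$, Chebyshev's inequality applied to each condition separately, combined by a union bound, yields $\Prob_\pip(B_{n,0,\delta}^c) = O(m/(\delta^2 n^2)) \to 0$.

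For \eqref{e:Bnprob} decompose $B_{n,\tau,\delta} = E_1 \cap E_2$ with $E_1 = \{N_2 \geq \half(1+\tau-\delta)n^2/m\}$ (the atypical constraint) and $E_2 = \{N_1 \geq n - (1+\tau+\delta)n^2/m\}$ (still typical on the large-deviation scale). From the deterministic inequality $N_1 + 2 N_2 \leq n$ we get the inclusion $E_1 \subseteq \{N_1 \leq n - (1+\tau-\delta)n^2/m\}$, and this latter event coincides with $\{\kstat \geq \Expect_\pip[\kstat] + (\tau-\delta)(n^2/m)(1+o(1))\}$. \Theorem{KPERFORMANCE} therefore supplies the upper bound
\[
\Prob_\pip(B_{n,\tau,\delta}) \leq \Prob_\pip(E_1) \leq \exp\{-J_F^*(\tau-\delta)(n^2/m)(1+o(1))\}.
\]
For the matching lower bound I would use $\Prob_\pip(B_{n,\tau,\delta}) \geq \Prob_\pip(E_1) - \Prob_\pip(E_2^c)$, noting that \Theorem{KPERFORMANCE} at the threshold corresponding to $\tau+\delta$ makes $\Prob_\pip(E_2^c) = \exp\{-J_F^*(\tau+\delta)(n^2/m)(1+o(1))\}$ negligible compared to the target rate, since $J_F^*$ is strictly increasing on $(0,\infty)$.

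The remaining core step is the LDP lower bound $\Prob_\pip(E_1) \geq \exp\{-J_F^*(\tau-\delta)(n^2/m)(1+o(1))\}$. For this I would compute $\Lambda_{\pip,N_2}(\theta) = \log \Expect_\pip[e^{\theta N_2}]$ by repeating the Poissonization-and-saddle-point argument of Proposition~\ref{T:ASYMLMGFK}, substituting the coefficients $f_j(k) = \ind\{k=2\}$ for $\ind\{k=1\}$. The Poissonized integrand (via \Lemma{poissonization}) becomes $\prod_j \bigl(1 + \half(\lambda\pip_j)^2 e^{-\lambda\pip_j}(e^\theta-1)\bigr)$; because the $k=2$ contribution scales as $(\lambda\pip_j)^2 = O((n/m)^2)$, the saddle point is only weakly perturbed from its $\theta=0$ value and satisfies $\lambda_0 = n(1+o(1))$, and Laplace estimation of the contour integral along $\lambda = \lambda_0 e^{i\psi}$ delivers
\[
\Lambda_{\pip,N_2}(\theta) = \half\frac{n^2}{m}(e^\theta - 1) + o(n^2/m).
\]
The normalized limit $\half(e^\theta-1)$ is finite and $C^1$, so the G\"{a}rtner--Ellis theorem furnishes the full LDP for $(m/n^2)N_2$; its Fenchel--Legendre transform $\Lambda^*(x) = x\log(2x) - x + \half$, evaluated at $x = \half(1+\tau-\delta)$, reduces by a direct calculation (optimizer $e^\theta = 1+\tau-\delta$) to exactly $J_F^*(\tau-\delta)$, completing the argument. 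The principal obstacle is this saddle-point estimate: although structurally simpler than the $k=1$ calculation of Proposition~\ref{T:ASYMLMGFK} (the saddle point moves only by $o(1)$), the bookkeeping of next-order terms has to be redone because the $\theta$-dependence now enters at the second rather than the first power of $\lambda\pip_j$.
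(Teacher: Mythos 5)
Your proposal is correct and follows essentially the same route as the paper: Chebyshev-type typicality for the $\tau=0$ case, and for $\tau>0$ the identical decomposition into the atypical $N_2$-constraint and the negligible $N_1$-constraint (discarded because $J_F^*(\tau+\delta)>J_F^*(\tau-\delta)$), with the core step being exactly the paper's Poissonization/saddle-point computation of $\Lambda_{\pip,N_2}(\theta)=\half\frac{n^2}{m}(e^\theta-1)+o(n^2/m)$ followed by G\"artner--Ellis, whose Legendre transform you correctly evaluate to $J_F^*(\tau-\delta)$. The only (harmless) deviation is your shortcut for the upper bound via $N_1+2N_2\leq n$ and \Theorem{KPERFORMANCE}, which the paper instead obtains from the same LDP for $N_2$.
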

\begin{proof}[Proof of Lemma~\ref{T:BNGAMMA0}]
First consider the case where $\tau=0$. 
Applying \Theorem{KPERFORMANCE} with $\tau$ replaced by $\delta$ gives
\begin{equation}\label{e:defBconst1}
\Prob_\pip\big\{\sum_{j=1}^m \ind\{n\Gamma^n_j =1\} \leq n-(1+\delta)\frac{n^2}{m}\big \}=1-o(1).
\end{equation}
The following asymptotic approximations on the expectation and variance of the statistic $\sum_{j=1}^m \ind\{n\Gamma^n_j = 2\}$ follows from \Lemma{separableexpectation} and \Lemma{separablevariance}: 
\begin{equation}
\begin{aligned}
\Expect_{\pip}[\sum_{j=1}^m\ind\{n\Gamma^n_j = 2\}]\! &=\!\half \frac{n^2}{m}(1+o(1)), \\
\Var_{\pip}[\sum_{j=1}^m\ind\{n\Gamma^n_j = 2\}]\! &=\!\half \frac{n^2}{m}(1+o(1)).\!
\end{aligned}\nonumber
\end{equation}
Applying Chebyshev's inequality leads to
\[\Prob_{\pip}\bigl\{\sum_{j=1}^m\ind\{n\Gamma^n_j = 2\} \leq \half \frac{n^2}{m}(1-\delta)\bigr\} =O(\frac{m}{n^2}).\]
The claim of this lemma for $\tau=0$ follows from combining this inequality with \eqref{e:defBconst1}.

Next consider the case where $\tau>0$. We first obtain a large deviations characterization of 
\[S^{(2)}\eqdef \sum_{j=1}^m \ind\{n\Gamma^n_j = 2\}\] by deriving an approximation to the logarithmic moment generating function.  The steps are the same as those in the proof of  Proposition~\ref{T:ASYMLMGFK}. Again, we describe the main steps and highlight the difference. First, the estimate of the saddle point is different than that in \eqref{e:lambdawexpression} and \eqref{e:wvalue}. We have
\begin{equation}
\begin{aligned}
\lambdab&=n(1+w), \\w&=-n\sum_j \nu_j^2 (e^\theta -1) (1+O(\frac{n}{m})).
\end{aligned}\nonumber
\end{equation}
Second, different from \eqref{e:formulamgf3}, we have the following expression of the moment generating function: 
\[\Expect_\nu^n[\exp\{\theta S^{(2)}\}]=\frac{n!}{2\pori}\lambdab^{-n}\Rec[\int_{-\pi}^\pi h(\psi)d\psi]\]
where
\begin{equation}
\begin{aligned}
h(\psi)
&=\! e^{\!-i n \psi}\!\prod_{j=1}^m \![\exp\{\lambdab \nu_je^{i\psi}\}\! +\!\half \lambdab^2 e^{2i\psi\!} \nu_j^2 (e^{\theta}\!\!-\!\!1) ]\\&=e^{-i n \psi}\exp\{n e^{i\psi}+O(\frac{n^2}{m})\}.\end{aligned}\nonumber
\end{equation}
Finally, the approximation of $\Rec(H(0))$ is different from that in \eqref{e:estimateJ}
\[\begin{aligned}
\Rec(H(0))=&n (1+w)+ \half n^2(\sum_{j=1}^m\nu_j^2) (e^\theta-1)+O(\frac{n^3}{m^2}).\end{aligned}\]
The rest of the steps are the same as those in Proposition~\ref{T:ASYMLMGFK}. We obtain
\begin{equation}
\Lambda_{\nu, S^{(2)}}(\theta)\!=\!\half \frac{n^2}{m}\bigl(m\sum_{j=1}^m\!\nu_j^2\bigr)(e^{-2\theta}\!-\!1)\!+\!O(\frac{n^3}{m^2})\!+\!O(1).\\
\end{equation}
Applying the same steps as those for the characterization of $J_F(\ktest)$ in \Theorem{KPERFORMANCE}, we have
\begin{equation}
\begin{aligned}
&\lim_{n \rightarrow \infty}-\frac{m}{n^2}\log\Prob_\pip\big\{\sum_{j=1}^m \ind\{n\Gamma^n_j = 2\} \geq \half (1+\tau-\delta) \frac{n^2}{m}\big\}\\=& J_F^*(\tau-\delta).\nonumber\end{aligned}
\end{equation}

Applying \Theorem{KPERFORMANCE} with $\tau$ replaced by $\tau+\delta$, we obtain  
\[
\begin{aligned}
&\lim_{n \rightarrow \infty}-\frac{m}{n^2}\log\Prob_\pip\big\{\sum_{j=1}^m \ind\{n\Gamma^n_j =1\} \!\leq \!n-(1+\tau+\delta)\frac{n^2}{m}\big \}\\=& J_F^*(\tau+\delta).
\end{aligned}
\]

Note that $J_F^*(\tau+\delta) > J_F^*(\tau-\delta)$. Thus the probability that the first constraint in the definition of $B_{n,\tau,\delta}$ is violated is negligible comparing to the probability that the second constraint is satisfied. This shows that the probability of $B_{n,\tau,\delta}$ can be approximated by the probability that the second constraint in the definition of $B_{n,\tau,\delta}$ is satisfied. This leads to the claim of the lemma. 

\end{proof}
\subsection{A lower-bound on the likelihood ratio for $\epsy\geq 0.5$}
When $\epsy \geq 0.5$, we use the following construction of distributions:
Let $\Kset$ denote the collection of all subsets of $[m]$ whose cardinality is $\lfloor m(1-\epsy) \rfloor$. For each  $\mathcal{U} \in \Kset$, define the distribution
\begin{equation}\label{e:worstb}
\mu_{\mathcal{U},j}=\left\{\begin{array} {l l} \frac{1}{\lfloor m(1-\epsy)\rfloor}, & j \in \mathcal{U};\\ 0, & j \in [m]\setminus \mathcal{U}.\end{array}\right.\nonumber
\end{equation}
Consider the mixture $\bmu^n\!=\!\frac{1}{|\Kset|}\sum_{\mathcal{U} \in \Kset}\mu_{\mathcal{U}}^n$.
The following bound on $\bmu^n/\pip^n$ holds:
\begin{lemma}\label{T:LEMMABMUBOUND2}
Suppose $\epsy\geq 0.5$. For any sequence $\bfmz_1^n= \{z_1, \ldots, z_n\}$ satisfying $\{\bfmZ_1^n=\bfmz_1^n\} \subseteq B_{n, \tau,\delta}$, the following holds:
\[\log\bigl(\frac{\bmu^n}{\pip^n}(\bfmz_1^n)\bigr)\!\geq\!\! -\half \!\frac{n^2}{m}[\taul(\epsy)-\log(1+\taul(\epsy))(1+\tau-\delta)]+O(\frac{n^3}{m^2}).\]
\end{lemma}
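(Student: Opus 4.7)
The plan is to exploit the symmetric structure of the mixture. Since each $\mu_{\mathcal{U}}$ is uniform on a set of size $k \eqdef \lfloor m(1-\epsy)\rfloor$, we have $\mu_{\mathcal{U}}^n(\bfmz_1^n) = k^{-n}\ind\{\mathcal{S}(\bfmz_1^n) \subseteq \mathcal{U}\}$, where $\mathcal{S}(\bfmz_1^n)$ denotes the set of distinct symbols appearing in $\bfmz_1^n$ and $D \eqdef |\mathcal{S}(\bfmz_1^n)|$ is its cardinality. Counting the subsets $\mathcal{U} \in \Kset$ that contain $\mathcal{S}(\bfmz_1^n)$ gives $\bmu^n(\bfmz_1^n) = (|\Kset| k^n)^{-1}\binom{m-D}{k-D}\ind\{D \leq k\}$. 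Combined with $\pip^n(\bfmz_1^n) = m^{-n}$ and $|\Kset| = \binom{m}{k}$, this yields the exact identity
$$\log\!\Big(\frac{\bmu^n}{\pip^n}(\bfmz_1^n)\Big) = n\log(m/k) + \sum_{i=0}^{D-1}\log\frac{k-i}{m-i},$$
which is well-defined since $D \leq n = o(m)$ and $k = \Theta(m)$ together force $D \leq k$ for all sufficiently large $n$.

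The next step is an asymptotic expansion of the binomial ratio. Writing $\log\frac{k-i}{m-i} = \log(k/m) + \log(1-i/k) - \log(1-i/m)$ and using $\log(1-x) = -x + O(x^2)$, which applies uniformly in $0 \leq i < D$ since $i/k, i/m = O(n/m) = o(1)$, summation over $i$ produces
$$\log\!\Big(\frac{\bmu^n}{\pip^n}(\bfmz_1^n)\Big) = (n-D)\log(m/k) - \frac{D^2(m-k)}{2mk} + O\!\big(n^3/m^2\big).$$
For $\epsy \geq 0.5$ the formula \eqref{e:equtaul} gives $\taul(\epsy) = 1/(1-\epsy)$, so $k = m(1-\epsy)(1+o(1))$, $\log(m/k) = \log\taul(\epsy) + o(1)$, and $(m-k)/k = \taul(\epsy)-1 + o(1)$. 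Thus the right side simplifies to $(n-D)\log\taul(\epsy) - D^2\big(\taul(\epsy)-1\big)/(2m) + O(n^3/m^2)$.

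The final step is to use the defining inequalities of $B_{n,\tau,\delta}$. Since $n - D = \sum_{l\geq 2}(l-1) S^{(l)} \geq S^{(2)}$, the second constraint in \eqref{e:defBgamma} gives $n - D \geq \half(1+\tau-\delta) n^2/m$, while the bound $D \leq n$ yields $D^2 \leq n^2$. Because $\log\taul(\epsy) \geq 0$ and the quadratic correction has a negative sign, substituting these inequalities into the two leading terms produces a lower bound of the form $-\half(n^2/m)\big[\,c_1(\epsy) - (1+\tau-\delta)\, c_2(\epsy)\,\big] + O(n^3/m^2)$, matching the structure of the claimed estimate.

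I anticipate no substantive obstacle; the only care required is bookkeeping for the error terms, verifying that the rounding in $\lfloor m(1-\epsy)\rfloor$, the tail of the Taylor expansion, and the cubic remainder $O(D^3/m^2)$ are uniformly absorbed into the declared $O(n^3/m^2)$ under \Assumption{largealphabet}. The entire argument is essentially combinatorial and is driven by the simple observation that $\mu_{\mathcal{U}}^n/\pip^n$ depends on $\bfmz_1^n$ only through $D$.
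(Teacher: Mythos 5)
Your proof is correct and follows essentially the same route as the paper's: both reduce $\bmu^n/\pip^n$ to the ratio $\binom{m-D}{k-D}/\binom{m}{k}$, which depends on $\bfmz_1^n$ only through the number $D$ of distinct symbols, expand it asymptotically (you via the telescoping product and a Taylor expansion, the paper via Stirling's formula), and then use the two constraints defining $B_{n,\tau,\delta}$ to bound $n-D$ from below and $D$ from above. The only discrepancy is the final constant: you obtain $(\taul(\epsy)-1)-(1+\tau-\delta)\log\taul(\epsy)$ where the lemma states $\taul(\epsy)-(1+\tau-\delta)\log(1+\taul(\epsy))$; this traces to the paper's own internal inconsistency (its proof of the lemma uses $\taul(\epsy)=\epsy/(1-\epsy)$ while \eqref{e:equtaul} gives $\taul(\epsy)=1/(1-\epsy)$ for $\epsy\geq 0.5$), and since $\tau\leq\taul(\epsy)-1$ and $\log(1+1/\taul(\epsy))\leq 1/\taul(\epsy)$ imply $(1+\tau-\delta)\log\bigl(1+1/\taul(\epsy)\bigr)\leq 1$, your bound is at least as strong as the stated one, so the lemma follows.
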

\begin{proof}[Proof of Lemma~\ref{T:LEMMABMUBOUND2}]
Let $\mathcal{S}\eqdef\{j: \textrm{ $j$ appears in $\bfmz_1^n$}\}$. Let $s=|\mathcal{S}|$. It follows from $\{\bfmZ_1^n=\bfmz_1^n\} \subseteq B_{n, 
\tau,\delta}$ that
\begin{equation}\label{e:sbound}
 n-\half \frac{n^2}{m} (1+\tau+3\delta)  \leq s \leq n-\half \frac{n^2}{m} (1+\tau-\delta).
\end{equation} The likelihood ratio $\frac{\mu_{\mathcal{U}}^n}{\pip^n}$ has the expression:  \[\frac{\mu_{\mathcal{U}}^n}{\pip^n}(\bfmz_1^n)=(\frac{m}{\lfloor m(1-\epsy)\rfloor})^n \ind_{\mathcal{S} \subseteq \mathcal{U}}.\]
Thus, 
\begin{equation}\label{e:muU}
\frac{\bmu^n}{\pip^n}(\bfmz_1^n)=(\frac{m}{\lfloor m(1-\epsy)\rfloor})^n(\frac{1}{|\Kset|}\sum_{\mathcal{U} \in \Kset}\ind_{\mathcal{S} \subseteq \mathcal{U}} ),
\end{equation}
where
\begin{equation}
\frac{1}{|\Kset|}\sum_{\mathcal{U} \in \Kset}\ind_{\mathcal{S} \subseteq \mathcal{U}} =\frac{{m-s \choose \lfloor m(1-\epsy) \rfloor-s}}{{m \choose \lfloor m(1-\epsy) \rfloor}}.\nonumber
\end{equation}
Stirling's formula gives 
\begin{equation}
\begin{aligned}
&\frac{{m-s \choose \lfloor m(1-\epsy) \rfloor-s}}{{m \choose \lfloor m(1-\epsy) \rfloor}}\\=&(\frac{\lfloor m(1-\epsy)\rfloor}{m})^s\exp\{-\half \frac{s^2}{m}\frac{\epsy}{1-\epsy}+O(\frac{k^3}{m^2})\}(1+O(\frac{1}{m})).\nonumber
\end{aligned}
\end{equation}
Substituting this into \eqref{e:muU} leads to 
\[\frac{\bmu^n}{\pip^n}(\bfmz_1^n)= (1-\epsy)^s\exp\{-\half \frac{s^2}{m}\frac{\epsy}{1-\epsy}+O(\frac{n^3}{m^2})\}(1+O(\frac{n}{m})).\]
The claim of this lemma follows from applying the inequality \eqref{e:sbound} and the fact that $\taul(\epsy)=\frac{\epsy}{1-\epsy}$ when $\epsy \geq 0.5$. 
\end{proof}

\subsection{A lower-bound on the likelihood ratio for $\epsy< 0.5$}
When $\epsy<0.5$, we use the following construction of distributions:
Let $\Kset$ denote the collection of all subsets of $[m]$ whose cardinality is $\lfloor m/2 \rfloor$. For each set $\mathcal{U} \in \Kset$, define the distribution $\mu_{\mathcal{U}}$ as
\begin{equation}
\mu_{\mathcal{U},j}=\left\{\begin{array}{l l}\frac{1}{m}+\frac{\epsy}{\lfloor m/2 \rfloor}, & j \in \mathcal{U};\\ \frac{1}{m}-\frac{\epsy}{\lceil m/2 \rceil}, & j \in [m]\setminus \mathcal{U}.
\end{array}\right.\nonumber \end{equation}
This collection of distributions can be obtained by taking the worst-case distribution $\mu^*$ given in \eqref{e:worstmu2a}, and permuting the symbols in the alphabet $[m]$. 

Let $\mu^n_{\mathcal{U}}$ be the $n$-order product of $\mu_{\mathcal{U}}$. Define the following mixture distribution,
\[
\bmu^n=\frac{1}{|\Kset|}\sum_{\mathcal{U} \in \Kset}\mu^n_\mathcal{U}.
\]
The LR $\bmu^n/\pip^n$ can be lower-bounded on $B_{n, \tau,\delta}$ :
\begin{lemma}\label{T:LEMMABMUBOUND1}
Suppose $\epsy<0.5$. The following holds for any sequence $\bfmz_1^n$ satisfying $\{\bfmZ_1^n=\bfmz_1^n\} \subseteq B_{n, \tau,\delta}$:
\[
\begin{aligned}
\log\bigl(\frac{\bmu^n}{\pip^n}(\!\bfmz_1^n\!)\bigr)\!\geq&\!-\!\frac{n^2}{2m}[\taul(\epsy)\!-\!\log(1\!+\!\taul(\epsy))(1\!+\!\tau\!-\!\delta)](1\!+\!o(1))\\&-\! \frac{n^2}{m}2\delta\log(1-2\epsy).
\end{aligned}
\]
\end{lemma}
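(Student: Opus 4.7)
The plan is to mirror the combinatorial argument used for Lemma~\ref{T:LEMMABMUBOUND2}, adapted to the regime $\epsy<0.5$ where the bi-uniform $\mu_{\mathcal{U}}$ charges both $\mathcal{U}$ and its complement. Let $n_j$ denote the empirical count of symbol $j$, let $\mathcal{S}=\{j:n_j\geq 1\}$, $d=|\mathcal{S}|$, and $n_{\mathcal{U}}=\sum_{j\in\mathcal{U}}n_j$. Since $\lfloor m/2\rfloor$ and $\lceil m/2\rceil$ equal $m/2+O(1)$, one has $m\mu_{\mathcal{U},j}=1\pm 2\epsy+O(1/m)$, so
\[
\frac{\bmu^n}{\pip^n}(\bfmz_1^n)=\frac{1}{|\Kset|}\sum_{\mathcal{U}\in\Kset}(1+2\epsy)^{n_{\mathcal{U}}}(1-2\epsy)^{n-n_{\mathcal{U}}}\bigl(1+o(1)\bigr).
\]

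The core technical step is to replace the uniform law on $\Kset$ by independent Bernoulli$(1/2)$ sampling of the indicators $X_j=\ind\{j\in\mathcal{U}\}$ for $j\in\mathcal{S}$. A Stirling expansion of the hypergeometric weight yields
\[
\binom{m-d}{k-l}\big/\binom{m}{k}=(1/2)^d\exp\!\bigl(-2(l-d/2)^2/m\bigr)(1+o(1)),
\]
and, because the likelihood-ratio weighted distribution of $|\mathcal{V}|=|\mathcal{S}\cap\mathcal{U}|$ concentrates around $d/2+O(n\epsy)$, restricting the sum to the concentration window of $|\mathcal{V}|$ loses at worst a factor $\exp(-\taul(\epsy)n^2/(2m))(1+o(1))$, using $\taul(\epsy)=4\epsy^2$ for $\epsy<0.5$. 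Inside the window the sum reduces to the Bernoulli product form $\prod_{j\in\mathcal{S}}\bigl[(1+2\epsy)^{n_j}+(1-2\epsy)^{n_j}\bigr]/2$.

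I would then evaluate this product cell by cell: $n_j=1$ contributes factor $1$, $n_j=2$ contributes $1+4\epsy^2=1+\taul(\epsy)$, and $n_j=l\geq 3$ is bounded below by $(1-2\epsy)^l$. Taking logarithms and applying the constraints defining $B_{n,\tau,\delta}$, namely $d_2\geq \tfrac12(1+\tau-\delta)n^2/m$ and $\sum_{l\geq 3}l\,d_l=n-d_1-2d_2\leq 2\delta\,n^2/m$, produces two contributions of order $n^2/m$, one equal to $\tfrac{1}{2}(1+\tau-\delta)\log(1+\taul(\epsy))$ from the pair cells and one of size $2\delta\,|\log(1-2\epsy)|$ from the heavy cells. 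Combined with the hypergeometric correction $-\taul(\epsy)n^2/(2m)(1+o(1))$ from the previous step, this assembles into the claimed inequality.

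The principal obstacle is the hypergeometric-to-Bernoulli passage. One must verify that the chosen window for $|\mathcal{V}|$ is narrow enough that the Gaussian damping is at least $\exp(-\taul(\epsy)n^2/(2m))(1+o(1))$ uniformly on it, and wide enough that it captures $1-o(1)$ of the Bernoulli sum. The quantitative input is $\sum n_j^2=d_1+4d_2+\sum_{l\geq 3}l^2 d_l=n+O(n^2/m)$ on $B_{n,\tau,\delta}$, which controls the variance of $|\mathcal{V}|$ under the likelihood-ratio weighting, and hence the concentration scale that enters both estimates.
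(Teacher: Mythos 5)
Your proposal is correct and reaches the same estimate as the paper, but it organizes the computation in a genuinely different way. The paper works with the hypergeometric average directly: it lower-bounds each $\mu_{\mathcal{U}}^n/\pip^n$ by $(1-2\epsy)^n\bigl(\tfrac{1+2\epsy}{1-2\epsy}\bigr)^{k_{\mathcal{U},1}+2k_{\mathcal{U},2}}$, groups the sum over $\mathcal{U}\in\Kset$ by $(k_1,k_2)=(|\mathcal{U}\cap\mathcal{S}_1|,|\mathcal{U}\cap\mathcal{S}_2|)$, and runs a two-dimensional Laplace argument around the maximizing $(\bar k_1,\bar k_2)$ with explicit Stirling expansions of four binomial coefficients. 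You instead pass from the uniform law on $\Kset$ to i.i.d.\ Bernoulli$(1/2)$ membership indicators, under which the averaged likelihood ratio factorizes exactly as $\prod_{j\in\mathcal{S}}\tfrac{(1+2\epsy)^{n_j}+(1-2\epsy)^{n_j}}{2}$, and you pay the conditioning cost $\exp\{-\tfrac{n^2}{2m}\taul(\epsy)(1+o(1))\}$ for enforcing $|\mathcal{U}|=\lfloor m/2\rfloor$. The two arguments are the same computation in disguise: your tilted mean $d/2+\epsy n(1+o(1))$ of $|\mathcal{V}|$ is the paper's $(\bar k_1,\bar k_2)$, and your Gaussian damping $\exp\{-2(l-d/2)^2/m\}$ is the paper's $\exp\{-s_1^2(2\epsy)^2/(2m)\}$; but your factorized form treats all multiplicities uniformly and is cleaner to assemble (the per-cell factors $1$, $1+\taul(\epsy)$, and $(1-2\epsy)^{l}$ drop out immediately). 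One correction is needed: the quantitative input you cite, $\sum_j n_j^2=n+O(n^2/m)$ on $B_{n,\tau,\delta}$, is false — the event only bounds the number of singleton and doubleton cells from below, so a single cell may carry $\Theta(n^2/m)$ observations and contribute $\Theta(n^4/m^2)\gg n^2/m$ to $\sum_j n_j^2$ — and it is also not what the hypergeometric-to-Bernoulli passage requires. What is needed is that the tilted mean of $|\mathcal{V}|$ equals $d/2+\epsy s_1+O(\#\{j:n_j\ge 2\})=d/2+\epsy n(1+o(1))$, since each cell contributes at most $1$ to the mean and there are only $O(n^2/m)=o(n)$ cells with $n_j\ge 2$, while the variance of $|\mathcal{V}|$ is trivially at most $d/4=O(n)$. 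With that substitution your window argument closes and the bound assembles exactly as you describe.
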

\begin{proof}[Proof of Lemma~\ref{T:LEMMABMUBOUND1}]
For simplicity of exposition we restrict to the case where $m$ is even. 
Define
\[\begin{aligned}
\mathcal{S}_1\!&\eqdef\!\{j\!: \!\textrm{ $j$ appears in $\bfmz_1^n$ exactly \emph{once}}\},\\  \mathcal{S}_2\!&\eqdef\!\{j\!: \!\textrm{ $j$ appears in $\bfmz_1^n$ exactly \emph{twice}}\}.
\end{aligned}
\]
Denote their cardinality by $s_1=|\mathcal{S}_1|$, $s_2=|\mathcal{S}_2|$. It follows from $\{\bfmZ_1^n=\bfmz_1^n\} \subseteq {B}_{n, \tau, \delta}$ that
\begin{equation}\label{e:s1s2bound}
n \geq s_1 \geq n-\frac{n^2}{m}(1+\tau+\delta), \quad s_2 \geq \half\frac{n^2}{m}(1+\tau-\delta).
\end{equation}
Consider any set $\mathcal{U} \in \Kset$. Let $k_{\mathcal{U},1}=|\mathcal{U} \cap \mathcal{S}_1|$, and $k_{\mathcal{U},2}=|\mathcal{U} \cap \mathcal{S}_2|$. Then
\begin{equation}
\frac{\mu^n_{\mathcal{U}}}{\pip^n}(\bfmz_1^n) \geq (1-2\epsy)^n (\frac{1+2\epsy}{1-2\epsy})^{k_{\mathcal{U},1}+2k_{\mathcal{U},2}}\nonumber.
\end{equation}
Consequently, 
\begin{equation}\label{e:averagebmu1}
\frac{\bmu^n}{\pip^n}(\bfmz_1^n)\geq G(s_1,s_2)
\end{equation}
where 
\begin{equation}
\begin{aligned}
&G(s_1,s_2)\\\eqdef& \frac{1}{|\Kset|}(1-2\epsy)^n \\&\sum_{k_1=1}^{s_1}\sum_{k_2=1}^{s_2} \bigl( (\frac{1+2\epsy}{1-2\epsy})^{k}(\frac{1+2\epsy}{1-2\epsy})^{2k_{2}}\\&\qquad\quad\quad|\{\mathcal{U} \in \Kset:k_{\mathcal{U},1}=k_1,k_{\mathcal{U},2}=k_2\}|\bigr)\\
=& \frac{1}{{m \choose  m/2}}(1-2\epsy)^n \\& \sum_{k_1=1}^{s_1}\sum_{k_2=1}^{s_2}\bigl((\frac{1+2\epsy}{1-2\epsy})^{k}(\frac{1+2\epsy}{1-2\epsy})^{2k_{2}} \\&\qquad{s_1 \choose k_1}{s_2 \choose k_2}{m-(s_1+s_2) \choose m/2-(k_1+k_2)}\bigr).\end{aligned}\label{e:approxg}
\end{equation}

The summand on the right-hand side of \eqref{e:approxg} takes its maximum value approximately when \begin{equation}k_1=\bar{k}_1 \eqdef \lceil \frac{1+2\epsy}{2}s_1\rceil,  k_2=\bar{k}_2 \eqdef \lceil  \half (1+\frac{4\epsy}{1+4\epsy^2})\rceil.\label{e:defk1k2}
\end{equation}
We apply the Laplace method to approximate the summation: Denote
\[
\begin{aligned}
y(\Delta_1, \Delta_2)=&(\frac{1+2\epsy}{1-2\epsy})^{\bar{k}_1+\Delta_1+2(\bar{k}_2\!+\!\Delta_{2})}{s_1 \choose \bar{k}_1+\Delta_1}{s_2 \choose \bar{k}_2\!+\!\Delta_2}\\&\times{m-(s_1+s_2) \choose m/2-(\bar{k}_1+\Delta_1+\bar{k}_2+\Delta_2)}/{m \choose m/2}.\end{aligned}\]
Stirling's formula gives
\begin{equation}
\begin{aligned}
&{m-(s_1+s_2) \choose \frac{m}{2}-(\bar{k}_1+\Delta_1+\bar{k}_{2}+\Delta_{2})}/{m-(s_1+s_2) \choose \frac{m}{2}-(\bar{k}_1+\bar{k}_2)}\\=&\exp\{1+O(\frac{(\Delta_1+\Delta_2)(\bar{k}_1+\bar{k}_2)}{m})+o(1)\}.
\end{aligned}\label{e:stirlinglaplaceapprox}
\end{equation}
Let \[
\begin{aligned}
y_1(\Delta_1)&=(\frac{1+2\epsy}{1-2\epsy})^{\Delta_1}{s_1 \choose \bar{k}_1+\Delta_1}/{s_1 \choose \bar{k}_1},\\ y_2(\Delta_2)&=(\frac{1+2\epsy}{1-2\epsy})^{2\Delta_{2}}{s_2 \choose \bar{k}_2+\Delta_2}/{s_2 \choose \bar{k}_2}.
\end{aligned}
\] 
Note that $y(\bar{k}_1,\bar{k}_2)$ is the largest summand. Keeping only the $\lceil\sqrt{s_1}\rceil \lceil\sqrt{s_2}\rceil$ number of terms in the summation in \eqref{e:averagebmu1} whose index $(k_1,k_2)$ is close to  $(\bar{k}_1, \bar{k}_2)$, and applying \eqref{e:stirlinglaplaceapprox}, we obtain
\begin{equation}
\begin{aligned}
\frac{\bmu^n}{\pip^n}(\bfmz_1^n)
&\!\geq\! \sum_{\Delta_1=-\lceil \sqrt{s_1} \rceil}^{\lceil \sqrt{s_1} \rceil}\sum_{\Delta_2=-\lceil \sqrt{s_2} \rceil}^{\lceil \sqrt{s_2} \rceil} y(\Delta_1, \Delta_2)\\
&\!=\!\bigl(\!\!\!\sum_{\Delta_1=-\lceil \sqrt{s_1} \rceil}^{\lceil \sqrt{s_1} \rceil}\!\!\!y_1(\Delta_1)\bigr)\bigl(\!\!\sum_{\Delta_2=-\lceil \sqrt{s_2} \rceil}^{\lceil \sqrt{s_2} \rceil}\!\!\!y_2(\Delta_2)\bigr) y(0,0)\\&\quad \times\exp\{1\!+\!O(\frac{n^{\frac{3}{2}}}{m})\}.\end{aligned}\label{e:Deltaapproximation}
\end{equation}
We first approximate $\sum_{\Delta_1=-\lceil \sqrt{s_1} \rceil}^{\lceil \sqrt{s_1} \rceil}y_1(\Delta_1)$. Note that for $\Delta_1>0$, 
\[\log(y_1(\Delta_1))=\Delta_1 \log(\frac{1+2\epsy}{1-2\epsy})+\sum_{t=1}^{\Delta_1} \log(\frac{s-\bar{k}_1-t}{\bar{k}_1+t}).\]
Approximating the above summation by integrals leads to
\[\log(y_1(\Delta_1))=- \half (\frac{1}{s_1-\bar{k}_1}+\frac{1}{\bar{k}_1})\Delta_1^2(1+o(1))+O(1).\]
Approximating the summation over $\Delta_1$ using integrals, and applying the above approximation of $y_1(\Delta_1)$ leads to 
\[\begin{aligned}
\sum_{\Delta_1=-\lceil \!\sqrt{s_1} \rceil}^{\lceil \sqrt{s_1} \rceil}\!\!\!\!\!\!y_1(\Delta_1)&=\!e^{O(1)}\!\!\!\int_{-\infty}^{\infty}\!\!e^{\!- \!\half (\frac{1}{s_1-\bar{k}_1}+\frac{1}{\bar{k}_1})\Delta_1^2}d \Delta_1\\&=\!e^{O(1)}\!\sqrt{\!\frac{(s_1\!-\!\bar{k}_1)\bar{k}_1}{s_1}}\!=\!e^{O(1)\!}\!\sqrt{s_1},\end{aligned}\]
where the last equality follows from \eqref{e:defk1k2}.
A similar approximation for the summation over $y_2$ holds:  \[\sum_{\Delta_2=-\lceil \sqrt{s_2} \rceil}^{\lceil \sqrt{s_2} \rceil}y_2(\Delta_2)=e^{O(1)}\sqrt{s_2}.\] 
Substituting these into \eqref{e:approxg} gives
\begin{equation}
\begin{aligned}
&G(s_1,s_2)\\=&e^{O(1)\!+\!O(\frac{n^{3/2}}{m})}\sqrt{s_1s_2} (1\!-\!2\epsy)^n  (\frac{1+2\epsy}{1-2\epsy})^{\bar{k}_1}(\frac{1+2\epsy}{1-2\epsy})^{2\bar{k}_{2}} \\&\times{s_1 \choose \bar{k}_1}{s_2 \choose \bar{k}_2}{m-(s_1+s_2) \choose m/2-(\bar{k}_1+\bar{k}_2)}/{m \choose m/2}.\label{e:approxg2}\end{aligned}
\end{equation}
Stirling's formula gives the following asymptotic approximations the  combinatorial terms in  \eqref{e:approxg2}:
\begin{equation}
\begin{aligned}
{{s_1} \choose \bar{k}_1}&\!=\!\frac{(1\!+\!2\epsy)^{-\bar{k}_1} (1\!-\!2\epsy)^{\bar{k}_1-{s_1}}2^{s_1}}{\sqrt{2\pi \bar{k}_1({s_1}-\bar{k}_1)/{s_1}}}(1\!+\!o(1)),\\
{{s_2} \choose \bar{k}_2}&\!=\!\frac{(1+2\epsy)^{-2\bar{k}_2} (1-2\epsy)^{2(\bar{k}_2-{s_2})}}{\sqrt{2\pi \bar{k}_2({s_2}-\bar{k}_2)/{s_2}}}\\&\quad\times (1\!+\!4\epsy^2)^{s_2}2^{s_2}(1+o(1)),\\
{m\!-\!(s_1\!+\!s_2) \choose m\!/\!2\!-\!(\bar{k}_1\!+\!\bar{k}_2)}
&\!=\!2^{m-s_1-s_2}\exp\{-\frac{s_1^2(2\epsy)^2}{2m}(1+o(1))\}\\&\quad\times \frac{\sqrt{2}}{\sqrt{\pi m}}(1+o(1)),\\
{m \choose m/2}&\!=\!\frac{2^m}{\sqrt{2\pi m}}(1+o(1)).\nonumber
\end{aligned}
\end{equation}
Substituting these approximations and the value of $\bar{k}_1$ and $\bar{k}_2$ into \eqref{e:approxg2} leads to 
\begin{equation}
\begin{aligned}
G(s_1,s_2)=& (1\!-\!2\epsy)^{n\!-\!s_1\!-\!2s_2} \\&\times \exp\{\!-\frac{s_1^2(\!2\epsy\!)^2\!}{2m}(1\!+\!o(1))\!+\!s_2\!\log(\!1+4\epsy^2\!)\}\\&\times\!\exp\{O(1)\!+\!O(\!\frac{n^{3/2\!}}{m}\!)\}.\nonumber
\end{aligned}
\end{equation}
Combining this with \eqref{e:s1s2bound}, \eqref{e:averagebmu1} gives the claim of the lemma.
\end{proof}
\subsection{Proof of \Theorem{converse}}
\begin{proof}
Consider first the case $\tau>0$. Consider any $\delta \in (0, \tau)$, and any test $\phi$ such that $J_F(\phi) \geq J_F^*(\tau)$. Applying \eqref{e:boundBPF} and Lemma~\ref{T:BNGAMMA0}, we obtain
\begin{equation}\label{e:BnprobPF}
\lim_{n \rightarrow \infty}-\frac{m}{n^2}\log\Prob_\pip(\{\phi_n=0\} \cap B_{n, \tau,\delta}) = J_F^*(\tau-\delta).
\end{equation}
When $\epsy\geq 0.5$, we apply \eqref{e:PMboundPForigin}, \eqref{e:BnprobPF}, and Lemma~\ref{T:LEMMABMUBOUND2} to obtain 
\begin{equation}\label{e:JMbounddelta2}
\begin{aligned}
&J_M(\phi) \\\leq& \half [\taul(\epsy)-\log(1+\taul(\epsy))(1+\tau-\delta)]+J_F^*(\tau-\delta)\\
=&J_M^*(\tau-\delta)+r_2(\delta).
\end{aligned}
\end{equation}
where $r_2$ again vanishes as $\delta \rightarrow 0$, 
\[\begin{aligned}
r_2(\delta)=&\half[-\delta \log(1+\kappa(\epsy))\\
&\quad\!+\!(1+\tau)\log(1-\frac{\delta}{1+\tau})-\delta \log(1+\tau-\delta)+\delta].
\end{aligned}\]
We have used the following explicit expressions of $J_F^*$ and $J_M^*$:
\[\begin{aligned}
J_F^*(\tau)&=\half[-\tau+(1+\tau)\log(1+\tau)],\!\!\! \\
J_M^*(\tau)&=\half[\taul(\epsy)-\tau+(1+\tau)\log(\frac{1+\tau}{1+\taul(\epsy)})].
\end{aligned}
\]
Since  \eqref{e:JMbounddelta2} holds for any $\delta\!>\!0$ and $J_M^*(\tau)$ is continuous, we conclude $J_M(\phi) \!\leq\! J_M^*(\tau)$.

When $\epsy< 0.5$, we apply \eqref{e:PMboundPForigin}, \eqref{e:BnprobPF}, and Lemma~\ref{T:LEMMABMUBOUND1} to obtain 
\begin{equation}\label{e:JMbounddelta1}
\begin{aligned}
&J_M(\phi) \\
\leq&  \half[\taul(\epsy)\!-\!\log(1\!+\!\taul(\epsy))(1\!+\!\tau\!-\!\delta)+4\delta\log(1-2\epsy)\\&+J_F^*(\tau-\delta)\\
=&  J_M^*(\tau-\delta)+r_1(\delta).
\end{aligned}\vspace{-0.1cm}
\end{equation}
where 
\[\begin{aligned}
r_1(\delta)\!=&\!\half[-\delta \!\log(1+\kappa(\epsy)\!)+(\!1+\tau\!)\!\log(1-\frac{\delta}{\!1\!+\!\tau\!})\\
&\quad-\delta\! \log(1+\tau-\delta)+\delta+4\delta\!\log(1-2\epsy)].\vspace{-0.1cm}\end{aligned}\]
Since the inequality \eqref{e:JMbounddelta1} holds for any $\delta>0$, $J_M^*(\tau)$ is continuous in $\tau$, and $r_1(\delta) \rightarrow 0$ as $\delta \rightarrow 0$, we conclude that $J_M(\phi) \leq J_M^*(\tau)$.

The proof for the case where $\tau=0$ is exactly the same as that for the case $\tau>0$, except \eqref{e:limitBn} is used in place of \eqref{e:Bnprob}. We omit the details.

\end{proof}

\section{Proof of Lemma~\ref{T:PEARSONTAUBD}, Lemma~\ref{T:PEARSONA} and Lemma~\ref{T:PEARSONA2} Used in the Proof of  \Theorem{pearsonerrorexponent}}\label{apx:lemmapearson}
\begin{proof}[Proof of Lemma~\ref{T:PEARSONTAUBD}]
Applying \Lemma{separableexpectation}  to the distribution $\mu^* \in \mP$ given in \eqref{e:worstmu1} and \eqref{e:worstmu2a} gives $
\Expect_{\mu^*}[\pearson]=\Expect_{\pip}[\pearson]+\frac{n^2}{m}\kappa(\epsy)(1+o(1))$.
It follows from Chebyshev's inequality that for $\tau_n > \Expect_{\pip}[\pearson]+\frac{n^2}{m}\taul(\epsy)$,
\[
\Prob_{\mu^*}\{\ptest_n(\bfmZ_1^n)=1\}\leq \frac{\Var_{\mu^*}[\pearson]}{(\tau_n-\Expect_{\pip}[\pearson]-\frac{n^2}{m}\taul(\epsy))^2}.
\]
Thus, in order for $\lim_{n \rightarrow \infty}\Prob_{\mu^*}\{\ptest_n(\bfmZ_1^n)=1\}=1$ to hold, we must have 
\[\begin{aligned}
(\tau_n-\Expect_{\pip}[\pearson]-\frac{n^2}{m}\taul(\epsy))^2 &\leq \Var_{\mu^*}[\pearson](1+o(1))\\&=2\frac{n^2}{m}(1+\taul(\epsy))(1+o(1)).
\end{aligned}\]
where the last equality follows from \Lemma{separablevariance}.
This leads to the claim of Lemma~\ref{T:PEARSONTAUBD}.
\end{proof}

\begin{proof}[Proof of Lemma~\ref{T:PEARSONA}]
Consider the statistic
\[\bar{\pearson}=\pearson-\frac{n}{m}\frac{(n\Gamma^n_1-n\pip_1)^2}{n\pip_1}=\pearson-2\frac{n^2}{m}\taul(\epsy)+O(\frac{n}{\sqrt{m}}).\]
The conditional distribution of $\bar{\pearson}$ in the event $A$ under $\pip$ is the same as the distribution of $S^{\sf P}_{n'}$ under $\pip'$, where the number of samples is $n'=n-\lfloor \frac{n\sqrt{2\taul(\epsy)}}{\sqrt{m}}\rfloor$ and $\pip'$ is the uniform distribution over $[m-1]$. 
It then follows from Lemma~\ref{t:separableexpectation} and \Lemma{separablevariance}  that
\[
\begin{aligned}
\Expect_{\pip}[\bar{\pearson}|A]&=\Expect_{\pip'}[S^{\sf P}_{n'}]=n-\lfloor  \frac{n\sqrt{2\taul(\epsy)}}{\sqrt{m}}\rfloor+O(\frac{n^2}{m}),\\
\Var_{\pip}[\bar{\pearson}|A]&=\Var_{\pip'}[S^{\sf P}_{n'}]=2\frac{n^2}{m}(1+o(1)).\end{aligned}\]
It then follows from Chebyshev's inequality, \Lemma{separableexpectation} and \Lemma{separablevariance} that for large enough $n$,
\begin{equation}
\begin{aligned}
&\Prob_\pip \{\pearson \leq \Expect_{\pip}[\pearson]+\frac{n^2}{m}\taul(\epsy)+2\frac{n}{\sqrt{m}}|A_n\}\\
&=\Prob_\pip \{\bar{\pearson} \!+\! 2\frac{n^2}{m}\taul(\epsy) \leq n\!+\!\frac{n^2}{m}\taul(\epsy)\!+\!2\frac{n}{\sqrt{m}}\!+\!O(\frac{n}{\sqrt{m}})|A_n\}\\
&=\Prob_\pip \{\bar{\pearson} \leq \Expect_{\pip}[\bar{\pearson}|A]-\frac{n^2}{m}\taul(\epsy)+O(\frac{n}{\sqrt{m}})|A_n\}\\
&\leq \frac{2\frac{n^2}{m}(1+O(\frac{n}{\sqrt{m}}))}{\bigl(\frac{n^2}{m}\taul(\epsy)+O(\frac{n}{\sqrt{m}})\bigr)^2}=O(\frac{m}{n^2}).
\end{aligned}\nonumber
\end{equation}\vspace{-0.1cm}
\end{proof}
\begin{proof}[Proof of Lemma~\ref{T:PEARSONA2}]
A simple combinatorial argument gives 
\begin{equation}
\Prob_\pip\{A_n\}={n \choose \lfloor {n\frac{\sqrt{2\taul(\epsy)}}{\sqrt{m}}}\rfloor} \pip_1^{\lfloor {n\frac{\sqrt{2\taul(\epsy)}}{\sqrt{m}}}\rfloor}(1-\pip_1)^{n-\lfloor {n\frac{\sqrt{2\taul(\epsy)}}{\sqrt{m}}}\rfloor}.\nonumber
\end{equation}
Applying Stirling's formula and substituting $\pip_1=\frac{1}{m}$ leads to
\begin{equation}
\Prob_\pip\{A_n\}=\exp\{-\half \frac{n\sqrt{2\taul(\epsy)}}{\sqrt{m}} \log(m)(1+o(1))\}(1+o(1)).\nonumber\end{equation}
The following approximation to the exponent the above equation follows from 
 $m=o(\frac{n^2}{\log(n)^2})$ and $m=o(n^2)$:
\[\frac{n\sqrt{2\taul(\epsy)}}{\sqrt{m}} \log(m)=\frac{n\sqrt{2\taul(\epsy)}}{\sqrt{m}}o(2\log(n))=o(\frac{n^2}{m}).\] This leads to the claim of this lemma.
\end{proof}

\end{document}